\documentclass[12pt]{amsart}
\usepackage{SSdefn}
\usepackage[lite,nobysame,alphabetic]{amsrefs}
\usepackage{verbatim}

\newcommand{\Zar}{\mathrm{Zar}}

\renewcommand{\ff}{\mathbf f}

\newcommand{\FI}{\mathbf{FI}}

\DeclareMathOperator{\gin}{gin}

\DeclareMathOperator{\HN}{\rH\rN}

\newtheorem*{theorem*}{Theorem}

\title[Generalizations of Stillman's conjecture via twisted commutative algebras]{Generalizations of Stillman's conjecture via \\ twisted commutative algebra}

\author{Daniel Erman}
\address{Department of Mathematics, University of Wisconsin, Madison, WI}
\email{\href{mailto:derman@math.wisc.edu}{derman@math.wisc.edu}}
\urladdr{\url{http://math.wisc.edu/~derman/}}

\author{Steven V Sam}
\address{Department of Mathematics, University of Wisconsin, Madison, WI}
\email{\href{mailto:svs@math.wisc.edu}{svs@math.wisc.edu}}
\urladdr{\url{http://math.wisc.edu/~svs/}}

\author{Andrew Snowden}
\address{Department of Mathematics, University of Michigan, Ann Arbor, MI}
\email{\href{mailto:asnowden@umich.edu}{asnowden@umich.edu}}
\urladdr{\url{http://www-personal.umich.edu/~asnowden/}}

\thanks{DE was partially supported by NSF DMS-1302057 and NSF DMS-1601619. SS was partially supported by NSF DMS-1500069. AS was supported by NSF DMS-1303082 and DMS-1453893 and a Sloan Fellowship.}

\date{April 24, 2018}

\subjclass[2010]{%
14F05, %   	Sheaves, derived categories of sheaves and related constructions
13D02.%   	Syzygies, resolutions, complexes
}

\begin{document}

\begin{abstract}
Combining recent results on noetherianity of twisted commutative algebras by Draisma and the resolution of Stillman's conjecture by Ananyan--Hochster, we prove a 
broad generalization of Stillman's conjecture. Our theorem yields an array of boundedness results in commutative algebra that only depend on the degrees of the generators of an ideal, and not the number of variables in the ambient polynomial ring.
\end{abstract}

\maketitle

\section{Introduction}

The introduction of noetherianity conditions in commutative algebra streamlined and generalized boundedness results in invariant theory. We revisit this theme: using Draisma's recent noetherianity result for twisted commutative algebras, and the resolution of Stillman's conjecture by Ananyan--Hochster, we prove a boundedness result for a large class of ideal invariants. This can be seen as a far reaching generalization of Stillman's conjecture.

As preparation for this result, we also establish a number of technical statements about certain topological spaces with group actions relevant to the study of ideal invariants. 

\subsection{Ideal invariants}

Fix a field $\bk$. An {\bf ideal invariant} (over $\bk$) is a rule $\nu$ that associates a quantity $\nu(I) \in \bZ \cup \{\infty\}$ to every homogeneous ideal $I$ in every standard-graded polynomial ring $A=\bk[x_1, \ldots, x_n]$, such that $\nu(I)$ only depends on the pair $(A, I)$ up to isomorphism. There are countless examples of ideal invariants: degree, projective dimension, regularity, the $(i,j)$ Betti number, etc. 

Let $\bd=(d_1, \ldots, d_r)$ be a tuple of positive integers. We say that an ideal $I$ is {\bf type $\bd$} if it is generated by $f_1, \ldots, f_r$ where $f_i$ is homogeneous of degree $d_i$. We say that an ideal invariant $\nu$ is {\bf bounded in degree $\bd$} if there exists $B \in \bZ$ such that for every type $\bd$ ideal $I \subset A$ we have $\nu(I) \le B$ or $\nu(I)=\infty$. We say that $\nu$ is {\bf degreewise bounded} if it is bounded in degree $\bd$ for all $\bd$. The main point of this definition is that the bound is independent of the number of variables.

There are two ``niceness'' conditions we require on our ideal invariants. We say that $\nu$ is {\bf cone-stable} if $\nu(I[x])=\nu(I)$ for all $(A,I)$, that is, adjoining a new variable does not affect the invariant. We say that $\nu$ is {\bf weakly upper semi-continuous} if the following holds: given a polynomial ring $A$, a variety $S$ over $\bk$, and a homogeneous ideal sheaf $\cI$ of $\cA=\cO_S \otimes_{\bk} A$ such that $\cA/\cI$ is $\cO_S$-flat, the map $s \mapsto \nu(\cI_s)$ is upper semi-continuous as $s$ varies over the geometric points of $S$; that is, for each $n$, the locus of geometric points $\{ s \mid \nu(\cI_s) \ge n\}$ is Zariski-closed. (Here $\cI_s$ denotes the fiber of $\cI$ at $s$, which is an ideal of $A$ by the flatness assumption.)

Stillman's conjecture is exactly the statement that the ideal invariant ``projective dimension'' (which is cone-stable and weakly upper semi-continuous) is degreewise bounded. We prove the following generalization:

\begin{theorem}\label{thm:invariants}
Any ideal invariant that is cone-stable and weakly upper semi-continuous is degreewise bounded.
\end{theorem}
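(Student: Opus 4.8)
The plan is to recast the problem in terms of the limit space $\Xd$ of all type-$\bd$ tuples in infinitely many variables, where Draisma's noetherianity theorem applies, and then to run a stabilization argument for the nested loci cut out by $\nu$. Fix $\bd=(d_1,\dots,d_r)$. Recall that $\Xdn$ is the affine space of $r$-tuples $(f_1,\dots,f_r)$ with $f_i\in\bk[x_1,\dots,x_n]$ homogeneous of degree $d_i$; it carries a $\mathrm{GL}_n$-action and a tautological ideal sheaf $\cI_n\subseteq\cA_n:=\cO_{\Xdn}\otimes_\bk\bk[x_1,\dots,x_n]$. The inclusion $\bk[x_1,\dots,x_n]\hookrightarrow\bk[x_1,\dots,x_{n+1}]$ induces a closed immersion $\Xdn\hookrightarrow\Xd_{n+1}$ along which the fiber ideal of a point $p$ goes to $I_p[x_{n+1}]$; by cone-stability the function $p\mapsto\nu(I_p)$ is constant along these transition maps, so it is really a function on the union $\bigcup_n\Xdn$, which sits densely inside the limit space $\Xd$. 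By Draisma's theorem the coordinate ring of $\Xd$ is a noetherian twisted commutative algebra, so $\Xd$ is a noetherian $\mathrm{GL}_\infty$-space: every descending chain of $\mathrm{GL}_\infty$-stable closed subsets stabilizes.

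The heart of the argument is to produce a \emph{finite}, $\mathrm{GL}_\infty$-equivariant stratification $\Xd=\bigsqcup_\alpha S_\alpha$ into locally closed pieces, each of which is ``defined at a finite level'': $S_\alpha$ is the extension to $\Xd$ of a $\mathrm{GL}_{n_\alpha}$-stable locally closed subvariety $T_\alpha\subseteq\Xd_{n_\alpha}$ over which the tautological quotient $\cA_{n_\alpha}/\cI_{n_\alpha}$ is $\cO_{T_\alpha}$-flat. Finiteness of the stratification should come from $\mathrm{GL}_\infty$-equivariant noetherian induction on $\Xd$ (via Draisma) combined with generic flatness; the point that each stratum is controlled at a finite level is where the Ananyan--Hochster ``small subalgebras'' technology must enter — a generic type-$\bd$ tuple in infinitely many variables lies in a subalgebra generated by a bounded number of forms, so the family over the corresponding stratum genuinely involves only finitely many variables. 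I expect this construction to be the main obstacle, and to be precisely what the preliminary results about topological spaces with group actions are designed to supply.

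Granting this, the conclusion is quick. For a fixed stratum, weak upper semi-continuity applied to the flat family $\cA_{n_\alpha}/\cI_{n_\alpha}$ over $T_\alpha$ shows that $W_{\alpha,m}:=\{s\in T_\alpha : \nu((\cI_{n_\alpha})_s)\ge m\}$ is closed in $T_\alpha$. The $W_{\alpha,m}$ decrease with $m$ and $T_\alpha$ is a noetherian topological space, so the chain stabilizes at some integer $m_\alpha$ with $W_{\alpha,m_\alpha}=\bigcap_m W_{\alpha,m}$; hence every $s\in T_\alpha$ with $\nu((\cI_{n_\alpha})_s)\ne\infty$ satisfies $\nu((\cI_{n_\alpha})_s)\le m_\alpha-1$. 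Set $B=\max_\alpha m_\alpha$. Now let $I\subseteq\bk[x_1,\dots,x_n]$ be an arbitrary type-$\bd$ ideal. A choice of degree-$\bd$ generators gives a point of $\Xdn\subseteq\Xd$ lying in some stratum $S_\alpha$; cone-stability and the isomorphism-invariance of $\nu$ identify $\nu(I)$ with the value of $\nu$ on the corresponding fiber of the finite-level family over $T_\alpha$ (after a linear change of coordinates and adjoining or forgetting cone variables to match levels). Therefore $\nu(I)\le B$ or $\nu(I)=\infty$, which is exactly boundedness in degree $\bd$; since $\bd$ was arbitrary, $\nu$ is degreewise bounded. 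Aside from the stratification itself, the only delicate bookkeeping will be checking that the identification ``$\nu$ at a finite-level fiber $=$ $\nu$ along a stratum of $\Xd$'' is simultaneously compatible with the $\mathrm{GL}$-actions and with adjoining cone variables, so that weak upper semi-continuity and cone-stability can both be invoked for the same family.
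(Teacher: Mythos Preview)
Your overall architecture is exactly the paper's: pass to the limit space $X_{\bd}$, use Draisma for equivariant noetherianity, produce a finite flat stratification, and run a descending-chain argument stratum by stratum. The endgame paragraph is fine and matches \S\ref{sec:cone stable} essentially verbatim.

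The gap is in the stratification. Your proposed mechanism --- ``$\GL_\infty$-equivariant noetherian induction via Draisma combined with generic flatness'' --- does not obviously produce a \emph{finite} stratification, because the universal quotient $\cA/\cI$ has infinitely many graded pieces, and generic flatness for each piece separately gives a countable intersection of dense opens, not a single one. Noetherian induction on $\GL$-closed subsets does not repair this without some extra input telling you that only finitely many ``Hilbert-type'' behaviors occur. The paper's route is different and more explicit: stratify $Y_{\bd}$ by \emph{Hilbert numerator} (Proposition~\ref{prop:strat}). Flatness on each stratum is then automatic, since a family with constant Hilbert function is flat graded-piece-by-graded-piece (Proposition~\ref{prop:flatstrat}). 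Finiteness of the strata is Theorem~\ref{thm:hilbfin}, and \emph{this} is where Ananyan--Hochster enters: Stillman's bound on projective dimension, plus Caviglia's equivalence, bounds regularity; together with Bayer--Stillman this forces the revlex $\gin$ of any type-$\bd$ ideal to lie in a fixed finite set (Theorem~\ref{thm:ginfin}), hence only finitely many Hilbert numerators occur. So Ananyan--Hochster is used to bound the number of strata, not (as you suggest) to guarantee that each stratum is ``controlled at a finite level'' via small subalgebras.

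Two smaller corrections. First, Draisma's theorem is a statement of \emph{topological} $\GL$-noetherianity of $\Spec(R)$; it does not assert that the coordinate ring is a noetherian tca, and the paper needs the comparison results of \S\ref{s:polyvar} to transfer Draisma's result from the inverse-limit space $\wh{X}$ to the ind-scheme $X_{\bd}$ you are actually using. Second, $\bigcup_n X_{\bd,n}$ does not ``sit densely'' in $X_{\bd}$; at the level of closed points it \emph{is} $X_{\bd}$ (the ind-scheme), whereas the larger space is $\wh{X}_{\bd}$.
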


\subsection{The space of ideals}

We introduce a topological space $Y_{\bd}$ that parametrizes isomorphism classes of type $\bd$ ideals in the infinite polynomial ring $\bk[x_1,x_2,\ldots]$. We construct $Y_{\bd}$ as a quotient of an infinite dimensional variety $X_{\bd}$ that parametrizes generating sets $(f_1,\dots,f_r)$ of type $\bd$ ideals. Theorem~\ref{thm:invariants} is deduced from two results about $Y_{\bd}$.

\begin{theorem} \label{thm:noeth}
The space $Y_{\bd}$ is noetherian.
\end{theorem}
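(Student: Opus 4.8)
The plan is to derive Theorem~\ref{thm:noeth} from Draisma's topological noetherianity theorem via the soft remark that a continuous surjective image of a $\GL$-noetherian space is noetherian; the real content is Draisma's theorem together with the (already established) presentation of $Y_{\bd}$ as such a quotient of $\Xd$.

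First I would record the input from Draisma. Recall that $\Xd$ is the infinite-dimensional variety of $r$-tuples of homogeneous polynomials of degrees $d_1,\dots,d_r$ in $\bk[x_1,x_2,\dots]$; it carries the action of $\GL=\bigcup_n\GL_n$ by linear changes of the variables, and it is the space attached to the polynomial functor $V\mapsto\bigoplus_{i=1}^r\operatorname{Sym}^{d_i}(V)$. Draisma's theorem then asserts that $\Xd$ is topologically $\GL$-noetherian: every descending chain $C_1\supseteq C_2\supseteq\cdots$ of $\GL$-stable closed subsets of $\Xd$ is eventually constant.

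Next, let $q\colon\Xd\to Y_{\bd}$ be the quotient map, so $q$ is continuous and surjective and $Y_{\bd}$ carries the quotient topology. The key observation is that for any closed $Z\subseteq Y_{\bd}$ the preimage $q^{-1}(Z)$ is a $\GL$-stable closed subset of $\Xd$: closedness is just continuity, and $\GL$-stability holds because each fiber of $q$ is a full isomorphism class of type $\bd$ generating sets, while each $\gamma\in\GL$ acts by an automorphism of $\bk[x_1,x_2,\dots]$, so composing such an automorphism with an isomorphism of pairs $(\bk[x_1,x_2,\dots],I)\cong(\bk[x_1,x_2,\dots],J)$ shows these classes are $\GL$-stable. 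Moreover $Z\mapsto q^{-1}(Z)$ is inclusion-preserving and, since $q$ is surjective (so $q(q^{-1}(Z))=Z$), injective on closed sets. Hence a descending chain $Z_1\supseteq Z_2\supseteq\cdots$ of closed subsets of $Y_{\bd}$ pulls back to a descending chain of $\GL$-stable closed subsets of $\Xd$, which stabilizes by Draisma's theorem, and injectivity forces the $Z_i$ to stabilize. Thus $Y_{\bd}$ is noetherian.

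The main obstacle is not in this deduction, which is formal point-set topology, but upstream: the genuine difficulty is Draisma's theorem itself, and the point to take care over in writing this up is to confirm that the topology and $\GL$-action on $\Xd$ fixed in the preceding sections are exactly those to which Draisma's theorem applies, so that the $\GL$-stable closed chains appearing above are precisely the objects it controls.
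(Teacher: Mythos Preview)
Your argument is correct and is essentially the paper's own proof: pull back a descending chain in $Y_{\bd}$ along the surjection $\pi\colon X_{\bd}^{\circ}\to Y_{\bd}$ to a chain of $\GL$-stable closed sets, invoke $\GL$-noetherianity upstairs, and use surjectivity to conclude. Your final caveat is exactly right and is what \S\ref{s:polyvar} handles: Draisma's theorem is stated for the inverse-limit space $\wh{X}$, and the paper's Corollary~\ref{cor:conjtop} (via Propositions~\ref{prop:top1} and~\ref{prop:top2}) is the transfer to the direct-limit space $X_{\bd}$ that you need.
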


\begin{theorem} \label{thm:strat}
The space $Y_{\bd}$ admits a finite stratification $\{Y^{\lambda}_{\bd}\}_{\lambda \in \Lambda}$ such that the universal quotient ring is flat over each stratum $($see \S \ref{ss:strat} for the precise meaning of this$)$.
\end{theorem}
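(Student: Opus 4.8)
The plan is to establish the stratification by \emph{generic flatness together with Noetherian induction}. I would first prove the following ``generic flatness'' statement: for every closed subset $Z \subseteq Y_\bd$, the restriction to $Z$ of the universal quotient ring is flat over a dense open subset of $Z$. Granting this, the theorem follows by Noetherian induction: apply the statement to $Y_\bd$ itself to get a flattening dense open $V_0$; the complement $Y_\bd \setminus V_0$ is a proper closed subset, hence a Noetherian space by Theorem~\ref{thm:noeth}, so by induction it already admits a finite flattening stratification, and adjoining $V_0$ produces the desired finite stratification of $Y_\bd$. (One must check the minor formal points: that the pieces produced are locally closed and that flatness behaves correctly under the operations used; these follow from the setup of \S\ref{ss:strat}.)

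For the generic flatness statement I would use the Ananyan--Hochster theorem to cut the problem down to finitely many variables. By passing to irreducible components it suffices to treat an irreducible closed $Z$; let $\eta$ be its generic point, with $K = \kappa(\eta)$, so that the fiber of the universal family at $\eta$ is a type $\bd$ ideal $I = (f_1, \dots, f_r) \subseteq K[x_1, x_2, \dots]$. The two obstacles to classical generic flatness are that the coefficient ring of $Z$ need not be Noetherian and that there are infinitely many variables $x_i$; Ananyan--Hochster dispenses with both. By their theorem there are $N = N(\bd)$ forms $g_1, \dots, g_N \in K[x_1, x_2, \dots]$, of degrees bounded in terms of $\bd$, forming a regular sequence with $f_i \in K[g_1, \dots, g_N]$ for every $i$. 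I would then pass to the preimage of $Z$ in $X_\bd$, where the universal tuple $(f_1, \dots, f_r)$ is given by honest coefficients; since the $g_j$ and the expressions $f_i = P_i(g_1, \dots, g_N)$ involve only finitely much data over $K$, all of this spreads out over a dense open of the preimage and, in fact, is defined over a finitely generated subring of its coefficient ring. Over a suitable dense open one then has, fiberwise and hence over the base: (i) the $g_j$ remain a regular sequence, so they are algebraically independent, $\cO[g_1, \dots, g_N]$ is an honest polynomial ring over the base, and $\cO \otimes \bk[x_1, x_2, \dots]$ is flat over it --- this reduces, after splitting off the variables not occurring in the $g_j$, to miracle flatness in a polynomial ring in finitely many variables; and (ii) the finitely generated algebra $\cO[g_1, \dots, g_N]/(f_1, \dots, f_r)$ is flat over the base, by classical generic flatness over the finitely generated subring. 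Since the universal quotient ring is the base change of $\cO \otimes \bk[x_1, x_2, \dots]$ along $\cO[g_1, \dots, g_N] \to \cO[g_1, \dots, g_N]/(f_1, \dots, f_r)$, transitivity of flatness gives the claim over that open; finally I would push the flattening open back down to a dense open of $Z$, using that the universal quotient on $Y_\bd$ is the descent of its counterpart on $X_\bd$ and that its flattening locus, being canonical, is stable under the group $G$ with $Y_\bd = X_\bd / G$.

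The main obstacle is this second step --- organizing the Ananyan--Hochster forms $g_j$ into a genuine family and verifying that the loci one needs to intersect (where the $g_j$ remain a regular sequence, where $\cO[g_1, \dots, g_N]/(f_1, \dots, f_r)$ becomes flat over the base) are each dense and open, so that a single dense open works for everything. This is also where care is needed in moving between $X_\bd$ and $Y_\bd$ and in descending constructions to a finitely generated coefficient subring so that classical generic flatness is applicable. Once that is in place, the remainder is bookkeeping with miracle flatness, classical generic flatness, transitivity of flatness, and the Noetherianity of Theorem~\ref{thm:noeth}.
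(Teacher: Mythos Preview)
Your approach is plausible but takes a genuinely different route from the paper. The paper does \emph{not} use Noetherian induction or generic flatness at all. Instead it proceeds as follows: from Ananyan--Hochster it extracts a uniform bound on projective dimension, hence (via Caviglia) on regularity, hence (via Bayer--Stillman) on the number of variables and the degrees needed to define the revlex generic initial ideal of any type~$\bd$ ideal. This yields a \emph{finite} set of possible Hilbert numerators $\{H_\lambda\}_{\lambda\in\Lambda}$, and the strata $Y_\bd^\lambda$ are simply the loci with a fixed Hilbert numerator. Local closedness comes from an elementary analysis of a partial order on polynomials, and flatness on each stratum is immediate because the Hilbert series of the fibers is \emph{constant} there, so each graded piece of $\cB_{\bd,n}$ has constant fiber dimension and is locally free.

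The tradeoffs: the paper's argument is explicit (the strata are indexed by Hilbert numerators) and, crucially, is \emph{independent of Theorem~\ref{thm:noeth}} and hence of Draisma's theorem; by contrast your Noetherian induction consumes Theorem~\ref{thm:noeth} as an input, entangling the two results. Your route invokes the ``small subalgebras'' formulation of Ananyan--Hochster and classical generic/miracle flatness, which is conceptually natural, but it incurs real overhead you are right to flag: $Y_\bd$ is only a topological space, so ``generic point with residue field $\kappa(\eta)$'' must be interpreted on the cover, and the cover $X_\bd$ is an ind-scheme, so spreading out the $g_j$ and producing a \emph{single} dense open that works uniformly in $n$ (and is $\GL$-stable so as to descend to $Y_\bd$) requires care. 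None of this is fatal, but the paper's constant-Hilbert-numerator stratification sidesteps all of it.
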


The primary input into the proof of Theorem~\ref{thm:noeth} is Draisma's theorem \cite{draisma} on noetherianity of polynomial representations, while the primary input into the proof of Theorem~\ref{thm:strat} is the resolution of Stillman's conjecture by Ananyan--Hochster \cite{ananyan-hochster}.

We now sketch the proof of Theorem~\ref{thm:invariants}. Suppose $\nu$ is a cone-stable weakly upper semi-continuous ideal invariant. By cone-stability, $\nu$ defines a function on $Y_{\bd}$. Let $Z_n \subset Y_{\bd}$ be the locus where $\nu \ge n$. These loci form a descending chain. By weak upper semi-continuity and Theorem~\ref{thm:strat}, $Z_n \cap Y^{\lambda}_{\bd}$ is closed in $Y^{\lambda}$. By Theorem~\ref{thm:noeth}, $Y^{\lambda}_{\bd}$ is noetherian, and so $Z_{\bullet} \cap Y^{\lambda}_{\bd}$ stabilizes for each $\lambda$. Thus $Z_{\bullet}$ stabilizes, which shows that $\nu$ is bounded.

\begin{remark}
As should be clear, our proof of Theorem~\ref{thm:invariants} is a consequence of Stillman's conjecture.  A direct proof of Stillman's conjecture using~\cite{draisma} appears in~\cite{ess-stillman}.  See also Remark~\ref{remark:strat compare} for a comparison of the stratification in Theorem~\ref{thm:strat} and the one appearing in~\cite[Theorem~5.13]{ess-stillman}.
\end{remark}

Surprisingly, one can go even further. In \S\ref{ss:improve}, we define a space $Y_{\le d}$ which parametrizes isomorphism classes of {\it all} finitely generated homogeneous ideals of $\bk[x_1,x_2,\dots]$ generated in degrees $\le d$, with no restriction on the number of generators.

\begin{theorem} \label{thm:noeth 2}
The space $Y_{\le d}$ is noetherian.
\end{theorem}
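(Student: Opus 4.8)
The plan is to reduce the statement to Theorem~\ref{thm:noeth}. The space $Y_{\le d}$ parametrizes isomorphism classes of finitely generated homogeneous ideals of $S = \bk[x_1, x_2, \ldots]$ generated in degrees $\le d$, with no bound on the number of generators. The key observation is that, by Stillman's conjecture (which is available via Theorem~\ref{thm:strat} or the cited resolution \cite{ananyan-hochster}), there is a function $N = N(d)$ such that any ideal generated in degrees $\le d$ has projective dimension $\le N$; in particular (e.g. by a result on the relationship between projective dimension and number of generators of ideals minimally generated in bounded degree, or more directly via "small subalgebras"), such an ideal is contained in an ideal that can be generated by at most $r = r(d)$ forms of degrees $\le d$, and in fact the ideals generated by $\le r$ forms of degrees $\le d$ already account for all of $Y_{\le d}$ as a set. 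So I would first show that the natural map
\[
\coprod_{\bd} Y_{\bd} \longrightarrow Y_{\le d}
\]
(the coproduct over tuples $\bd = (d_1, \ldots, d_k)$ with all $d_i \le d$ and $k \le r$) is surjective and continuous.

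The next step is to make this map an honest quotient map of topological spaces, or at least a continuous surjection from a noetherian space. Since the index set of tuples $\bd$ with $d_i \le d$ and length $\le r(d)$ is \emph{finite}, the source $\coprod_\bd Y_\bd$ is a finite disjoint union of noetherian spaces (each $Y_\bd$ is noetherian by Theorem~\ref{thm:noeth}), hence noetherian. A continuous surjective image of a noetherian space is noetherian: given a descending chain of closed sets $Z_1 \supseteq Z_2 \supseteq \cdots$ in $Y_{\le d}$, pull back to get a descending chain of closed sets in the noetherian source, which stabilizes; since the map is surjective, $Z_n = \phi(\phi^{-1}(Z_n))$, so the original chain stabilizes too. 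Thus $Y_{\le d}$ is noetherian. This same argument structure is presumably how $Y_\bd$ itself is shown noetherian from $X_\bd$, so the formalism should already be in place.

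The main obstacle is the set-theoretic surjectivity input: one must know that every finitely generated homogeneous ideal generated in degrees $\le d$ is \emph{isomorphic} (as a pair $(A,I)$, allowing change of the polynomial ring) to one generated by a bounded number $r(d)$ of forms of degrees $\le d$. This is exactly the kind of statement that Ananyan--Hochster's "small subalgebra" technology provides, or that follows from Stillman's conjecture together with a bound relating projective dimension to minimal number of generators in fixed degree; one should cite this carefully. A secondary point to check is that the map $\phi$ is continuous for the chosen topologies on $Y_\bd$ and $Y_{\le d}$ — this should be immediate from the construction of $Y_{\le d}$ in \S\ref{ss:improve} as a quotient of an $X$-type space compatible with the $X_\bd$'s, but it needs to be stated. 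Once surjectivity and continuity are in hand, noetherianity is a soft consequence as above.
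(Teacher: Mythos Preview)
Your approach has a genuine gap: the surjectivity input you need is simply false. There is no bound $r(d)$ on the minimal number of generators of an ideal generated in degrees at most $d$. Already for $d=1$, the ideals $(x_1,\dots,x_c)\subset A$ are pairwise non-isomorphic points of $Y_{\le 1}$ and require exactly $c$ generators. Stillman's conjecture and the Ananyan--Hochster small-subalgebra results all take the degree tuple $\bd$ (in particular, the number $r$ of generators) as input; they do not and cannot produce a bound on $r$ from $d$ alone. So the map $\coprod_{\bd}Y_{\bd}\to Y_{\le d}$, with the coproduct restricted to tuples of length $\le r(d)$, is never surjective, and the reduction to Theorem~\ref{thm:noeth} collapses. (The remark after Proposition~\ref{prop:invar2} in the paper records exactly this failure for $Y_{\le 1}$.)

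The paper's proof avoids Stillman entirely and instead applies Draisma's theorem directly to a larger space. The point is that
\[
X_{\le d}=\bigoplus_{i=1}^{d}\bigl(\Sym^{i}(\bk^{\infty})\otimes\bk^{\infty}\bigr)
\]
already parametrizes ideals with arbitrarily many generators: an element is a finite-rank map $\bk^{\infty}\to\bigoplus_i\Sym^{i}(\bk^{\infty})$, and the ideal is generated by its image. There is a $\GL\times\GL$ action (one factor on the variables, one on the generator index); under the \emph{diagonal} $\GL$, each summand $\Sym^{i}(V)\otimes V$ is a finite-length polynomial functor, so Draisma's theorem yields that $X_{\le d}$ is topologically $\GL$-noetherian. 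Since $Y_{\le d}$ is the quotient, it is noetherian. The key idea you are missing is precisely this trick of absorbing the unbounded number of generators into an extra tensor factor of $\bk^{\infty}$ and then invoking Draisma for $\Sym^i\otimes\mathrm{id}$ rather than for $\Sym^i$ alone.
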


Unfortunately, the analog of Theorem~\ref{thm:strat} fails for $Y_{\le d}$ (it fails already for $d=1$). Nonetheless, one can deduce a certain analog of Theorem~\ref{thm:invariants} from Theorem~\ref{thm:noeth 2} (see Proposition~\ref{prop:invar2}).

\subsection{Topological comparison results}

To define the space $Y_\bd$, we realize it as the quotient space of another space $X_\bd$, which parametrizes type $\bd$ ideals together with a choice of generating set of type $\bd$. The space $X_\bd$ can also be described as the direct sum of the symmetric powers $\Sym^{d_i}(\bk^\infty)$. Each summand $\Sym^{d_i}(\bk^\infty)$ is the space of homogeneous degree $d_i$ polynomials in infinitely many variables. A key property that we use is that these spaces (and finite direct sums of them) are noetherian under the change of basis action given by $\GL = \bigcup_n \GL_n$. We deduce this from a recent result of Draisma \cite{draisma}. However, Draisma's results is not directly applicable: the space we describe here is the direct limit of the space of homogeneous polynomials in $n$ variables with $n \to \infty$, but Draisma's result applies to the corresponding inverse limit spaces.

Hence in \S\ref{s:polyvar}, we prove some topological comparison results that show that noetherianity of direct limit spaces and noetherianity of inverse limit spaces are equivalent to one another under suitable conditions which include the situation above. Namely, we work in the more general context of finite length polynomial functors and consider the corresponding spaces together with the change of basis action of $\GL$. There are a number of natural topologies that one might consider, and the key principle is that it does not matter which one is chosen as long as one works equivariantly.

\subsection{Outline}

In \S \ref{s:polyvar}, we collect the results we need about certain infinite dimensional varieties. In \S \ref{s:thms}, we prove the main theorems of the paper. In \S \ref{s:examples}, we give examples of degreewise bounded invariants. Finally, \S \ref{s:comments} has some further comments and generalizations.

\subsection*{Acknowledgments}

We thank Brian Lehmann and Claudiu Raicu for conversations inspiring \S\ref{ss:fano}, Giulio Caviglia for observations that simplified the proof of Theorem~\ref{thm:strat}, and Bhargav Bhatt and Mel Hochster for additional useful conversations. We thank BIRS for hosting us during the workshop ``Free Resolutions, Representations, and Asymptotic Algebra'', as some of the ideas in this paper were refined during that meeting.

\section{Varieties defined by polynomial functors} \label{s:polyvar}

\subsection{Setup}
For the purposes of this paper, a {\bf polynomial functor} over a field $\bk$ is an endofunctor of the category of vector spaces that is a subquotient of a direct sum of tensor power functors. When $\bk$ has characteristic~0, every polynomial functor is a direct sum of Schur functors. The category of polynomial functors is abelian, and comes equipped with a tensor product defined by
\[
  (F \otimes G)(U)=F(U) \otimes G(U).
\]
This tensor product is equipped with the symmetry that interchanges $F(U)$ and $G(U)$.

Fix a finitely generated algebra object $\ul{R}$ in the category of polynomial functors. Examples of such algebra objects are easy to come by: if $\ul{V}$ is a finite length polynomial functor then $\Sym(\ul{V})$ is such an algebra object. In fact, these are the only examples relevant to this paper. Let $R_n=\ul{R}(\bk^n)$, a finitely generated $\bk$-algebra, and let $X_n=\Spec(R_n)$.

The standard inclusion $\bk^n \to \bk^{n+1}$, given by $(x_1,\dots,x_n) \mapsto (x_1,\dots,x_n,0)$, induces an inclusion $R_n \to R_{n+1}$ and thus a projection $X_{n+1} \to X_n$. We let $R$ be the direct limit of the $R_n$, and let $\wh{X}$ be the inverse limit of the $X_n$'s in the category of schemes, which is simply the affine scheme $\Spec(R)$. We let $\vert \wh{X} \vert$ be the topological space underlying the scheme $\wh{X}$, which is the inverse limit of the spaces $\vert X_n \vert$.

The standard projection $\bk^{n+1} \to \bk^n$, given by $(x_1,\dots,x_n,x_{n+1}) \mapsto (x_1,\dots,x_n)$, induces a surjection $R_{n+1} \to R_n$ and thus a closed immersion $X_n \to X_{n+1}$. We let $\wh{R}$ be the inverse limit of the $R_n$'s, regarded as a topological ring, and we let $X$ be the ind-scheme defined by the directed system $\{X_n\}$. Let $\vert X \vert$ be the direct limit of the sets $\vert X_n \vert$. We define the {\bf ind-topology} on $\vert X \vert$ to be the direct limit topology, and let $\vert X \vert^{\ind}$ denote the resulting topological space. The set $\vert X \vert$ is canonically identified with the set of open prime ideals in the topological ring $\wh{R}$. In this way, $\vert X \vert$ is a subset of $\Spec(\wh{R})$, and one can give it the subspace topology. We call this the {\bf Zariski topology}, and denote the resulting space by $\vert X \vert^{\Zar}$. The Zariski topology is, in many situations, the ``correct'' topology, since its closed sets are zero loci of equations, but it is often easier to check that a set is closed in the ind-topology, since this can be checked in each $X_n$ separately. Every Zariski closed set is ind-closed, but the converse is not true in general; see \cite{anderson} for an example.

By definition, $R_n$ maps to $R$. However, there is also a canonical map in the opposite direction. Indeed, for each $m \ge n$ the standard projection $\bk^m \to \bk^n$ induces a ring homomorphism $R_m \to R_n$. These are compatible, and thus define a map $R \to R_n$. The composition $R_n \to R \to R_n$ is the identity, and so the map $R_n \to R$ is injective while the map $R \to R_n$ is surjective. These surjections are compatible as $n$ varies, and thus define a map $R \to \wh{R}$. By the same reasoning, we get a map $X \to \wh{X}$.

Recall that a topological space $Y$ equipped with an action of a group $G$ is {\bf topologically $G$-noetherian} if every descending chain of $G$-stable closed subsets stabilizes. Draisma proved that, over any field $\bk$, $\vert \wh{X} \vert$ is topologically $\GL$-noetherian \cite[Theorem 1]{draisma}, where here, and in what follows, $\GL$ denotes the algebraic group $\bigcup_{n \ge 1} \GL_n$.  The goal of this section is to transfer Draisma's result to the spaces $\vert X \vert^{\Zar}$ and $\vert X \vert^{\ind}$ and some related spaces.

\subsection{Comparison of $X$ and $\wh{X}$}

We begin by comparing $\wh{X}$ to the Zariski topology on $X$.

\begin{proposition} \label{prop:top1}
We have a bijection
\begin{displaymath}
\{ \text{$\GL$-stable closed subsets of $\vert \wh{X} \vert$} \} \to \{ \text{$\GL$-stable closed subsets of $\vert X \vert^{\Zar}$} \}
\end{displaymath}
given by $Z \mapsto Z \cap \vert X \vert$. In particular, $\vert X \vert^{\Zar}$ is $\GL$-noetherian.
\end{proposition}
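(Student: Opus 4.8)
The plan is to establish the bijection by exhibiting maps in both directions and checking they are mutually inverse. The forward map $Z \mapsto Z \cap \vert X \vert$ is well-defined: $\vert X \vert \subset \vert \wh X \vert$ carries the subspace topology by definition of the Zariski topology on $X$, so $Z \cap \vert X\vert$ is closed in $\vert X \vert^{\Zar}$, and it is clearly $\GL$-stable since both $Z$ and $\vert X \vert$ are. For the reverse map, given a $\GL$-stable Zariski-closed subset $W \subset \vert X \vert^{\Zar}$, I would send it to $\ol{W}$, its closure in $\vert \wh X \vert$. This is $\GL$-stable because the closure of a $\GL$-stable set under a continuous group action is again $\GL$-stable (the $\GL$-action on $\vert \wh X\vert$ is by homeomorphisms, and $\GL$ is a union of groups $\GL_n$ each acting continuously). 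Surjectivity of the forward map then amounts to the identity $\ol{W} \cap \vert X\vert = W$, which holds because $W$ is already closed in the subspace topology on $\vert X\vert$, so $W = \ol{W}^{\,\vert X\vert} = \ol{W} \cap \vert X\vert$ by definition of subspace closure.

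The substantive content is \emph{injectivity} of the forward map, equivalently: if $Z \subset \vert \wh X\vert$ is $\GL$-stable and closed, then $Z$ is the closure of $Z \cap \vert X\vert$. Since $Z$ is closed and contains $Z \cap \vert X\vert$, we automatically have $\ol{Z \cap \vert X\vert} \subseteq Z$; the issue is the reverse inclusion, i.e.\ that $Z \cap \vert X\vert$ is \emph{dense} in $Z$. I expect this to be the main obstacle, and I would attack it using the two canonical maps from the setup: the map $X \to \wh X$ (whose image is $\vert X\vert \subset \vert \wh X\vert$) together with the retractions $\wh X \to X_n \hookrightarrow X$ coming from the standard projections $\bk^m \to \bk^n$. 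Concretely, a point $p \in \vert\wh X\vert$ corresponds to an open prime of $\wh R$; pushing it forward along $\wh R \to R_n$ (the surjection from the setup, which recall composes with $R_n \hookrightarrow R \to \wh R$ to build the relevant compatible system) gives a point $p_n \in \vert X_n\vert \subseteq \vert X\vert$, and these $p_n$ ought to converge to $p$ in $\vert \wh X\vert$ in the sense that every Zariski-closed set containing all $p_n$ contains $p$. The point is that an equation vanishing at $p$ lives in some $R_n$ (or is a limit), and its vanishing can be detected after projecting to a large enough $X_n$. If $Z$ is cut out by equations, each equation involves only finitely many "coordinates," so it is pulled back from some $R_n$, and hence $p \in Z$ can be certified by $p_n \in Z \cap \vert X_n\vert \subseteq Z \cap \vert X\vert$. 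To make this rigorous I would argue: for $p \in Z$, the points $p_n \in Z \cap \vert X\vert$ (using $\GL$-stability or just that $Z$ is pulled back compatibly — one must check $p_n \in Z$, which follows since the projection $\wh X \to X_n \hookrightarrow \wh X$ can be realized, up to the $\GL$-action, in a way compatible with $Z$ being closed and pulled back from finite level), and $p$ lies in the closure of $\{p_n\}$, hence in $\ol{Z \cap \vert X\vert}$.

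\textbf{The "In particular" clause} is then immediate: given a descending chain $W_1 \supseteq W_2 \supseteq \cdots$ of $\GL$-stable Zariski-closed subsets of $\vert X\vert^{\Zar}$, apply the (order-preserving) inverse bijection to get a descending chain $\ol{W_1} \supseteq \ol{W_2} \supseteq \cdots$ of $\GL$-stable closed subsets of $\vert\wh X\vert$; this stabilizes by Draisma's theorem that $\vert\wh X\vert$ is topologically $\GL$-noetherian; intersecting back with $\vert X\vert$ (using $W_i = \ol{W_i}\cap\vert X\vert$) shows the original chain stabilizes. Thus $\vert X\vert^{\Zar}$ is $\GL$-noetherian.

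\textbf{Remark on the delicate point.} The genuinely delicate step, as flagged above, is verifying that for $p \in Z$ the projected points $p_n$ actually lie in $Z$ and not merely in $\vert X\vert$; this is where one must use that $Z$, being a $\GL$-stable closed subset of $\vert\wh X\vert$, is defined by equations that only involve finitely many of the "variables" — a consequence of the polynomial-functor structure, since the coordinate ring $R$ is a union of the finitely-generated pieces $R_n$ and any single element of $\wh R$ defining part of $Z$ can be handled one finite level at a time. An alternative, possibly cleaner, route is to avoid pointwise limit arguments entirely and instead show directly that the ideal of $\ol{Z \cap \vert X\vert}$ in $\wh R$ equals the ideal of $Z$, by checking the two ideals have the same image in each $R_n$ — which reduces to the classical fact that in the noetherian ring $R_n$ a closed set is determined by its ideal, combined with tracking how $Z$ and its trace on $X$ restrict to each finite level.
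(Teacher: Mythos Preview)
Your argument has a genuine gap in precisely the direction you dismissed as trivial. You write that ``$\vert X\vert \subset \vert\wh X\vert$ carries the subspace topology by definition of the Zariski topology on $X$,'' and then conclude $\ol{W}\cap\vert X\vert = W$ from general facts about subspace closures. But this is not how the Zariski topology on $\vert X\vert$ is defined in the setup: recall that $\vert\wh X\vert = \Spec(R)$ with $R=\varinjlim R_n$, whereas the Zariski topology on $\vert X\vert$ is the subspace topology inherited from $\Spec(\wh R)$ with $\wh R=\varprojlim R_n$. These are different rings, and the Zariski topology on $\vert X\vert$ is a priori strictly finer than the subspace topology from $\vert\wh X\vert$, since closed sets in $\Spec(\wh R)$ can be cut out by ``infinite polynomials'' that have no analogue in $R$. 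So a $\GL$-stable Zariski-closed $W\subset\vert X\vert$ is not obviously of the form $C\cap\vert X\vert$ for some closed $C\subset\vert\wh X\vert$, and your subspace-closure identity does not follow. The paper closes this gap with Lemma~\ref{lem:lem1}: every element of a closed $\GL$-stable ideal of $\wh R$ is a limit of finite polynomials already in that ideal. This is exactly the statement that, on $\GL$-stable closed sets, the infinite equations can be replaced by finite ones, so that the two subspace topologies agree there.

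Your treatment of the other direction---that a $\GL$-stable closed $Z\subset\vert\wh X\vert$ equals the closure of $Z\cap\vert X\vert$---is on the right track and matches the paper's Lemma~\ref{lem:lem2}. The point you correctly flag as delicate, namely that the projected points $p_n$ lie in $Z$, is handled by observing that the retraction $\wh X\to X_n\to\wh X$ is induced by an endomorphism of $\bk^\infty$, and any $\GL$-stable ideal of $R$ is automatically stable under all such endomorphisms (they are limits of elements of $\GL$); this is cleaner than trying to argue that $Z$ is ``pulled back from finite level.'' Your deduction of the noetherianity clause from Draisma's theorem is correct.
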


We refer to elements of the image of the map $R \to \wh{R}$ as {\bf finite polynomials}.

\begin{lemma} \label{lem:lem1}
Suppose $I \subset \wh{R}$ is a closed, $\GL$-stable ideal. Then every element of $I$ is the limit of finite polynomials that also belong to $I$.
\end{lemma}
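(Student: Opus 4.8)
The plan is to describe $\wh{R}$ as $\varprojlim_n R_n$ with each $R_n$ discrete, so that, $I$ being closed, $I=\varprojlim_n I_n$ where $I_n\subseteq R_n$ is the image of $I$ under the projection $\pi_n\colon \wh{R}\to R_n$; note $I_n$ is a $\GL_n$-stable ideal since $\pi_n$ is $\GL_n$-equivariant. Given $a=(a_n)_n\in I$, the finite polynomials I will use are the $\hat{a}_N$ obtained as the images of $a_N\in R_N$ under $R_N\hookrightarrow R\to\wh{R}$. By construction $\hat{a}_N$ is a finite polynomial, and chasing through the maps $R\to R_m$ one finds $\pi_m(\hat{a}_N)=a_m$ for all $m\le N$; hence $\hat{a}_N$ and $a$ agree in their first $N$ coordinates, so $\hat{a}_N\to a$ in $\wh{R}$. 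Thus the lemma follows once we know $\hat{a}_N\in I$, i.e. $\pi_m(\hat{a}_N)\in I_m$ for every $m$.

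For $m\le N$ this is clear since $\pi_m(\hat{a}_N)=a_m\in I_m$. For $m\ge N$, chasing the maps again gives $\pi_m(\hat{a}_N)=\ul{R}(p)(a_m)$, where $p\colon\bk^m\to\bk^m$ is the projection onto the first $N$ coordinates. So everything reduces to the following claim: \emph{if $J\subseteq R_m$ is a $\GL_m$-stable ideal and $c\in J$, then $\ul{R}(p)(c)\in J$}; applying this with $J=I_m$ and $c=a_m$ will finish the proof.

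To prove the claim I would degenerate $p$ to the identity within $\GL_m$. Set $g_t=\mathrm{diag}(1,\dots,1,t,\dots,t)$ (with $N$ ones), so $g_1=\mathrm{id}$ while $g_0=p$. Since $J$ is stable under the scalar matrices it is homogeneous, so it suffices to argue in a single graded piece $\ul{R}_d(\bk^m)$, which is finite-dimensional; there, $t\mapsto\ul{R}_d(g_t)(c)$ is a morphism $\mathbb{A}^1\to\ul{R}_d(\bk^m)$ because $\ul{R}$ is a polynomial functor, it takes values in the linear (hence closed) subspace $J\cap\ul{R}_d(\bk^m)$ for $t$ invertible, and therefore — since $J$ is a subrepresentation of the algebraic group $\GL_m$, so this holds over $\bk[t,t^{-1}]$ and not merely at $\bk$-points — specializing $t\to 0$ yields $\ul{R}(p)(c)=\ul{R}(g_0)(c)\in J$.

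The one place requiring care is this last claim, which is the crux: it says that ``restricting $a_m$ to its first $N$ variables'' keeps it inside $I_m$, and the degeneration argument needs ``$\GL$-stable'' to be understood in the algebraic-group (subrepresentation) sense, so that the one-parameter family $g_t$ may be used in any characteristic. Granting the claim, $\pi_m(\hat{a}_N)\in I_m$ for all $m$, so $\hat{a}_N\in\varprojlim_m I_m=I$, and since $\hat{a}_N\to a$ this proves the lemma.
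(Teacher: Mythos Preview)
Your proof is correct and follows the same high-level strategy as the paper: both form the finite truncations of $f\in I$ via the composite $\wh{R}\to R_N\to\wh{R}$, observe that these are finite polynomials converging to $f$, and argue that they lie in $I$. The difference lies in how the last point is justified. The paper asserts in one line that any $\GL$-stable subspace of $\wh{R}$ is automatically stable under every continuous endomorphism of $\widehat{\bk}^{\infty}$ (of which the projection $\widehat{\bk}^{\infty}\to\bk^N\to\widehat{\bk}^{\infty}$ is an instance), and leaves that assertion unproved. You instead unwind things through the identification $I=\varprojlim_m I_m$ coming from closedness, reduce to showing that each $I_m$ is stable under the finite projection $p\colon\bk^m\to\bk^m$, and prove this directly via the one-parameter degeneration $g_t=\mathrm{diag}(1,\dots,1,t,\dots,t)$. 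Your route is therefore a concrete proof of exactly the special case of the paper's assertion that is needed here, and your explicit remark that ``$\GL$-stable'' must be read in the algebraic-group (subrepresentation) sense---so that the degeneration is valid over $\bk[t,t^{-1}]$ rather than merely at $\bk$-points---pinpoints precisely the subtlety the paper's one-line assertion elides.
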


\begin{proof}
Any continuous endomorphism of $\widehat{\bk}^{\infty} = \varprojlim \bk^n$ acts on $\widehat{R}$, and any $\GL$-stable subspace of $\widehat{R}$ (such as $I$) is automatically stable by these additional operators. Let $f \in I$, and let $f_n$ be its image under $\widehat{R} \to R_n \to \widehat{R}$. The composite map $\widehat{R} \to \widehat{R}$ is the action of a continuous endomorphism $\widehat{\bk}^{\infty} \to \bk^n \to \widehat{\bk}^{\infty}$, and so it maps $I$ into itself. We thus see that $f_n \in I$. Since $f$ is the limit of the $f_n$'s, the result follows.
\end{proof}

\begin{lemma} \label{lem:lem2}
If $Z \subset \wh{X}$ is a $\GL$-stable closed subset, then every point of $Z$ is a limit of points of $Z \cap X$.
\end{lemma}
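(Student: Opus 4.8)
The plan is to realize $Z$ as the zero locus of a $\GL$-stable ideal of $R$, to observe that such an ideal is automatically stable under the idempotent ``truncation'' endomorphisms of $R$, and then to write down explicit points of $Z \cap \vert X\vert$ approximating any given point of $Z$. Throughout I identify $R_n$ with its image in $R$, and write $R = \ul{R}(\bk^\infty)$ with $\bk^\infty = \varinjlim \bk^n$.

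Set $I \subset R$ equal to the ideal of elements vanishing on $Z$; it is radical, $\GL$-stable, and $Z = V(I)$. Let $\beta_m \colon R \to R_m \subset R$ be the canonical surjection from the setup; as an endomorphism of $R$ it equals $\ul{R}(\epsilon_m)$, where $\epsilon_m \colon \bk^\infty \to \bk^\infty$ is the projection onto the first $m$ coordinates. Since $R = \varinjlim R_n$ is a filtered union, every element of $R$ lies in some $R_m$, so every basic open of $\vert \wh{X}\vert = \varprojlim \vert X_n\vert$ is the preimage, under a coordinate map $\mathfrak{q} \mapsto \mathfrak{q} \cap R_m$, of a basic open of $\vert X_m\vert$. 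Hence, writing a point $p \in Z$ as the prime $\mathfrak{p} \subset R$ with coordinates $p_m = \mathfrak{p} \cap R_m$, to prove $p \in \overline{Z \cap \vert X\vert}$ it suffices to exhibit for each $m$ a point of $Z \cap \vert X\vert$ whose $m$-th coordinate is $p_m$.

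The heart of the matter — and the step I expect to be the only genuine obstacle — is the claim that $\ul{R}(\epsilon_m)(I) \subseteq I$ for every $m$. This is the direct-limit analogue of the fact, used in the proof of Lemma~\ref{lem:lem1}, that a $\GL$-stable subspace of $\wh{R}$ is preserved by every continuous endomorphism of $\wh{\bk}^\infty$; the difference is that $\epsilon_m$ is not invertible, so instead of appealing to $\GL$-stability directly I would exhibit $\epsilon_m$ as a degeneration of elements of $\GL$. Namely, let $\bk^\times$ act on $\bk^\infty$ fixing $x_1, \dots, x_m$ and scaling each of $x_{m+1}, x_{m+2}, \dots$ by $t$, giving a one-parameter subgroup $g_t = \operatorname{diag}(1, \dots, 1, t, t, \dots)$ of $\GL$; then $I$ is a $\bk^\times$-subrepresentation, hence graded for the induced weight decomposition $R = \bigoplus_{k \ge 0} V_k$, say $I = \bigoplus_k (I \cap V_k)$. (If $\GL$-stability is only assumed through $\bk$-points, one also uses that $\bk$ is infinite.) Because $\ul{R}$ is a polynomial functor, $t \mapsto \ul{R}(g_t)$ is polynomial in $t$ and, on each piece, equals $\sum_k t^k \cdot (\text{projection onto } V_k)$; setting $t = 0$ identifies $\ul{R}(\epsilon_m)$ with the projection onto $V_0$, so $\ul{R}(\epsilon_m)(I) = I \cap V_0 \subseteq I$.

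Granting the claim, I would finish as follows. Fix $p \in Z$ and $m \ge 1$ and put $q^{(m)} := \beta_m^{-1}(p_m)$, a prime of $R$ containing $\ker \beta_m$; thus $q^{(m)} \in \vert X_m\vert \subseteq \vert X\vert$, and since $\beta_m$ restricts to the identity on $R_m$ its $m$-th coordinate $q^{(m)} \cap R_m$ equals $p_m$. Moreover $q^{(m)} \in V(I) = Z$: by the claim $\ul{R}(\epsilon_m)(I) \subseteq I \subseteq \mathfrak{p}$, and $\ul{R}(\epsilon_m)(I) \subseteq R_m$, so $\ul{R}(\epsilon_m)(I) \subseteq \mathfrak{p} \cap R_m = p_m$, i.e.\ $\beta_m(I) \subseteq p_m$, i.e.\ $I \subseteq q^{(m)}$. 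So the $q^{(m)}$ are the points required by the reduction above, whence $p \in \overline{Z \cap \vert X\vert}$; as $Z$ is closed and contains $Z \cap \vert X\vert$, we get $Z = \overline{Z \cap \vert X\vert}$, which is the assertion.
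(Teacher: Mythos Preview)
Your argument is correct and follows the same route as the paper's: show that the truncation endomorphism $\epsilon_m$ of $\bk^\infty$ preserves the defining ideal $I$ of $Z$, and then use the induced map on $\Spec(R)$ to produce, for each $m$, a point $q^{(m)} \in Z \cap \vert X\vert$ with the same $m$-th coordinate as the given $p \in Z$. The paper simply asserts (citing the proof of Lemma~\ref{lem:lem1}) that every endomorphism of $\bk^\infty$ carries a $\GL$-stable ideal of $R$ into itself, whereas you supply a torus-degeneration justification; one small inaccuracy to fix is that $g_t = \operatorname{diag}(1,\dots,1,t,t,\dots)$ does not literally lie in $\GL = \bigcup_n \GL_n$ (whose elements fix all but finitely many coordinates), but since $g_t$ restricts to an element of $\GL_n$ on each $\bk^n$ and $I = \bigcup_n (I \cap R_n)$ with each $I \cap R_n$ being $\GL_n$-stable, your weight argument goes through unchanged.
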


\begin{proof} 
Let $I \subset R$ be the ideal defining $Z$. As in the previous proof, any endomorphism of $\bk^{\infty}$ acts on $R$ and carries $I$ to itself. Let $f_n \colon \wh{X} \to \wh{X}$ be the map induced by the projection $\bk^{\infty} \to \bk^n \to \bk^{\infty}$. We thus see $f_n(Z) \subset Z$. Let $x \in Z$ and put $x_n=f_n(x)$. The map $f_n$ factors as $\wh{X} \to X_n \to X \to \wh{X}$, and so $x_n \in Z \cap X$ for all $n$. Since $x$ is the limit of the $x_n$, the result follows.
\end{proof}

\begin{proof}[Proof of Proposition~\ref{prop:top1}]
Define a map in the opposite direction by sending a $\GL$-stable Zariski closed set $Z$ of $\vert X \vert$ to its closure in $\vert \wh{X} \vert$. We claim that the two maps are inverse.

Start with $Z\subseteq \vert X \vert$. Let $Z'$ be the closure of $Z$ in $\vert \wh{X} \vert$ and let $Z''=Z' \cap \vert X \vert$.  Clearly $Z \subset Z''$. Let $x$ be a point of $Z''$. We must show that every infinite polynomial that vanishes on $Z$ also vanishes on $x$. If $f$ is some infinite polynomial vanishing on $Z$, then by Lemma~\ref{lem:lem1}, it is a limit of finite polynomials $f_n$ vanishing on $Z$. Each $f_n$ vanishes on $x$, and hence so does the limiting polynomial $f$. So $x \in Z$.

Now consider $W\subseteq \vert \wh{X} \vert$. Let $W'$ be the closure of $W \cap \vert X \vert$ in $\vert \wh{X} \vert$. Then clearly $W' \subset W$. Let $x$ be a point of $W$. Lemma~\ref{lem:lem2} shows that every point in $W$ is a limit of points from $W\cap Y$ and hence we obtain the opposite containment as well.

We have thus shown that $Z \mapsto Z \cap \vert X \vert$ defines a bijection as in the statement of the proposition. Draisma's theorem shows that the source of this bijection satisfies the descending chain condition. Thus the target does as well, which shows that $\vert X \vert^{\Zar}$ is $\GL$-noetherian.
\end{proof}

\subsection{Comparison of the ind- and Zariski topologies}

The Zariski and ind-topologies on an ind-scheme are typically very different. We show that this distinction disappears in our situation, where we focus on $\GL$-stable subsets.

\begin{proposition} \label{prop:top2}
A $\GL$-stable subset of $\vert X \vert$ is Zariski closed if and only if it is ind-closed. In particular, $\vert X \vert^{\ind}$ is $\GL$-noetherian.
\end{proposition}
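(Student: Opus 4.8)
The plan is to establish the nontrivial implication: a $\GL$-stable ind-closed subset $Z \subseteq \vert X \vert$ is Zariski closed. (The converse is exactly the assertion from the setup that every Zariski closed set is ind-closed; it also follows directly, since each $\mathfrak p_0 \in \vert X_n \vert$ maps to the open prime $(\wh R \to R_n)^{-1}(\mathfrak p_0)$ of $\wh R$, and the resulting map $\vert X_n\vert \to \vert X\vert^{\Zar}$ is continuous.) Let $I(Z) \subseteq \wh R$ be the ideal of functions vanishing on $Z$; it is $\GL$-stable, and it is closed because $Z$ consists of open primes and every open prime of $\wh R$ is also closed. By Lemma~\ref{lem:lem1}, every element of $I(Z)$ is a limit of finite polynomials lying in $I(Z)$, and since the coordinate $R_n$ sits discretely inside $\wh R = \varprojlim R_m$, this forces the image of $I(Z)$ in $R_n$ to equal the image of $I(Z) \cap R$ (the set of finite polynomials in $I(Z)$). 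Hence $V(I(Z)) \cap \vert X_n\vert = V(J_n)$, where $J_n \subseteq R_n$ is the image of $I(Z)\cap R$; and since $\vert X\vert = \bigcup_n \vert X_n\vert$, it suffices to prove $V(J_n) = Z_n$ for every $n$, where $Z_n := Z \cap X_n$ is a closed, $\GL_n$-stable subset of $X_n$ by ind-closedness.

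The containment $J_n \subseteq I(Z_n)$ is immediate (a finite polynomial vanishing on $Z$ vanishes on $Z_n$), so the task is to show that every $g \in R_n$ vanishing on $Z_n$ lifts to a finite polynomial vanishing on all of $Z$; this will give $I(Z_n) \subseteq J_n$, hence $V(J_n) = V(I(Z_n)) = Z_n$. There is only one candidate lift, namely the image $\tilde g$ of $g$ under the canonical map $R_n \to R$: for $m \ge n$ its image in $R_m$ is the pullback of $g$ along the projection $q_{m,n} \colon X_m \to X_n$ (informally, ``set $x_{n+1} = \dots = x_m = 0$''), and for $m < n$ its image in $R_m$ is the restriction of $g$ to the closed subscheme $X_m \subseteq X_n$. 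In the second case vanishing on $Z_m$ is automatic, since $Z_m = Z_n \cap X_m$. So everything reduces to the set-theoretic claim that $q_{m,n}(Z_m) \subseteq Z_n$ for $m \ge n$, and I expect this to be the main obstacle.

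For that claim $\GL$-stability is used crucially, at finite level. Composed with the inclusion $X_n \hookrightarrow X_m$, the map $q_{m,n}$ is the self-map of $X_m$ induced by the idempotent $e \in \operatorname{End}(\bk^m)$ projecting onto $\langle x_1,\dots,x_n\rangle$. Now $\GL_m$ is a dense open subvariety of the affine space $\operatorname{End}(\bk^m)$, and the action $\operatorname{End}(\bk^m) \times X_m \to X_m$ is a morphism of varieties; since $Z_m$ is closed and $\GL_m$-stable, the locus $\{(\phi,z) \in \operatorname{End}(\bk^m)\times Z_m : \phi\cdot z \notin Z_m\}$ is open and disjoint from the dense subset $\GL_m \times Z_m$, hence empty. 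Taking $\phi = e$ gives $e\cdot Z_m \subseteq Z_m$; as $e\cdot Z_m$ lies in the image of $X_n \hookrightarrow X_m$ and $Z_m \cap X_n = Z_n$, this yields $q_{m,n}(Z_m) \subseteq Z_n$, completing the argument that $Z = V(I(Z)) \cap \vert X\vert$ is Zariski closed. The final assertion, that $\vert X\vert^{\ind}$ is $\GL$-noetherian, is then immediate from the equivalence just proved together with the $\GL$-noetherianity of $\vert X\vert^{\Zar}$ in Proposition~\ref{prop:top1}.
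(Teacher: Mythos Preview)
Your proof is correct and reaches the conclusion by a genuinely different route than the paper's. The paper works entirely inside the direct-limit ring $R$: for each $n$ it forms the smallest $\GL$-stable radical ideal $I^{(n)}\subset R$ containing $I(Z_n)$, proves a separate lemma (Lemma~\ref{lem:radical}) that the image in $R_n$ of any $\GL$-stable radical ideal of $R$ is again radical, and uses this to show that $J=\bigcap_n I^{(n)}$ has $V(J)=Z$. You instead pass to $\wh R$: you note that $I(Z)$ is a closed $\GL$-stable ideal (open primes being closed additive subgroups), invoke Lemma~\ref{lem:lem1} to reduce to finite polynomials, and then prove directly the geometric statement $q_{m,n}(Z_m)\subseteq Z_n$ via Zariski density of $\GL_m$ in $\End(\bk^m)$. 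The core mechanism is the same---a $\GL_m$-stable closed object is automatically $\End(\bk^m)$-stable because $\GL_m$ is dense---but the paper packages it algebraically (stability of radical ideals under the projection endomorphism), while you use it set-theoretically on $Z_m$. Your approach has the virtue of avoiding the auxiliary ideals $I^{(n)}$ and the radical lemma altogether, recycling instead Lemma~\ref{lem:lem1} from the proof of Proposition~\ref{prop:top1}; the paper's approach stays within $R$ and does not need to verify topological properties of ideals in $\wh R$.
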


\begin{lemma} \label{lem:radical}
Let $I$ be a $\GL$-stable radical ideal of $R$. Then its image in $R_n$ is radical, for any $n$.
\end{lemma}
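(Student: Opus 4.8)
The plan is to exploit the fact that the map in question is a split surjection. Write $\iota_n \colon R_n \to R$ for the canonical inclusion and $\pi_n \colon R \to R_n$ for the canonical surjection, so that the image of $I$ in $R_n$ is $I_n := \pi_n(I)$ and $\pi_n \circ \iota_n = \mathrm{id}_{R_n}$. The composite $\epsilon_n := \iota_n \circ \pi_n$ is the idempotent ring endomorphism of $R = \ul{R}(\bk^\infty)$ induced by the idempotent endomorphism $\bk^\infty \to \bk^n \hookrightarrow \bk^\infty$ of $\bk^\infty$, exactly as in the proofs of Lemmas~\ref{lem:lem1} and~\ref{lem:lem2}.

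First I would show that $\epsilon_n(I) \subseteq I$. This is the same observation already used in those two lemmas: a $\GL$-stable subspace of $R$ is automatically stable under every $\bk$-linear endomorphism of $\bk^\infty$, since the relevant idempotent lies in the closure of the $\GL$-action on the polynomial functor $\ul{R}$ and $\GL$-stable subspaces are closed. In particular $\iota_n(I_n) = \epsilon_n(I) \subseteq I$. Combining this with $\pi_n \circ \iota_n = \mathrm{id}$ and the surjectivity of $\pi_n$ yields the identity $I_n = \iota_n^{-1}(I)$: if $\iota_n(s) \in I$ then $s = \pi_n(\iota_n(s)) \in \pi_n(I) = I_n$, giving $\iota_n^{-1}(I) \subseteq I_n$; and conversely any $s \in I_n = \pi_n(I)$ has $\iota_n(s) \in \iota_n(\pi_n(I)) = \epsilon_n(I) \subseteq I$, giving $I_n \subseteq \iota_n^{-1}(I)$.

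Finally, since $\iota_n \colon R_n \to R$ is a ring homomorphism, the preimage $\iota_n^{-1}(I)$ of the radical ideal $I$ is a radical ideal of $R_n$ (if $s^k \in \iota_n^{-1}(I)$ then $\iota_n(s)^k \in I$, so $\iota_n(s) \in I$, so $s \in \iota_n^{-1}(I)$); by the previous step this preimage is precisely $I_n$, so we are done. The only step carrying any content is the $\epsilon_n$-stability of $I$, but this is literally the principle already established and invoked in the proofs of Lemmas~\ref{lem:lem1} and~\ref{lem:lem2} (there phrased for $\wh{R}$, and for the ideal of a $\GL$-stable closed subset, respectively), so I do not anticipate a genuine obstacle — one need only note that radicalness is not needed for that step, only $\GL$-stability.
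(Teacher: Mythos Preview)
Your proof is correct and follows essentially the same approach as the paper: both establish $I_n = I \cap R_n$ (your $\iota_n^{-1}(I)$) by invoking stability of $I$ under the idempotent endomorphism induced by $\bk^\infty \to \bk^n \to \bk^\infty$, then conclude radicalness from the fact that contraction along a ring map preserves radical ideals. The only cosmetic difference is that the paper verifies the stability step at each finite level $R_m$ (observing that the projection $\bk^m \to \bk^n \to \bk^m$ is a limit of elements of $\GL_m$), whereas you cite the analogous principle directly from Lemmas~\ref{lem:lem1} and~\ref{lem:lem2}.
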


\begin{proof}
Let $I_n$ be the image of $I$ in $R_n$. We claim that $I_n = I \cap R_n$. Since the composite $R_n \to R \to R_n$ is the identity, it is clear that any element of $I \cap R_n$ is contained in $I_n$. Conversely, the projection map $R_m \to R_n \subset R_m$ is obtained by applying the projection map $\bk^m \to \bk^n \subset \bk^m$, which can be realized as the limit of elements of $\GL_m$, and so maps $I \cap R_m$ into itself. We thus see that any element of $I_n$ belongs to $I$, which shows $I_n \subset I \cap R_n$.

Now suppose $x \in R_n$ and $x^k \in I_n$. Since $I$ is a radical ideal and $x^k \in I$, we have $x \in I$. Thus $x \in I \cap R_n = I_n$, and so $I_n$ is a radical ideal of $R_n$.
\end{proof}

\begin{proof}[Proof of Proposition~\ref{prop:top2}]
Every Zariski closed set is ind-closed (even without $\GL$-stability), so it suffices to show that if $Z \subset \vert X \vert$ is $\GL$-stable and ind-closed then it is Zariski closed. Let $Z_n= Z \cap X_n$, a Zariski closed subset of $X_n$. Let $I_n \subset R_n$ be the unique radical ideal such that $V(I_n)=Z_n$. Then $I_n$ is $\GL_n$-stable. Let $I^{(n)}$ be the smallest $\GL$-stable radical ideal of $R$ that contains $I_n$. Its zero locus is the intersection of the $\GL$-translates of the inverse image of $Z_n$ in $X$. Hence, if $m \ge n$, the image of $I^{(m)}$ in $R_n$ defines $Z_n$, since $Z_m \cap X_n = Z_n$. But this image is radical, by Lemma~\ref{lem:radical}, so $I^{(m)} \cap R_n = I_n$. If $m<n$, the image of $I^{(m)}$ in $R_n$ is a closed subset of $Z_n$. Again, $I^{(m)} \cap R_n$ is radical by Lemma~\ref{lem:radical}, so $I^{(m)} \cap R_n \supseteq I_n$. Now let $J = \bigcap_{n \ge 1} I^{(n)}$. Then $J \cap R_n = I_n$, so $V(J) = Z$ and we see that $Z$ is Zariski-closed.
\end{proof}

\subsection{Some variants}

Let $R_n^*=\ul{R}((\bk^n)^*)$, where $(\bk^n)^*$ denotes the dual space to $\bk^n$. Of course, $R_n^*$ is isomorphic to $R_n$ as a $\bk$-algebra, but the action of $\GL_n$ is different. Let $X^*$ and $\wh{X}^*$ be defined analogously to before. Given an action of $\GL$ or $\GL_n$ on some object, we define the {\bf conjugate action} as the precomposition with the automorphism $g \mapsto {}^tg^{-1}$.

\begin{proposition}
We have isomorphisms of $($ind-$)$schemes $\wh{X} \to \wh{X}^*$ and $X \to X^*$ that are $\GL$-equivariant for the conjugate action on the source and the standard action on the target.
\end{proposition}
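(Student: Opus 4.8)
The plan is to build the isomorphism at each finite level from a single canonical source --- the standard bilinear form on $\bk^n$ --- and then check compatibility with the $\GL_n$-actions and with the transition maps defining $X$ and $\wh{X}$.

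First I would introduce the linear map $\iota_n\colon \bk^n \to (\bk^n)^*$ sending $v$ to the functional $w \mapsto \sum_i v_iw_i$. Since this pairing is perfect, $\iota_n$ is an isomorphism, and with respect to the standard basis of $\bk^n$ and the dual basis of $(\bk^n)^*$ it is simply the identity matrix. The essential point is equivariance: $\iota_n$ intertwines the conjugate $\GL_n$-action on $\bk^n$ with the standard (contragredient) $\GL_n$-action on $(\bk^n)^*$, because for $g \in \GL_n$,
\[
\iota_n({}^tg^{-1}v)(w) = {}^t\bigl({}^tg^{-1}v\bigr)\,w = {}^tv\,g^{-1}w = \iota_n(v)(g^{-1}w) = \bigl(g\cdot\iota_n(v)\bigr)(w).
\]

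Next I would apply the functor $\ul{R}$. For any linear map $\phi\colon U \to V$ the induced map $\ul{R}(\phi)\colon \ul{R}(U) \to \ul{R}(V)$ is a homomorphism of $\bk$-algebras, since the multiplication and unit of $\ul{R}$ are natural transformations, and $\ul{R}$ carries isomorphisms to isomorphisms. Hence $\ul{R}(\iota_n)$ is a $\bk$-algebra isomorphism $R_n = \ul{R}(\bk^n) \to \ul{R}((\bk^n)^*) = R_n^*$, and functoriality of $\ul{R}$ combined with the equivariance of $\iota_n$ makes it $\GL_n$-equivariant for the conjugate action on the source and the standard action on the target.

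It remains to match up the transition maps and pass to the limit, and this is the only step requiring care: one must keep track of the fact that the dual of the standard inclusion $\bk^n \to \bk^{n+1}$ is the standard projection $(\bk^{n+1})^* \to (\bk^n)^*$, and conversely, so that $X^*$ and $\wh{X}^*$ really are built from the $R_n^*$ using the maps induced (via dual bases) by the standard inclusions and projections of the $(\bk^n)^*$'s. Since each $\iota_n$ is the identity matrix in these bases, the squares comparing $\iota_n$ and $\iota_{n+1}$ along the inclusions $\bk^n\to\bk^{n+1}$ --- which produce $R = \varinjlim R_n$ and $\wh{X} = \Spec R$ --- and along the projections $\bk^{n+1}\to\bk^n$ --- which produce the ind-scheme $X$ from the closed immersions $X_n \hookrightarrow X_{n+1}$ --- commute on the nose. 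Therefore the $\ul{R}(\iota_n)$ assemble into a $\bk$-algebra isomorphism $R \to R^*$, yielding $\wh{X} = \Spec R \cong \Spec R^* = \wh{X}^*$, and into an isomorphism of the directed systems $\{X_n\} \to \{X_n^*\}$, yielding $X \cong X^*$ as ind-schemes; compatibility of the $\GL_n$-equivariances as $n$ grows upgrades both isomorphisms to $\GL$-equivariant ones, with the conjugate action on the source and the standard action on the target. Thus the main obstacle is purely organizational --- pinning down the conventions so that the transition squares commute --- after which the whole argument rests on the single displayed identity and the functoriality of $\ul{R}$.
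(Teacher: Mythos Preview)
Your proof is correct and follows essentially the same approach as the paper: both construct the isomorphism by applying $\ul{R}$ to the linear isomorphism $\bk^n \to (\bk^n)^*$ given by the standard basis (your $\iota_n$ is exactly the paper's map $e_i \mapsto e_i^*$), observe its equivariance for the conjugate versus standard actions, and then pass to the limits. The paper's version is simply more terse, leaving the explicit equivariance computation and the compatibility with transition maps to the reader, whereas you have spelled these out.
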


\begin{proof}
Let $e_1, \ldots, e_n$ be the standard basis for $\bk^n$ and $e_1^*, \ldots, e_n^*$ the dual basis for $(\bk^n)^*$. We have a linear isomorphism $i_n \colon \bk^n \to (\bk^n)^*$ taking $e_i$ to $e_i^*$. This is $\GL_n$-equivariant using the conjugate action on the source and the standard action on the target. The $i_n$ thus induce the requisite isomorphisms $\wh{X} \to \wh{X}^*$ and $X \to X^*$.
\end{proof}

A subset of $\vert \wh{X} \vert$ or $\vert X \vert$ is stable under the standard $\GL$-action if and only if it is stable under the conjugate $\GL$-action. Thus, by Proposition~\ref{prop:top2} and its corollary, we find:

\begin{corollary}\label{cor:X*ind}
A $\GL$-stable subset of $\vert X^* \vert$ is ind-closed if and only if it is Zariski closed.
\end{corollary}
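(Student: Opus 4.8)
The plan is to deduce this from Proposition~\ref{prop:top2} by transporting the situation along the $\GL$-equivariant isomorphism $X \to X^*$ constructed just above. Write $\phi \colon X \to X^*$ for that isomorphism of ind-schemes; by construction it is equivariant for the conjugate $\GL$-action on the source and the standard $\GL$-action on the target, and it is induced by the family of linear isomorphisms $i_n \colon \bk^n \to (\bk^n)^*$. The first step is to fix a subset $Z \subseteq \vert X^* \vert$ that is stable under the standard $\GL$-action and to consider its preimage $Z' = \phi^{-1}(Z) \subseteq \vert X \vert$. By equivariance, $Z'$ is stable under the conjugate $\GL$-action, and by the remark immediately preceding this corollary, a subset of $\vert X \vert$ is conjugate-$\GL$-stable precisely when it is $\GL$-stable in the usual sense; hence $Z'$ is a $\GL$-stable subset of $\vert X \vert$.

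The second step is to check that $\phi$ is simultaneously a homeomorphism $\vert X \vert^{\ind} \to \vert X^* \vert^{\ind}$ and a homeomorphism $\vert X \vert^{\Zar} \to \vert X^* \vert^{\Zar}$. For the ind-topology this is immediate: $\phi$ is the direct limit of the homeomorphisms $\vert X_n \vert \to \vert X_n^* \vert$ coming from the $i_n$, so it is a homeomorphism of the direct-limit topologies. For the Zariski topology, note that the $i_n$ also induce a topological ring isomorphism $\wh{R}^* \to \wh{R}$, hence a homeomorphism $\Spec(\wh{R}) \to \Spec(\wh{R}^*)$ carrying the subset $\vert X \vert$ onto $\vert X^* \vert$ compatibly with $\phi$; restricting the subspace topologies shows $\phi$ is a homeomorphism for the Zariski topologies as well. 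Consequently $Z$ is ind-closed (resp.\ Zariski closed) in $\vert X^* \vert$ if and only if $Z'$ is ind-closed (resp.\ Zariski closed) in $\vert X \vert$.

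The final step is to invoke Proposition~\ref{prop:top2} for the $\GL$-stable subset $Z'$: it is Zariski closed if and only if it is ind-closed. Combining this with the equivalences from the previous step gives the asserted equivalence for $Z$. The same transport, applied to descending chains, also shows $\vert X^* \vert^{\ind}$ is $\GL$-noetherian. I do not anticipate a genuine difficulty in this argument; the only point requiring a little care is the verification that $\phi$ respects both topologies at once, and this is purely formal once one recalls that $X^*$, $\wh{X}^*$, and all the comparison maps were obtained by applying the polynomial functor $\ul{R}$ to the $\GL_n$-equivariant linear isomorphisms $i_n$.
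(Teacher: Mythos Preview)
Your proposal is correct and follows essentially the same route as the paper: the paper simply observes that a subset of $\vert X \vert$ is stable for the standard $\GL$-action if and only if it is stable for the conjugate action, and then invokes Proposition~\ref{prop:top2} via the isomorphism $X \to X^*$. You have supplied more detail than the paper does, in particular the explicit verification that $\phi$ is a homeomorphism for both the ind- and Zariski topologies, but this is exactly the content the paper leaves implicit in calling $\phi$ an isomorphism of ind-schemes.
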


\begin{corollary} \label{cor:conjtop}
The spaces $\vert \wh{X}^* \vert$, $\vert X^* \vert^{\Zar}$, and $\vert X^* \vert^{\ind}$ are topologically $\GL$-noetherian.
\end{corollary}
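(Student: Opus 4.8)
The plan is to deduce topological $\GL$-noetherianity of the three starred spaces from that of their unstarred counterparts, which we already have in hand: $\vert\wh X\vert$ is topologically $\GL$-noetherian by Draisma's theorem, and $\vert X\vert^{\Zar}$ and $\vert X\vert^{\ind}$ are topologically $\GL$-noetherian by Propositions~\ref{prop:top1} and~\ref{prop:top2}. The transfer will be along the $\GL$-equivariant isomorphisms $\wh X\to\wh X^*$ and $X\to X^*$ of the preceding proposition, using in an essential way the two facts that (i) these isomorphisms are equivariant for the \emph{conjugate} action on the source and the \emph{standard} action on the target, and (ii) a subset of $\vert\wh X\vert$ or of $\vert X\vert$ is stable under the standard $\GL$-action if and only if it is stable under the conjugate $\GL$-action.

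First I would note that the isomorphism of schemes $\wh X\to\wh X^*$ gives a homeomorphism $\vert\wh X\vert\to\vert\wh X^*\vert$. Both this isomorphism and the ind-scheme isomorphism $X\to X^*$ are induced by the linear maps $i_n\colon\bk^n\to(\bk^n)^*$ of the preceding proof, hence are compatible; consequently $X\to X^*$ gives a homeomorphism in the ind-topology, and also in the Zariski topology, since the latter is the subspace topology on $\vert X\vert\subset\Spec(\wh R)$ and the induced isomorphism $\wh R\to\wh R^*$ carries $\vert X\vert$ onto $\vert X^*\vert$. By equivariance, each of these homeomorphisms carries conjugate-$\GL$-stable closed subsets of the source bijectively, and in an inclusion-preserving way, onto standard-$\GL$-stable closed subsets of the target. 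Invoking (ii), the conjugate-$\GL$-stable closed subsets of the source coincide with the standard-$\GL$-stable closed subsets of the source, so in each of the three cases we obtain an isomorphism of posets between the standard-$\GL$-stable closed subsets of the unstarred space and those of the starred space. Since the former posets satisfy the descending chain condition, so do the latter, which is precisely the assertion that $\vert\wh X^*\vert$, $\vert X^*\vert^{\Zar}$, and $\vert X^*\vert^{\ind}$ are topologically $\GL$-noetherian. (One can also shortcut the Zariski case: once $\vert X^*\vert^{\ind}$ has been treated, Corollary~\ref{cor:X*ind} identifies the $\GL$-stable closed subsets in the two topologies on $\vert X^*\vert$.)

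There is no real obstacle here; the whole argument is a matter of keeping straight which action (standard versus conjugate) and which topology (ind versus Zariski) is in play at each step. The one point that requires a moment's thought—that the ind-scheme isomorphism $X\to X^*$ is a homeomorphism for the Zariski topologies and not merely the ind-topologies—follows immediately from its compatibility with the scheme isomorphism $\wh X\to\wh X^*$ together with the definition of the Zariski topology on $\vert X\vert$ as the subspace topology from $\Spec(\wh R)$.
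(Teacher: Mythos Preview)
Your proposal is correct and follows essentially the same route as the paper. The paper's entire argument is the single sentence preceding the two corollaries: conjugate-$\GL$-stability and standard-$\GL$-stability coincide on $\vert\wh X\vert$ and $\vert X\vert$, so the equivariant isomorphisms of the preceding proposition transport the noetherianity results for the unstarred spaces (Draisma's theorem together with Propositions~\ref{prop:top1} and~\ref{prop:top2}) to the starred ones; you have simply written this out explicitly, including the check that the level-wise maps $i_n$ assemble to a homeomorphism in each of the three topologies.
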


\begin{example} \label{ex:poly}
Let $\ul{V}$ be a finite length polynomial functor and let $\ul{R}=\Sym(\ul{V})$. Let $V_n=\ul{V}(\bk^n)$ and let $V=\varinjlim V_n$. Also let $V_*=\varinjlim V_n^*$ be the restricted dual of $V$, where the limit is taken with respect to the standard inclusions of $(\bk^n)^*$ into $(\bk^{n+1})^*$. Then we have canonical identifications
\begin{displaymath}
\wh{X}=V^*, \qquad X=V_*, \qquad \wh{X}^*=(V_*)^*, \qquad X^*=V,
\end{displaymath}
where here $V^*$ and $(V_*)^*$ are the usual linear duals of the spaces $V$ and $V_*$.
\end{example}

\begin{remark}
The moral of this section is that all limit topological spaces one can sensibly form from $\ul{R}$ are essentially equivalent when working equivariantly. This heuristic does \emph{not} hold in some similar situations; see \cite[\S 4]{eggsnow} for examples.
\end{remark}

\section{Theorems about $Y_{\bd}$} \label{s:thms}

\subsection{Notation}

Let $A_n$ be the polynomial ring $\bk[x_1, \ldots, x_n]$ and let $A$ be the infinite polynomial ring $\bk[x_1, x_2, \ldots]$. For a tuple $\ff=(f_1,\ldots, f_r)$ of elements of $A_n$, we let $I_{\ff,n}$ be the ideal of $A_n$ that they generate, and let $I_{\ff}$ be the ideal of $A$ that they generate. Let $B_{\ff,n}=A_n/I_{\ff,n}$ and $B_{\ff}=A/I_{\ff}$.

For an integer $d>0$, let $X_{d,n}=\Sym^d(\bk^n)$, regarded as an affine scheme. For a degree tuple $\bd=(d_1,\dots,d_r)$, we let
\begin{displaymath}
X_{\bd,n}=X_{d_1,n} \times \cdots \times X_{d_r,n}
\end{displaymath}
and we let $X_{\bd}$ be the ind-scheme defined by the system $(X_{\bd,n})_n$. This fits into the variant setup of the previous section, since $X_{\bd}$ is the scheme $X^*$ from Example~\ref{ex:poly} with $\ul{V}=\Sym^{d_1} \oplus \cdots \oplus \Sym^{d_r}$. Let $\cA_{\bd,n}$ be the sheaf of algebras $A_n \otimes \cO_{X_{\bd,n}}$ on $X_{\bd,n}$. A $\bk$-point of $X_{\bd,n}$ corresponds to a tuple $\ff=(f_1, \ldots, f_r)$ of elements of $A_n$. The family of ideals $I_{\ff,n}$ assembles to an ideal sheaf $\cI_{\bd,n}$ of $\cA_{\bd,n}$ (meaning that the image of the fiber $\cI_{\bd,n}(\ff)$ in the fiber $\cA_{\bd,n}(\ff)=A_n$ is $I_{\ff,n}$). We let $\cB_{\bd,n}$ be the quotient sheaf; its fiber at $\ff$ is $B_{\ff,n}$.

\subsection{The space $Y_{\bd}$}

Let $Y_{\bd}$ be the set of isomorphism classes of type $\bd$ ideals in $A$, where we say that ideals $I$ and $J$ are isomorphic if there exists an isomorphism $\sigma \colon A \to A$ of graded rings with $\sigma(I)=J$. Let $X_{\bd}^{\circ}$ be the set of closed points in $X_{\bd}$ and let $\pi \colon X_\bd^{\circ} \to Y_\bd$ be the map taking a tuple $\ff$ to the class of the ideal $I_{\ff}$ that it generates.  The map $\pi$ is surjective and $\GL$-invariant. We give $Y_{\bd}$ the induced topology, using the Zariski topology on $X_{\bd}^{\circ}$. Thus a subset $Z$ of $Y_{\bd}$ is closed if and only if $\pi^{-1}(Z)$ is Zariski closed in $X_{\bd}^{\circ}$. Since $\pi^{-1}(Z)$ is $\GL$-stable, it is Zariski closed if and only if it is ind-closed (Proposition~\ref{prop:top2}), so the ind-topology on $X_{\bd}^{\circ}$ induces the same topology on $Y_{\bd}$.

\begin{remark}
If $\bd=(d,\ldots,d)\in \bZ^r$, then two tuples $\ff,\bg \in X_{\bd}$ generate the same ideal in $A$ if and only if they differ by an element of $\GL_r$, and generate isomorphic ideals if and only if they differ by an element of $\GL \times \GL_r$. We thus see that $Y_{\bd}$ is the quotient of $X_{\bd}^\circ$ by the group $\GL \times \GL_r$. For general $\bd$, it is more complicated to describe $Y_{\bd}$ directly.
\end{remark}

We define $Y_{\bd,n}$ as the set of isomorphism classes of ideals in $A_n$ given the topology induced by the surjection $\pi_n \colon X^{\circ}_{\bd,n} \to Y_{\bd,n}$. There are natural maps $Y_{\bd,n} \to Y_{\bd,n+1}$ and $Y_{\bd,n} \to Y_{\bd}$.

\begin{proposition}
The space $Y_{\bd}$ is the direct limit of the spaces $Y_{\bd,n}$.
\end{proposition}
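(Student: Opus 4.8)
The plan is to prove that the tautological map $\phi\colon\varinjlim_n Y_{\bd,n}\to Y_{\bd}$ is a homeomorphism. This map exists because the maps $Y_{\bd,n}\to Y_{\bd}$ are compatible with the transition maps $Y_{\bd,n}\to Y_{\bd,n+1}$ (both send the class of $J\subset A_n$ to the class of $JA$), and it is continuous by the universal property of the direct limit topology; so it suffices to show that $\phi$ is bijective and closed. Throughout I will use the following simplification: since $\pi$ is $\GL$-invariant, any set of the form $\pi^{-1}(Z)$ is $\GL$-stable, so by Proposition~\ref{prop:top2} it is Zariski closed in $X_{\bd}^{\circ}$ exactly when it is ind-closed, i.e. exactly when its intersection with each $X^{\circ}_{\bd,n}$ is closed. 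Thus every closedness question about $Y_{\bd}$ reduces to closedness in the finite-dimensional schemes $X_{\bd,n}$.

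Surjectivity of $\phi$ is immediate: a type $\bd$ ideal $I=(f_1,\dots,f_r)\subset A$ has all of its finitely many generators in some $A_n$, so $\ff=(f_1,\dots,f_r)$ defines a point of $X^{\circ}_{\bd,n}$ whose class in $Y_{\bd,n}$ maps to the class of $I$. For injectivity I start with $\ff\in X^{\circ}_{\bd,n}$ and $\bg\in X^{\circ}_{\bd,m}$ with $\pi(\ff)=\pi(\bg)$, so that there is a graded ring automorphism $\sigma\colon A\to A$ with $\sigma(I_{\ff})=I_{\bg}$, and I must find $N$ for which $\ff$ and $\bg$ already represent the same class in $Y_{\bd,N}$. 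The hard part is that $\sigma$ is an arbitrary automorphism of the degree-one piece $\bigoplus_i\bk x_i$ and may fail to preserve any of the finite-variable subrings $A_N$; this is the step I expect to be the main obstacle.

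To get around this I will replace $\sigma$ by a well-chosen automorphism preserving some $A_N$. Since $\sigma(I_{\ff})$ is generated by $\sigma(f_1),\dots,\sigma(f_r)$ and each $\sigma(f_i)$ is a polynomial in the finitely many linear forms $\sigma(x_1),\dots,\sigma(x_n)$, I choose $N_0$ so that all these linear forms lie in $A_{N_0}$. As $\sigma$ is injective, its restriction carries the span of $x_1,\dots,x_n$ injectively into the span of $x_1,\dots,x_{N_0}$, and any such injection extends to a linear automorphism of the span of $x_1,\dots,x_{N_0}$; extending further by the identity on the remaining variables gives a graded automorphism $\tau$ of $A$ that restricts to a graded automorphism of $A_N$ for all $N\ge N_0$ and satisfies $\tau(f_i)=\sigma(f_i)$ (because $f_i\in A_n$), hence $\tau(I_{\ff})=I_{\bg}$. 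Finally, writing relations $g_j=\sum_i a_{ji}\,\sigma(f_i)$ and $\sigma(f_i)=\sum_j b_{ij}\,g_j$ in $A$ and enlarging $N\ge N_0$ so that all the finitely many coefficients $a_{ji},b_{ij}$ lie in $A_N$, I obtain $(\sigma(f_1),\dots,\sigma(f_r))A_N=(g_1,\dots,g_r)A_N$; together with $\tau(I_{\ff}A_N)=(\sigma(f_i))A_N$ and the fact that $\tau$ restricts to an automorphism of $A_N$, this shows $\tau$ carries $I_{\ff}A_N=I_{\ff,N}$ to $I_{\bg}A_N=I_{\bg,N}$, so $\ff$ and $\bg$ have the same class in $Y_{\bd,N}$, proving injectivity.

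It remains to check that $\phi$ is closed. Given a closed set $W\subseteq\varinjlim_n Y_{\bd,n}$, put $Z=\phi(W)$; using bijectivity of $\phi$ one verifies that $\pi^{-1}(Z)\cap X^{\circ}_{\bd,n}$ is precisely the $\pi_n$-preimage of the preimage of $W$ under the canonical map $Y_{\bd,n}\to\varinjlim_n Y_{\bd,n}$, which is closed because $W$ is closed and $\pi_n$ is continuous. Hence $\pi^{-1}(Z)$ is ind-closed in $X_{\bd}^{\circ}$, therefore Zariski closed by Proposition~\ref{prop:top2}, therefore $Z$ is closed in $Y_{\bd}$. So $\phi$ is a continuous closed bijection, hence a homeomorphism. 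In summary, the only genuinely substantive point is the replacement of the possibly ``infinitary'' automorphism $\sigma$ of $A$ by the automorphism $\tau$ preserving $A_N$; the ideal comparisons in $A$ versus $A_N$ are routine ``finitely many variables'' bookkeeping, and the passage from the set-theoretic bijection to a homeomorphism is formal given Proposition~\ref{prop:top2}.
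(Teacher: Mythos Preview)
Your proof is correct and follows essentially the same approach as the paper: the key step in both is the equivalence of the ind- and Zariski topologies on $\GL$-stable subsets of $X_{\bd}^{\circ}$, which reduces the comparison of $Y_{\bd}$ with $\varinjlim_n Y_{\bd,n}$ to a finite-level check. Two small remarks: (i) since $X_{\bd}$ is an instance of $X^*$ (see Example~\ref{ex:poly}), the precise reference for that equivalence is Corollary~\ref{cor:X*ind} rather than Proposition~\ref{prop:top2}; (ii) the paper simply declares the set-level bijection ``clear,'' whereas you carefully replace the arbitrary graded automorphism $\sigma$ of $A$ by one preserving some $A_N$---this extra care is not wrong, just more than the authors felt necessary.
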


\begin{proof}
It is clear that the set $Y_{\bd}$ is the direct limit of the sets $Y_{\bd,n}$. If $Z$ is a subset of $Y_{\bd}$, then the following are equivalent: 
\begin{itemize}
\item $Z$ is closed;
\item $\pi^{-1}(Z)$ is closed in $X_{\bd}^{\circ}$ (by definition of the topology on $Y_{\bd}$);
\item $\pi^{-1}(Z) \cap X_{\bd,n}^{\circ}$ is closed for all $n$ (by Corollary~\ref{cor:X*ind});
\item $\pi_n^{-1}(Z \cap Y_{\bd,n})$ is closed for all $n$ (it equals $\pi^{-1}(Z) \cap X_{\bd,n}^{\circ}$);
\item $Z \cap Y_{\bd,n}$ is closed for all $n$  (by definition of the topology on $Y_{\bd,n}$);
\item $Z$ is closed in the direct limit topology. 
\end{itemize}
Thus the topology on $Y_{\bd}$ is the direct limit topology.
\end{proof}

\begin{theorem*}[Theorem~\ref{thm:noeth}]
The space $Y_{\bd}$ is noetherian.
\end{theorem*}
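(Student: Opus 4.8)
The plan is to deduce noetherianity of $Y_{\bd}$ from the topological $\GL$-noetherianity of the space of generating tuples established in \S\ref{s:polyvar}, by pushing a descending chain of closed sets down along the quotient map $\pi \colon X_{\bd}^\circ \to Y_{\bd}$.

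First I would make the formal reduction. Since $\pi$ is surjective and $\GL$-invariant and $Y_{\bd}$ carries the quotient topology, a descending chain $Z_1 \supseteq Z_2 \supseteq \cdots$ of closed subsets of $Y_{\bd}$ pulls back to a descending chain $\pi^{-1}(Z_1) \supseteq \pi^{-1}(Z_2) \supseteq \cdots$ of $\GL$-stable closed subsets of $X_{\bd}^\circ$; if the latter chain stabilizes, then so does the former, because $Z_i = \pi(\pi^{-1}(Z_i))$ by surjectivity. Hence it suffices to show that $X_{\bd}^\circ$ is topologically $\GL$-noetherian.

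Next I would invoke \S\ref{s:polyvar}. As recorded in the notation above, $X_{\bd}$ is the space $X^*$ of Example~\ref{ex:poly} attached to the finite length polynomial functor $\ul{V} = \Sym^{d_1} \oplus \cdots \oplus \Sym^{d_r}$, so by Corollary~\ref{cor:conjtop} the space $|X_{\bd}|^{\Zar}$ is topologically $\GL$-noetherian. It then remains to transfer this from the full space $|X_{\bd}|$ to the subspace $X_{\bd}^\circ$ of closed points. Given a descending chain of $\GL$-stable closed subsets $W_i \subseteq X_{\bd}^\circ$, I would pass to the closures $\overline{W_i}$ inside $|X_{\bd}|$: these are again $\GL$-stable (as $\GL$ acts by homeomorphisms), they form a descending chain, and so they stabilize by the previous step. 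Since $W_i$ is closed in the subspace topology, $W_i = \overline{W_i} \cap X_{\bd}^\circ$, and therefore the chain $(W_i)$ stabilizes as well. (One could equally well argue level by level, using that each $X_{\bd,n}$ is of finite type over $\bk$, so that closed points are very dense, together with Corollary~\ref{cor:X*ind}.)

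I do not anticipate a genuine obstacle: the substantive content, namely Draisma's theorem and its transport between the inverse-limit, Zariski, and ind-topologies, is entirely carried out in \S\ref{s:polyvar}, and what is left is bookkeeping with the quotient topology on $Y_{\bd}$ and with the passage to closed points. The single point that warrants care is precisely this last passage, i.e. checking that restricting to closed points does not destroy the descending chain condition; the closure argument above handles it cleanly.
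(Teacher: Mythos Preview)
Your proof is correct and matches the paper's argument essentially line for line: pull back a descending chain along $\pi$ to a descending chain of $\GL$-stable Zariski closed subsets of $X_{\bd}^\circ$, invoke Corollary~\ref{cor:conjtop}, and conclude. The only difference is that you explicitly justify the passage from $\vert X_{\bd} \vert$ to the subspace of closed points via the closure argument, a detail the paper's two-sentence proof leaves implicit.
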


\begin{proof}
Suppose $Z_{\bullet}$ is a descending chain of closed subsets in $Y_{\bd}$. Then $\pi^{-1}(Z_{\bullet})$ is a descending chain of $\GL$-stable Zariski closed subsets of $X^{\circ}_{\bd}$, and thus stabilizes by Corollary~\ref{cor:conjtop}. It follows that $Z_{\bullet}$ stabilizes, and so $Y_{\bd}$ is noetherian.
\end{proof}

\begin{remark}
An ideal invariant induces a function $Y_{\bd,n} \to \bZ \cup \{\infty\}$ for each $n$. An ideal invariant is cone-stable if and only if it is compatible with the transition maps $Y_{\bd,n} \to Y_{\bd,n+1}$. It follows that a cone-stable ideal invariant induces a function $Y_{\bd} \to \bZ \cup \{\infty\}$.
\end{remark}

\subsection{Finiteness of initial ideals}

For an ideal $I \subset A_n$ we let $\gin(I)$ denote the generic initial ideal of $I$ under the revlex order. We note that $\gin(\sigma(I))=\gin(I)$ for $\sigma \in \GL_n$, essentially by definition. The proof of the following theorem crucially depends on the theorem of Ananyan--Hochster \cite{ananyan-hochster} (Stillman's conjecture).

\begin{theorem} \label{thm:ginfin}
Given $\bd$ there exist $B$ and $C$ such that for any $n$ and any type $\bd$ ideal $I$ of $A_n$ the ideal $\gin(I)$ is generated by monomials of degree at most $C$ in the variables $x_1, \ldots, x_B$.
\end{theorem}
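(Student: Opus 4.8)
The plan is to reduce the statement to a bound that depends only on the degree tuple $\bd$ by invoking the theorem of Ananyan--Hochster. The key point is that Stillman's conjecture provides, for each $\bd$, a uniform bound $N=N(\bd)$ such that any type $\bd$ ideal $I \subset A_n$ is ``generated'' (in a strong sense) by a subalgebra on at most $N$ variables: more precisely, there is a regular sequence of forms $g_1,\dots,g_s$ with $s \le N(\bd)$, of degrees bounded in terms of $\bd$, such that each $f_i$ lies in the subalgebra $\bk[g_1,\dots,g_s]$. (This is the form of the Ananyan--Hochster theorem giving ``small subalgebras''; the precise strength one needs is that the $g_j$ together with the other variables form, after a linear change of coordinates, a system where the relevant homological data is controlled.)

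First I would recall that $\gin$ is computed with respect to the revlex order and is invariant under $\GL_n$, so I am free to make a generic linear change of coordinates before computing it. Second, I would use Ananyan--Hochster to find the bounded collection $g_1, \dots, g_s$ with $s \le N(\bd)$ containing the $f_i$ in their span-algebra, and with the $g_j$ of degree at most $D(\bd)$. Third, after a generic change of coordinates I would arrange that $x_1, \dots, x_s$ (relabeling) play the role adapted to $g_1, \dots, g_s$, so that $I$ becomes an extension of an ideal $I' \subset \bk[x_1,\dots,x_B]$ for some $B=B(\bd)$; concretely, the subalgebra generated by the $g_j$ can be taken inside a polynomial ring on $B$ variables, and $I = I' A_n$ where $I'$ is type-$\bd'$ for a degree tuple depending only on $\bd$. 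Fourth, I would use that generic initial ideals commute with extension of scalars by polynomial variables: $\gin_{A_n}(I' A_n) = \gin_{\bk[x_1,\dots,x_B]}(I') \cdot A_n$, because the revlex $\gin$ ignores the later variables when the ideal is extended from earlier ones (this is where one uses that revlex eliminates the largest variables). Finally, $\gin(I')$ lives in a polynomial ring on $B(\bd)$ variables and is an ideal generated in degrees bounded by the Castelnuovo--Mumford regularity of $I'$, which — since $I'$ ranges over ideals of a fixed type $\bd'$ in a fixed number of variables $B(\bd)$ — is bounded by some $C(\bd)$ (finitely many Hilbert functions, or directly: regularity of $\gin$ equals regularity of $I'$, and there are only boundedly many possibilities). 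Setting $B = B(\bd)$ and $C = \max$ of these regularity bounds gives the theorem.

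I expect the main obstacle to be the bookkeeping in the third and fourth steps: carefully extracting from Ananyan--Hochster the statement that, after a generic coordinate change, $I$ is genuinely \emph{extended} from an ideal in boundedly many variables (rather than merely ``close to'' such an ideal), and then verifying the compatibility of revlex $\gin$ with adjoining variables. The first of these requires the strong form of Stillman — that the small subalgebra can be chosen generated by forms that, together with linear forms, can be completed to a coordinate system — and the second requires a short but genuine computation with the revlex order showing that a generic initial ideal of an extended ideal is the extension of the generic initial ideal. Once these two facts are in place, the reduction to ``finitely many ideals of type $\bd'$ in $B(\bd)$ variables'' and hence to a uniform degree bound $C(\bd)$ on generators of $\gin$ is routine.
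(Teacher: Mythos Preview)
Your third step contains a genuine gap. The Ananyan--Hochster small-subalgebra theorem tells you that $f_1,\dots,f_r$ lie in $\bk[g_1,\dots,g_s]$ for a regular sequence of forms $g_j$ with $s$ and $\deg g_j$ bounded in terms of $\bd$; it does \emph{not} say that, after a linear change of coordinates, $f_1,\dots,f_r$ lie in $\bk[x_1,\dots,x_B]$ for bounded $B$. The latter is simply false: take $\bd=(2)$ and $f_1 = x_1x_2 + x_3x_4 + \cdots + x_{2n-1}x_{2n}$. This quadric has rank $2n$, so no linear change of coordinates puts it into fewer than $2n$ variables; yet $\gin(I)=(x_1^2)$ lives in one variable. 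The passage ``the subalgebra generated by the $g_j$ can be taken inside a polynomial ring on $B$ variables'' confuses the fact that $\bk[g_1,\dots,g_s]$ is \emph{abstractly isomorphic} to a polynomial ring in $s$ variables with the much stronger (and false) claim that it sits inside $\bk[x_1,\dots,x_B]\subset A_n$. Your acknowledged obstacle (``completing to a coordinate system'') is fatal here, because the $g_j$ are typically nonlinear and cannot be made into coordinate variables by any linear automorphism.

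The paper's argument uses Ananyan--Hochster only in its Stillman-conjecture form: a uniform bound $\pdim(A_n/I)\le B$. By Auslander--Buchsbaum this gives $\operatorname{depth}(A_n/I)\ge n-B$, so after a generic linear change of coordinates $x_n,\dots,x_{n-s+1}$ is a regular sequence on $A_n/I$ with $n-s\le B$. The Bayer--Stillman criterion then says these same linear forms are regular on $A_n/\gin(I)$ (for revlex), forcing $\gin(I)\subset\bk[x_1,\dots,x_B]$. The degree bound $C$ comes, as you correctly anticipate, from Caviglia's theorem bounding regularity in terms of projective dimension and $\bd$, together with the Bayer--Stillman fact that $\reg(\gin(I))=\reg(I)$. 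So your instinct about the regularity half is right; the missing ingredient is replacing ``extended from few variables'' by ``has large depth,'' which is what the projective-dimension bound actually delivers.
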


\begin{proof}
By Stillman's conjecture for $\bd$ \cite[Theorem C]{ananyan-hochster}, there exists a bound $B$ such that $\pdim(A_n/I) \le B$ for any type $\bd$ ideal $I$ of $A_n$. By a theorem of Caviglia~\cite[Theorem 2.4]{ms}, this implies that there exists $C$ such that $\reg(I) \le C$ for any such $I$ (the proof of that result shows that $C$ only depends on $B$ and not on the underlying field). Both $B$ and $C$ are independent of $n$.

Let $I$ be a type $\bd$ ideal of $A_n$ and put $s = \operatorname{depth}(A_n/I)$. After a general change of coordinates, we can assume that $x_{n}, x_{n-1}, \dots, x_{n-s+1}$ form a regular sequence on $A_n/I$. Since the projective dimension of $A_n/I$ is at most $B$, the Auslander--Buchsbaum Theorem~\cite[Theorem~19.9]{eisenbud} implies that $n-s\leq B$. By the Bayer--Stillman criterion~\cite[Theorem~2.4]{bayer-stillman}, the same $x_i$ form a regular sequence on the quotient of $A_n$ by the revlex initial ideal of $I$. Thus, $\gin(I)$ is definable in at most $B$ variables. Moreover~\cite[Corollary~2.5]{bayer-stillman} implies that the revlex generic initial ideal is definable in degree at most $C$.
\end{proof}

It is a consequence of \cite[Lemma~2.2]{bayer-stillman} that formation of revlex $\gin$ is cone-stable, and thus $\gin(I)$ is well-defined for any finitely generated homogeneous ideal $I \subset A$, and $\gin(I)$ is a finitely generated monomial ideal of $A$. We define a {\bf type $\bd$ revlex generic initial ideal} as an ideal of $A$ of the form $\gin(I)$ where $I$ is a type $\bd$ ideal. Theorem~\ref{thm:ginfin} yields:

\begin{corollary} \label{cor:ginfin}
There are only finitely many type $\bd$ revlex generic initial ideals.
\end{corollary}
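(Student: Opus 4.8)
The plan is to deduce finiteness directly from the uniform bounds in Theorem~\ref{thm:ginfin}. First I would fix $\bd$ and let $B$ and $C$ be the constants supplied by that theorem, so that for every $n$ and every type $\bd$ ideal $I \subset A_n$, the ideal $\gin(I)$ is generated by monomials of degree at most $C$ in the variables $x_1,\dots,x_B$. By cone-stability of revlex $\gin$ (as recorded just before the corollary), the same statement holds for any type $\bd$ ideal of $A$: its generic initial ideal is a monomial ideal of $A$ generated by monomials in $x_1,\dots,x_B$ of degree at most $C$.

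Next I would simply count: a monomial ideal of $A$ all of whose minimal generators are monomials of degree $\le C$ in the fixed finite set of variables $x_1,\dots,x_B$ is determined by a subset of the finite set $M$ of such monomials, namely $M = \{\, x_1^{a_1}\cdots x_B^{a_B} \mid a_i \ge 0,\ a_1+\cdots+a_B \le C \,\}$. Since $|M| = \binom{B+C}{B}$ is finite, there are only finitely many such ideals. Hence there are only finitely many possibilities for $\gin(I)$ as $I$ ranges over all type $\bd$ ideals of $A$, which is exactly the assertion.

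I do not anticipate a genuine obstacle here: the entire content is packaged in Theorem~\ref{thm:ginfin} and the cone-stability remark, and the corollary is a one-line finiteness count once those are in hand. The only point requiring a word of care is that a type $\bd$ revlex generic initial ideal is by definition an ideal of the infinite polynomial ring $A$, not of some $A_n$; this is precisely why one invokes cone-stability to pass from the $A_n$-statement of Theorem~\ref{thm:ginfin} to the $A$-statement, after which the bound ``monomials of degree $\le C$ in $x_1,\dots,x_B$'' pins each such ideal down among a finite list.

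\begin{proof}
Let $B$ and $C$ be as in Theorem~\ref{thm:ginfin}. If $I$ is a type $\bd$ ideal of $A$, then, since formation of revlex $\gin$ is cone-stable, $\gin(I)$ is a finitely generated monomial ideal of $A$ whose minimal generators all lie in the finite set $M = \{\, x_1^{a_1} \cdots x_B^{a_B} \mid a_i \ge 0,\ a_1 + \cdots + a_B \le C \,\}$ of monomials of degree at most $C$ in $x_1,\dots,x_B$. A monomial ideal is determined by its set of minimal generators, so there are at most $2^{|M|}$ such ideals. In particular, there are only finitely many type $\bd$ revlex generic initial ideals.
\end{proof}
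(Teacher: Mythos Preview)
Your proposal is correct and matches the paper's approach: the paper simply states that Theorem~\ref{thm:ginfin} yields the corollary without further argument, and your proof is exactly the intended one-line finiteness count (via cone-stability and the bounds $B$, $C$) that makes this implication explicit.
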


\begin{remark}
Corollary~\ref{cor:ginfin} fails for other term orders.  See~\cite[Appendix A.2]{snellman}.
\end{remark}

\subsection{Hilbert numerators}

For a homogeneous ideal $I$ of $A_n$, the Hilbert series $\rH_{A_n/I}(t)$ can be written as a rational function $Q(t)/(1-t)^n$ with $Q(t)\in \bZ[t]$.  We call $Q(t)$ the {\bf Hilbert numerator} of $A_n/I$, and we denote it by $\HN_{A_n/I}(t)$. Terminology for $\HN_{A_n/I}(t)$ varies in the literature: our usage follows~\cite[p.~282]{kreuzer-robbiano}, but it is also called the $K$-polynomial~\cite[Definition 8.21]{miller-sturmfels}, and has other names elsewhere. Note that the Hilbert numerator is not necessarily the numerator of $\rH_{A_n/I}(t)$ when written in lowest terms.

The advantage of the Hilbert numerator is that it is cone-stable: $\HN_{A_{n+1}/IA_{n+1}}(t)=\HN_{A_n/I}(t)$.  We can thus define the Hilbert numerator of $A/I$ for any finitely generated homogeneous ideal $I$ of $A$.  We define a {\bf type $\bd$ Hilbert numerator} to be a polynomial of the form $\HN_{A/I}(t)$ for $I$ a type $\bd$ ideal of $A$.

\begin{theorem} \label{thm:hilbfin}
There are only finitely many type $\bd$ Hilbert numerators.
\end{theorem}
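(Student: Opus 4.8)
The plan is to deduce this from Corollary~\ref{cor:ginfin} via the classical fact that passing to an initial ideal does not change the Hilbert function.

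First I would record the following. For any term order on $A_n$ and any homogeneous ideal $J \subset A_n$, the quotients $A_n/J$ and $A_n/\init(J)$ are isomorphic as graded $\bk$-vector spaces, hence have the same Hilbert series; since the Hilbert numerator of $A_n/K$ is the Hilbert series of $A_n/K$ multiplied by $(1-t)^n$, this gives $\HN_{A_n/J}(t) = \HN_{A_n/\gin(J)}(t)$.

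Now let $I$ be a type $\bd$ ideal of $A$. Being finitely generated, $I = I_{\ff}$ for a tuple $\ff = (f_1, \dots, f_r)$ with each $f_i \in A_n$ homogeneous of degree $d_i$, for some $n$. Both the Hilbert numerator and the formation of revlex $\gin$ are cone-stable, so $\HN_{A/I}(t) = \HN_{A_n/I_{\ff,n}}(t)$, the extension of $\gin(I_{\ff,n})$ to $A$ equals $\gin(I)$, and $\HN_{A_n/\gin(I_{\ff,n})}(t) = \HN_{A/\gin(I)}(t)$. Applying the identity of the previous paragraph with $J = I_{\ff,n}$ and combining, we obtain
\[
\HN_{A/I}(t) = \HN_{A/\gin(I)}(t).
\]
Thus every type $\bd$ Hilbert numerator is the Hilbert numerator of $A/\gin(I)$ for some type $\bd$ ideal $I$; since $\gin(I)$ ranges over the type $\bd$ revlex generic initial ideals, of which there are only finitely many by Corollary~\ref{cor:ginfin}, there are only finitely many type $\bd$ Hilbert numerators.

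I expect no serious obstacle here. The one point requiring care is bookkeeping: the Hilbert-function invariance of initial ideals and the compatibility of $\gin$ with adjoining variables are statements in a fixed finite polynomial ring $A_n$, and only afterwards does cone-stability of $\HN$ and of $\gin$ let one pass back to $A$. One could alternatively try to bound the graded Betti numbers of type $\bd$ ideals uniformly in $n$ and invoke the formula $\HN_{A/I}(t) = \sum_{i,j}(-1)^i \beta_{i,j}(A/I)\,t^j$, but routing through $\gin$ avoids needing that extra boundedness input.
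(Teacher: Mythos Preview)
Your proof is correct and follows essentially the same approach as the paper: both deduce the result from Corollary~\ref{cor:ginfin} via the fact that $\HN_{A/I}(t) = \HN_{A/\gin(I)}(t)$, which in turn follows from the Hilbert-series invariance of passing to an initial ideal together with cone-stability. The paper's proof is simply a terser version of yours, omitting the bookkeeping you spell out about passing between $A_n$ and $A$.
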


\begin{proof}
The Hilbert series associated to an ideal $I \subset A_n$ coincides with that of $\gin(I)$. It follows that the Hilbert numerator associated to an ideal $I \subset A$ coincides with that of $\gin(I)$, and so the result follows from Corollary~\ref{cor:ginfin}.
\end{proof}

\subsection{Two partial orders} \label{ss:order}

Given polynomials $f,g \in \bR[t]$, define $f<g$ if $f(x)<g(x)$ for all $0 < x < 1$. We use $f(x) \le g(x)$ to mean that either $f=g$ or that $f < g$, which is {\it not} the same as $f(x) \le g(x)$ for all $0 < x < 1$. Given series $f=\sum_{i \ge 0} a_i t^i$ and $g=\sum_{i \ge 0} b_i t^i$ in $\bR \lbb t \rbb$, define $f \preceq g$ if $a_i \le b_i$ for all $i$. Note that $f \le g \iff 0 \le g-f$, and similarly for $\preceq$.

\begin{proposition} \label{prop:order}
Let $f \in \bR[t]$. The following are equivalent:
\begin{enumerate}[\rm (a)]
\item $0 \le f$.
\item $f$ can be expressed in the form $\sum_{0 \le i,j \le N} c_{i,j} x^i (1-x)^j$ for some $N$ and non-negative coefficients $c_{i,j} \in \bR$.
\item There exists $N \ge 0$ such that $0 \preceq (1-t)^{-n} f(t)$ for all $n \ge N$.
\item There exists $n \ge 0$ such that $0 \preceq (1-t)^{-n} f(t)$.
\end{enumerate}
\end{proposition}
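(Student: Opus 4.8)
The plan is to establish the cycle of implications (a) $\Rightarrow$ (b) $\Rightarrow$ (c) $\Rightarrow$ (d) $\Rightarrow$ (a); only the first of these has any real content, the rest being power-series bookkeeping and a one-line analytic argument.

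\textbf{(a) $\Rightarrow$ (b).} Assume $f \ne 0$, so $f > 0$ on $(0,1)$. First I would pull off the zeros of $f$ at the two endpoints, writing $f(x) = x^{\alpha}(1-x)^{\beta} g(x)$ with $g \in \bR[x]$ and $g(0) \ne 0 \ne g(1)$. Since $x^{\alpha}(1-x)^{\beta} > 0$ on $(0,1)$, the factor $g$ is positive there, hence, by continuity together with $g(0), g(1) \ne 0$, strictly positive on the \emph{closed} interval $[0,1]$. Next I would homogenize $g$ using $1 = x + (1-x)$: if $g(x) = \sum_{k=0}^{m} g_k x^k$, set $\tilde g(u,v) = \sum_{k=0}^{m} g_k\, u^k (u+v)^{m-k}$, a binary form of degree $m$ with $\tilde g(x,1-x) = g(x)$ and $\tilde g(u,v) = (u+v)^m g\bigl(\tfrac{u}{u+v}\bigr) > 0$ for every $(u,v)$ in the closed first quadrant other than the origin --- in particular on the simplex $\{u,v \ge 0,\ u+v = 1\}$ and at its vertices. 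P\'olya's theorem on positive forms then provides an $N$ for which $(u+v)^N \tilde g(u,v)$ has all coefficients nonnegative; substituting $u = x$, $v = 1-x$ gives $g(x) = \sum_{a+b = N+m} e_{a,b}\, x^a (1-x)^b$ with $e_{a,b} \ge 0$, and multiplying through by $x^{\alpha}(1-x)^{\beta}$ produces an expression of the form demanded in (b). (Equivalently, one may invoke the classical fact that the Bernstein coefficients of a function strictly positive on $[0,1]$ are eventually positive, which amounts to the same statement.)

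\textbf{The remaining implications.} For (b) $\Rightarrow$ (c) I keep the same $N$: for $n \ge N$ one has $(1-t)^{-n}\, x^i (1-x)^j = t^i (1-t)^{-(n-j)}$ with $n - j \ge 0$, and $(1-t)^{-(n-j)}$ has nonnegative coefficients (it is $1$ when $n = j$ and $\sum_{k \ge 0} \binom{n-j-1+k}{k} t^k$ when $n > j$); multiplying by $t^i$ and summing against the nonnegative $c_{i,j}$ preserves nonnegativity of all coefficients, so $0 \preceq (1-t)^{-n} f(t)$. The implication (c) $\Rightarrow$ (d) is immediate (take $n = N$). For (d) $\Rightarrow$ (a): since $f$ is a polynomial, $(1-t)^{-n} f(t)$ is holomorphic on the open unit disc, so its Taylor series --- which by hypothesis has nonnegative coefficients --- converges to $(1-x)^{-n} f(x)$ for each $x \in (0,1)$; hence $(1-x)^{-n} f(x) \ge 0$ on $(0,1)$, with strict inequality unless every coefficient vanishes, i.e.\ unless $f = 0$. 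Thus $0 \le f$.

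\textbf{Main obstacle.} The one genuine point is (a) $\Rightarrow$ (b). The subtlety is that $f$ itself may vanish at $0$ or $1$, so one cannot simply homogenize $f$ and quote P\'olya: the theorem must be applied to the boundary-zero-free factor $g$, which is precisely why the elementary observation that $g$ extends to a strictly positive function on the \emph{closed} interval $[0,1]$ is doing essential work. Once that reduction is in place, the homogenization and the appeal to P\'olya are routine, as is everything in the other three implications.
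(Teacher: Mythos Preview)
Your proof is correct and follows the same route as the paper. The paper dispatches (a) $\Rightarrow$ (b) by citing P\'olya--Szeg\H{o} (Part~6, \S6, Problem~49), whereas you unpack that reference by factoring off endpoint zeros and invoking P\'olya's theorem on positive forms---which is exactly how that problem is solved; the remaining implications (b) $\Rightarrow$ (c) $\Rightarrow$ (d) $\Rightarrow$ (a) match the paper's argument essentially line for line.
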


\begin{proof}
(a) $\Rightarrow$ (b). After a change of coordinates, this is \cite[Part 6, \S 6, Problem 49]{polyaszego}.

(b) $\Rightarrow$ (c). Write $f=\sum_{0 \le i,j \le N} c_{i,j} x^i (1-x)^j$ as in (b). Let $n \ge N$. Then $(1-t)^{-n} f(t) = \sum_{0 \le i,j \le N} c_{i,j} x^i (1-t)^{-(n-j)}$. Since $n-j \ge 0$ for all $j$ in the sum, the series $(1-t)^{-(n-j)}$ has non-negative coefficients. Since the $c_{i,j}$ are non-negative, it follows that $(1-t)^{-n} f(t)$ has non-negative coefficients.

(c) $\Rightarrow$ (d). Obvious.

(d) $\Rightarrow$ (a). If $f=0$ then obviously (a) holds. Thus suppose $f \ne 0$. Let $n$ be such that $0 \preceq (1-t)^{-n} f(t)$, and let $x \in (0,1)$. The series $(1-t)^{-n} f(t)$ is non-zero, has non-negative coefficients, and converges at $t=x$. Thus its value at $t=x$ is positive. Since $1-x$ is also positive, we conclude that $f(x)$ is positive, and so (a) holds.
\end{proof}

\subsection{The flattening stratification} \label{ss:strat}

Let $\{H_{\lambda}\}_{\lambda \in \Lambda}$ be the set of all type $\bd$ Hilbert numerators, where $\Lambda$ is a finite index set. Define a partial order on $\Lambda$ by $\mu < \lambda$ if $H_{\mu} < H_{\lambda}$, that is, if $H_{\mu}(x)<H_{\lambda}(x)$ for all $0<x<1$. For $\lambda \in \Lambda$, let $Y_{\bd}^{\lambda}$ be the locus in $Y_{\bd}$ where the corresponding ideal class has Hilbert numerator $H_{\lambda}$. Let $Y_{\bd,n}^{\lambda}=Y_{\bd}^{\lambda} \cap Y_{\bd,n}$.

\begin{proposition} \label{prop:strat}
For any $\lambda \in \Lambda$, the set $\bigcup_{\mu \ge \lambda} Y^{\mu}_{\bd}$ is closed in $Y_{\bd}$.
\end{proposition}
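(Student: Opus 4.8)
The plan is to reduce the statement to a fibrewise assertion at each finite level $X^{\circ}_{\bd,n}$, and then use Proposition~\ref{prop:order} to replace the polynomial inequality cutting out $\bigcup_{\mu\ge\lambda}Y^{\mu}_{\bd}$ by a countable conjunction of manifestly Zariski‑closed rank conditions. First I would note that $\pi^{-1}\big(\bigcup_{\mu\ge\lambda}Y^{\mu}_{\bd}\big)$ is $\GL$‑stable, so by the definition of the topology on $Y_{\bd}$ together with Proposition~\ref{prop:top2} (equivalently Corollary~\ref{cor:conjtop}) it is closed as soon as its intersection with each $X^{\circ}_{\bd,n}$ is Zariski closed. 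Unwinding the definition of $Y^{\mu}_{\bd}$ and using cone‑stability of the Hilbert numerator (so $\HN_{A/I_{\ff}}=\HN_{A_n/I_{\ff,n}}$), this intersection is
\[
W_n=\{\,\ff\in X^{\circ}_{\bd,n}\;:\; 0\le \HN_{A_n/I_{\ff,n}}(t)-H_\lambda(t)\,\},
\]
where $\le$ is the order on $\bR[t]$ from \S\ref{ss:order}; so it remains to prove that $W_n$ is Zariski closed for each fixed $n$.

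Fix $n$. For each $\mu\ge\lambda$ we have $0\le H_\mu-H_\lambda$, so Proposition~\ref{prop:order}(c) supplies an integer $N_\mu$ with $0\preceq (1-t)^{-M}(H_\mu-H_\lambda)$ for all $M\ge N_\mu$. Because $\Lambda$ is finite (Theorem~\ref{thm:hilbfin}) I can choose a single integer $N\ge n$ with $N\ge N_\mu$ for every $\mu\ge\lambda$, and then for any $\mu\in\Lambda$ one has $\mu\ge\lambda$ if and only if $0\preceq (1-t)^{-N}(H_\mu-H_\lambda)$ — the forward direction by the choice of $N$, the reverse by Proposition~\ref{prop:order}, (d)$\Rightarrow$(a). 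Since $\HN_{A_n/I_{\ff,n}}$ is always one of the $H_\mu$, and since $N\ge n$ gives $(1-t)^{-N}\HN_{A_n/I_{\ff,n}}(t)=(1-t)^{-(N-n)}\rH_{A_n/I_{\ff,n}}(t)=\rH_{A_N/I_{\ff,N}}(t)$ (adjoin $N-n$ dummy variables; here $I_{\ff,N}$ is the ideal of $A_N$ generated by $f_1,\dots,f_r$), this shows
\[
W_n=\{\,\ff\in X^{\circ}_{\bd,n}\;:\; (1-t)^{-N}H_\lambda(t)\preceq \rH_{A_N/I_{\ff,N}}(t)\,\}.
\]

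Finally I would write $(1-t)^{-N}H_\lambda(t)=\sum_{j\ge0}c_j t^j$ with $c_j\in\bZ$ fixed, so that $W_n=\bigcap_{j\ge0}W_{n,j}$ with $W_{n,j}=\{\ff:\dim_{\bk}(A_N/I_{\ff,N})_j\ge c_j\}$. For each $j$, $\dim_{\bk}(A_N/I_{\ff,N})_j=\dim_{\bk}(A_N)_j-\operatorname{rank} m_{\ff,j}$, where $m_{\ff,j}\colon\bigoplus_{i=1}^r (A_N)_{j-d_i}\to(A_N)_j$ is multiplication by $(f_1,\dots,f_r)$; the matrix of $m_{\ff,j}$ in the monomial bases has entries that are coordinate functions on $X_{\bd,n}$ (or $0$), so $W_{n,j}$ is the locus where $\operatorname{rank} m_{\ff,j}\le \dim_{\bk}(A_N)_j-c_j$, which is Zariski closed (vanishing of the appropriate minors; vacuous if $c_j\le 0$). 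Hence $W_n$ is an intersection of Zariski closed sets, and the proposition follows.

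The step I expect to be the main obstacle is the passage from ``$H_\mu\ge H_\lambda$ on $(0,1)$'', which is not visibly a closed condition, to a coefficientwise inequality of honest Hilbert series; this is exactly what Proposition~\ref{prop:order} buys us, and it is essential that there are only finitely many type $\bd$ Hilbert numerators (Theorem~\ref{thm:hilbfin}) so that a single exponent $N$ works uniformly for the whole order ideal $\{\mu:\mu\ge\lambda\}$. The remaining ingredients — semicontinuity of ranks of matrices with polynomial entries, and the bookkeeping that $(1-t)^{-N}\HN$ becomes a genuine Hilbert series once $N\ge n$ — are routine.
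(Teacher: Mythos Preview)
Your argument is correct and follows essentially the same route as the paper's proof: both reduce to a finite level, invoke Theorem~\ref{thm:hilbfin} and Proposition~\ref{prop:order} to trade the polynomial order $\le$ for the coefficientwise order $\preceq$ on genuine Hilbert series, and then appeal to semicontinuity of Hilbert functions. The only organizational differences are that the paper fixes a single threshold $N$ and checks closedness of $Z_n$ for $n\ge N$ (which suffices since $X^{\circ}_{\bd,m}\subset X^{\circ}_{\bd,n}$ is closed for $m\le n$), whereas you fix $n$ first and pass up to $A_N$ with $N\ge n$; and the paper cites ``the usual semi-continuity property of Hilbert series'' where you unpack this as rank conditions on the multiplication maps $m_{\ff,j}$. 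Neither difference is substantive.
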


\begin{proof}
By Corollary~\ref{cor:X*ind} and the definition of the topology on $Y_{\bd,n}$, it suffices to show that $Z_n=\bigcup_{\mu \ge \lambda} \pi_n^{-1}(Y^{\mu}_{\bd,n})$ is closed in $X_{\bd,n}^{\circ}$ for all $n \gg 0$. Let $N$ be such that for any $\lambda,\mu \in \Lambda$ and $n \ge N$ we have $\lambda \le \mu$ if and only if $(1-t)^{-n} H_{\lambda}(t) \preceq (1-t)^{-n} H_{\mu}(t)$; this exists by Proposition~\ref{prop:order} and the fact that $\Lambda$ is finite. Let $n\geq N$ and let $\ff$ be a point of $X^{\circ}_{\bd,n}$. Then $\ff \in Z_n$ if and only if $H_{\lambda}(t) \le \HN_{B_{\ff}}(t)$, which in turn is equivalent to $(1-t)^{-n} H_{\lambda}(t) \preceq (1-t)^{-n} \HN_{B_{\ff}}(t)$. Since $(1-t)^{-n} \HN_{B_{\ff}}(t)$ is the Hilbert series $\rH_{B_{\ff,n}}(t)=\rH_{\cB_{\bd,n}(\ff)}(t)$, we see
\begin{displaymath}
Z_n=\{ \ff \in X_{\bd,n}^{\circ} \mid (1-t)^{-n} H_{\lambda}(t) \preceq \rH_{\cB_{\bd,n}(\ff)}(t) \},
\end{displaymath}
which is closed by the usual semi-continuity property of Hilbert series.
\end{proof}

\begin{corollary} \label{cor:locclosed}
Each $Y^{\lambda}_{\bd}$ is locally closed in $Y_{\bd}$.
\end{corollary}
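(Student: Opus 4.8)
The plan is to exhibit $Y^{\lambda}_{\bd}$ as the set-theoretic difference of two closed subsets of $Y_{\bd}$; since any such difference is the intersection of a closed set with an open set, this gives local closedness.

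First I would note that, as the Hilbert numerator is cone-stable, every point of $Y_{\bd}$ carries a well-defined type $\bd$ Hilbert numerator, so the loci $Y^{\mu}_{\bd}$, $\mu \in \Lambda$, partition $Y_{\bd}$ (here one uses that the parametrization $\lambda \mapsto H_{\lambda}$ is injective, which is implicit in the definition of the partial order on $\Lambda$). It follows immediately that a point lies in $Y^{\lambda}_{\bd}$ precisely when it lies in $\bigcup_{\mu \ge \lambda} Y^{\mu}_{\bd}$ but not in $\bigcup_{\mu > \lambda} Y^{\mu}_{\bd}$, i.e.
\[
Y^{\lambda}_{\bd} \;=\; \Bigl( \bigcup_{\mu \ge \lambda} Y^{\mu}_{\bd} \Bigr) \;\setminus\; \Bigl( \bigcup_{\mu > \lambda} Y^{\mu}_{\bd} \Bigr).
\]

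The subtracted set is closed: by transitivity of $<$ one has $\bigcup_{\mu > \lambda} Y^{\mu}_{\bd} = \bigcup_{\nu > \lambda} \bigl( \bigcup_{\mu \ge \nu} Y^{\mu}_{\bd} \bigr)$ (for ``$\supseteq$'' use $\mu \ge \nu > \lambda \Rightarrow \mu > \lambda$; for ``$\subseteq$'' take $\nu = \mu$), and since $\Lambda$ is finite this is a finite union of sets each closed by Proposition~\ref{prop:strat}. The larger set $\bigcup_{\mu \ge \lambda} Y^{\mu}_{\bd}$ is closed directly by Proposition~\ref{prop:strat}. Hence $Y^{\lambda}_{\bd}$ is a difference of closed sets, as desired.

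I do not expect any genuine obstacle: the statement is a formal corollary of Proposition~\ref{prop:strat} together with the finiteness of $\Lambda$. The only step meriting a moment's care is the bookkeeping identifying the strict upper set $\bigcup_{\mu > \lambda} Y^{\mu}_{\bd}$ with a finite union of the closed upper sets supplied by Proposition~\ref{prop:strat}.
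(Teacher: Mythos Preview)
Your argument is correct and is exactly the intended one: the paper states the corollary immediately after Proposition~\ref{prop:strat} without proof, and the standard way to extract it is precisely the difference-of-closed-sets decomposition you wrote down. The only ingredients are Proposition~\ref{prop:strat}, the finiteness of $\Lambda$ (Theorem~\ref{thm:hilbfin}), and the transitivity of the order, all of which you invoke correctly.
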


Let $\ol{X}^{\lambda}_{\bd,n}$ be the closure of $\bigcup_{\mu \ge \lambda} \pi_n^{-1}(Y^{\mu}_{\bd,n})$ in $X_{\bd,n}$, endowed with the reduced subscheme structure.  Let $X^{\lambda}_{\bd,n}$ be the complement of $\bigcup_{\mu>\lambda} \ol{X}^{\mu}_{\bd,n}$ in $\ol{X}^{\lambda}_{\bd,n}$, considered as an open subscheme of $\ol{X}^{\lambda}_{\bd,n}$. The set $X^{\lambda,\circ}_{\bd,n}$ of closed points in $X^{\lambda}_{\bd,n}$ is exactly $\pi_n^{-1}(Y^{\lambda}_{\bd,n})$.

For simplicity, we have only defined $Y_{\bd}$ as a topological space (as opposed to an ind-stack). To make sense of the flatness statement from Theorem~\ref{thm:strat}, we thus pass to the cover $X_{\bd}$, and interpret Theorem~\ref{thm:strat} as the following concrete statement:

\begin{proposition} \label{prop:flatstrat}
The restriction of $\cB_{\bd,n}$ to $X=X^{\lambda}_{\bd,n}$ is $\cO_X$-flat.
\end{proposition}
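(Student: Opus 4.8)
The plan is to show that $\cB_{\bd,n}$ restricted to $X = X^\lambda_{\bd,n}$ is $\cO_X$-flat by invoking the standard criterion that a finitely generated graded module over $\cO_X \otimes A_n$ whose fibers all have the same Hilbert function is flat (e.g.\ \cite[Theorem~III.9.9]{hartshorne} together with the observation that a coherent sheaf with locally constant Euler characteristic on a reduced base is flat, applied degree by degree). Since $X$ is reduced by construction, it suffices to prove that the Hilbert function $d \mapsto \dim_{\bk(\ff)} \cB_{\bd,n}(\ff)_d$ is constant as $\ff$ ranges over the (closed) points of $X$; equivalently, that every closed point $\ff$ of $X^\lambda_{\bd,n}$ generates an ideal $I_{\ff,n}$ with $\HN_{B_{\ff,n}}(t) = H_\lambda(t)$, and that this forces constancy of the Hilbert series.

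First I would recall that $X^{\lambda,\circ}_{\bd,n} = \pi_n^{-1}(Y^\lambda_{\bd,n})$, which is exactly the set of closed points $\ff$ for which the corresponding ideal $I_{\ff}$ of $A$ lies in the stratum $Y^\lambda_{\bd}$, i.e.\ has Hilbert numerator $H_\lambda$. By cone-stability of the Hilbert numerator, $\HN_{B_{\ff,n}}(t) = \HN_{B_\ff}(t) = H_\lambda(t)$ for all such $\ff$. Hence the Hilbert series $\rH_{B_{\ff,n}}(t) = (1-t)^{-n} H_\lambda(t)$ is literally independent of $\ff \in X^{\lambda,\circ}_{\bd,n}$. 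In particular, for each fixed degree $d$, the function $\ff \mapsto \dim_\bk (B_{\ff,n})_d = \dim_\bk \cB_{\bd,n}(\ff)_d$ is constant on closed points of $X$.

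Next I would upgrade this pointwise statement to flatness. Fix a degree $d$ and consider the coherent $\cO_X$-module $\cF_d := (\cB_{\bd,n})_d$, the degree-$d$ graded piece of the restriction of $\cB_{\bd,n}$ to $X$. It is a quotient of the free $\cO_X$-module $(\cA_{\bd,n})_d|_X$ of rank $\binom{n+d-1}{d}$, and its fiber at a closed point $\ff$ is $(B_{\ff,n})_d$. By semicontinuity of fiber dimension and the fact (just established) that $\dim \cF_d(\ff)$ is constant over $X$, together with the reducedness of $X$, the sheaf $\cF_d$ is locally free of that constant rank (this is the standard "constant fiber dimension over a reduced scheme implies locally free" statement). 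Since $\cB_{\bd,n}|_X = \bigoplus_d \cF_d$ as $\cO_X$-modules and each summand is locally free, $\cB_{\bd,n}|_X$ is $\cO_X$-flat. Finally, one checks that this local freeness in each degree is precisely the flatness asserted in Theorem~\ref{thm:strat}: flatness of a graded module over a base is equivalent to flatness (here, local freeness) of each graded component.

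The main obstacle is not any single hard estimate but rather making sure the reducedness hypothesis is genuinely used and sufficient: over a non-reduced base, constant fiber dimension does not imply flatness, so the key point is that $X^\lambda_{\bd,n}$ was defined with the reduced structure (as the complement inside $\ol{X}^\lambda_{\bd,n}$, which carries the reduced subscheme structure). Once that is in place, everything reduces to the classical flatness-via-Hilbert-function criterion applied degree by degree, with the input $\HN_{B_\ff} = H_\lambda$ coming for free from the definition of the stratum and cone-stability of the Hilbert numerator. A minor care point is that $X$ need not be noetherian, so one should phrase the local-freeness argument on each affine piece $X_{\bd,n}$ (which is of finite type over $\bk$) and use that $X^\lambda_{\bd,n}$ is a subscheme of the finite-type scheme $X_{\bd,n}$, not of the ind-scheme $X_{\bd}$.
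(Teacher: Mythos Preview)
Your proposal is correct and follows essentially the same route as the paper: establish that the Hilbert series of $B_{\ff,n}$ is the constant $(1-t)^{-n}H_\lambda(t)$ on $X^{\lambda}_{\bd,n}$, then deduce that each graded piece of $\cB_{\bd,n}$ is a coherent sheaf with constant fiber dimension on closed points, hence locally free over the reduced, finite-type base. The only cosmetic difference is that the paper spells out the passage from closed points to all points via semi-continuity, whereas you package it as the standard ``constant fiber dimension over reduced base'' criterion; your final worry about noetherianity is unnecessary, since $X^{\lambda}_{\bd,n}$ is by definition a locally closed subscheme of the finite-type affine scheme $X_{\bd,n}$.
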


\begin{proof}
For $\ff \in X$ we have $\rH_{\cB_{\bd,n}(\ff)}(t)=\rH_{B_{\ff,n}}(t)=(1-t)^{-n} H_{\lambda}(t)$. Thus the Hilbert series of the fiber of $\cB_{\bd,n}$ is constant on $X$. Let $\cF$ be one of the graded pieces of $\cB_{\bd,n}$. Then $\cF$ is a coherent sheaf on $X$ whose fiber at all closed points has the same dimension, say dimension $d$. By semi-continuity of fiber dimension, the locus of (not necessarily closed) points where the fiber dimension is $\ne d$ is the union of a closed set (where the dimension is $>d$) and an open set (where the dimension is $<d$).  Yet this locus contains no closed points, and since $X$ is of finite type over a field, this implies that the fiber of $\cF$ has the same dimension on the non-closed points as well, which in turn implies that $\cF$ is locally free \cite[Ex.~20.14(b)]{eisenbud}.
\end{proof}

\begin{corollary} \label{cor:flatstrat}
Let $\nu$ be a cone-stable weakly upper semi-continuous ideal invariant, and let $n \in \bZ$ be given. Then $Z=\{ I \in Y^{\lambda}_{\bd} \mid \nu(I) \ge n \}$ is a closed subset of $Y^{\lambda}_{\bd}$.
\end{corollary}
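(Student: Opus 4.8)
The plan is to reduce the statement to a single level $n$, realize the relevant locus as the closed points of a flattening-type family, and then invoke weak upper semi-continuity for that family.

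\emph{Reduction to finite level.} Since the topology on $Y^{\lambda}_{\bd}$ is induced from $Y_{\bd}$, and closedness of a subset of $Y_{\bd}$ is detected after pulling back along the maps $\pi_n$ to the spaces $X^{\circ}_{\bd,n}$ (using Corollary~\ref{cor:X*ind} together with the definitions of the topologies on $Y_{\bd}$ and the $Y_{\bd,n}$), it will suffice to show that $\pi_n^{-1}(Z \cap Y^{\lambda}_{\bd,n})$ is closed in $X^{\lambda,\circ}_{\bd,n}$ for every $n$; recall from just before Proposition~\ref{prop:flatstrat} that $X^{\lambda,\circ}_{\bd,n} = \pi_n^{-1}(Y^{\lambda}_{\bd,n})$. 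By cone-stability one has $\nu(I_{\ff}) = \nu(I_{\ff,n})$ for every $\bk$-point $\ff$ of $X_{\bd,n}$, and $I_{\ff,n}$ is exactly the fiber of the ideal sheaf $\cI_{\bd,n}$ at $\ff$; so $\pi_n^{-1}(Z \cap Y^{\lambda}_{\bd,n}) = \{ \ff \in X^{\lambda,\circ}_{\bd,n} \mid \nu(I_{\ff,n}) \ge n \}$.

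\emph{Applying weak upper semi-continuity.} I would then take $S = X^{\lambda}_{\bd,n}$ (a reduced scheme of finite type over $\bk$), the polynomial ring $A_n$, and the homogeneous ideal sheaf $\cI_{\bd,n}|_S$ of $\cA_{\bd,n}|_S = \cO_S \otimes_{\bk} A_n$. Proposition~\ref{prop:flatstrat} says precisely that the quotient $\cB_{\bd,n}|_S$ is $\cO_S$-flat, so this family satisfies the hypotheses in the definition of weak upper semi-continuity. Hence the locus of geometric points of $S$ at which $\nu$ of the fiber is $\ge n$ is Zariski-closed, and this locus cuts out a closed subset $W \subseteq X^{\lambda}_{\bd,n}$ whose set of closed points is exactly $\{ \ff \in X^{\lambda,\circ}_{\bd,n} \mid \nu(I_{\ff,n}) \ge n\}$. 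Combined with the previous step this gives $\pi_n^{-1}(Z \cap Y^{\lambda}_{\bd,n}) = W \cap X^{\lambda,\circ}_{\bd,n}$, which is closed, finishing the reduction and the proof.

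\emph{Anticipated obstacle.} The genuinely delicate point is the topological bookkeeping in the first step: $Y^{\lambda}_{\bd}$ is only locally closed in $Y_{\bd}$ (Corollary~\ref{cor:locclosed}), and direct limits interact with passage to subspaces only with care, so the reduction of ``closed in $Y^{\lambda}_{\bd}$'' to ``closed in $X^{\lambda,\circ}_{\bd,n}$ for all $n$'' needs to be carried out cleanly --- the safe route is to run the whole argument on the covers $X^{\circ}_{\bd,n}$ rather than on the $Y_{\bd,n}$ themselves. A second, more cosmetic point is reconciling the ``geometric points'' used in the definition of weak upper semi-continuity with the ``closed points'' that make up $X^{\lambda,\circ}_{\bd,n}$; this is harmless because a closed subset of a scheme of finite type over $\bk$ is determined by its set of closed points.
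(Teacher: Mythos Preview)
Your proposal is correct and follows essentially the same approach as the paper's proof: reduce closedness to each finite level $X^{\lambda,\circ}_{\bd,n}$, identify the preimage as $\{ \ff \mid \nu(I_{\ff,n}) \ge n\}$ via cone-stability, and then invoke Proposition~\ref{prop:flatstrat} together with weak upper semi-continuity. You are more explicit than the paper about the topological reduction and the closed-versus-geometric-points issue, but the argument is the same (and you inherit the paper's harmless overloading of the symbol $n$ for both the threshold and the level index).
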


\begin{proof}
It suffices to show that $Z'=\pi_n^{-1}(Z \cap Y_{\bd,n})$ is a closed subset of $X_{\bd,n}^{\lambda,\circ}$ for all $n$. We have $Z'=\{ \ff \in X_{\bd,n}^{\lambda,\circ} \mid \nu(I_{\ff,n}) \ge n\}$. This is closed, since $\cB_{\bd,n}$ is flat over $X_{\bd,n}^{\lambda}$ and $\nu$ is weakly upper semi-continuous.
\end{proof}

\subsection{Proof of Theorem~\ref{thm:invariants}} \label{sec:cone stable}

Fix a cone-stable weakly upper semi-continuous ideal invariant $\nu$, and let $\bd$ be given. Let $Z_k \subset Y_{\bd}$ be the locus defined by $\nu \ge k$. Observe that $Z_k \cap Y_{\bd}^{\lambda}$ is closed in $Y_{\bd}^{\lambda}$ by Corollary~\ref{cor:flatstrat}, and that the space $Y_{\bd}^{\lambda}$ is noetherian, being a subspace of the noetherian space $Y_{\bd}$ (see Theorem~\ref{thm:noeth}). We thus see that the descending chain $Z_{\bullet} \cap Y_{\bd}^{\lambda}$ stabilizes. Since there are only finitely many $\lambda$, it follows that the chain $Z_{\bullet}$ stabilizes. Let $N$ be such that $Z_k=Z_N$ for all $k \ge N$. We thus see that $\nu \ge N$ implies $\nu \ge k$ for all $k \ge N$; thus $\nu \ge N$ implies $\nu=\infty$. We therefore find that $\nu<N$ or $\nu=\infty$ holds at all points in $Y_{\bd}$, and so $\nu$ is bounded in degree $\bd$. \hfill \qedsymbol

\begin{remark}\label{remark:strat compare}
The stratification of $X_{\bd}$ differs from the stratification produced in \cite[Theorem~5.13]{ess-stillman} in two ways.  First, each $\bk$-point of $X_{\bd}$ corresponds to a tuple $f_1,\dots,f_r$ in a polynomial ring $\bk[x_1,x_2,\dots,x_n]$ for some $n$, whereas \cite{ess-stillman} allows ``infinite polynomials'', i.e., tuples which lie in the graded inverse limit ring.  Second, since the Betti table determines the Hilbert numerator, the stratification in \cite{ess-stillman} refines the above stratification of $X_{\bd}$.
\end{remark}

\section{Examples of ideal invariants} \label{s:examples}

\subsection{The number of linear subspaces in a variety}\label{ss:fano}
In this section we work over an algebraically closed field $\bk$.  A smooth cubic surface in $\bP^3$ contains exactly $27$ lines.  An arbitrary cubic surface $Y\subset \bP^3$ can contain fewer than $27$ lines or it can contain an infinite number of lines (e.g., if $Y$ is reducible); but if $Y$ contains a {\em finite} number of lines, then it contains at most $27$ lines~\cite[Theorem~9.48]{milne}. In this section, we prove a sort of generalization of this.

Fix a non-negative integer $c$. Let $\Gr_c(\bk^n)$ be the Grassmannian of codimension $c$ subspaces of $\bk^n$. For a homogeneous ideal $I \subset A_n$, let $\fU_I$ be the closed subscheme of $\Gr_c(\bk^n)$ whose $T$-points are those families of subspaces of $\bk^n$ scheme-theoretically contained in $T \times \Spec(A_n/I)$. Thus $\fU_I(\bk)$ is exactly the set of subspaces of $\bk^n$ of codimension $c$ contained in $V(I)$. We say that a point $x$ of $\fU_I(\bk)$ is {\bf rigid} if the Zariski tangent space to $\fU_I$ at $x$ vanishes. Define an ideal invariant $\nu$ as follows: $\nu(I)=\infty$ if $\fU_I$ contains a non-rigid point; otherwise, $\nu(I)=\# \fU_I(\bk)$. We note that $\nu(I)<\infty$ if and only if $\fU_I$ is \'etale over $\bk$.

\begin{proposition}\label{prop:fano}
The ideal invariant $\nu$ is degreewise bounded.
\end{proposition}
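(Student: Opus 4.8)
The plan is to realize $\nu$ as a cone-stable, weakly upper semi-continuous ideal invariant and then invoke Theorem~\ref{thm:invariants}. First I would check cone-stability: if we adjoin a variable $x_{n+1}$, then $V(I[x_{n+1}]) \subset \bP^n$ is a cone over $V(I) \subset \bP^{n-1}$, and a codimension $c$ linear subspace $L \subset \bP^n$ contained in this cone... wait — this needs care, since a line in the cone need not be a cone over a point. The cleaner route is to note that $\fU_{I[x]}$ and $\fU_I$ are isomorphic as schemes: both parametrize the same objects because the Grassmannian $\Gr_c(\bk^{n+1})$ subscheme cutting out containment in $V(I[x])$ is pulled back appropriately, and rigidity and point-count are preserved. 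I would spell this out by identifying $\fU_{I[x]}$ with a suitable relative Grassmannian bundle over $\fU_I$ whose fibers are single reduced points exactly when $\fU_I$ is étale — or, more simply, by directly checking that the functor of points of $\fU_{I[x]}$ agrees with that of $\fU_I$ after the obvious identification, so $\nu(I[x]) = \nu(I)$.

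Next I would establish weak upper semi-continuity. Given a variety $S$, a polynomial ring $A_n$, and a homogeneous ideal sheaf $\cI$ of $\cA = \cO_S \otimes A_n$ with $\cA/\cI$ flat over $\cO_S$, I want to build a relative version $\fU_{\cI} \to S$ of the scheme $\fU$, i.e., the closed subscheme of $S \times \Gr_c(\bk^n)$ whose $T$-points (for $T \to S$) are families of codimension $c$ subspaces scheme-theoretically contained in $T \times_S \Spec(\cA/\cI)$. This is a closed subscheme of $S \times \Gr_c(\bk^n)$ cut out by the vanishing of the composite $\cI \hookrightarrow \cA \to \cA/(\text{ideal of the universal subspace})$, and its fiber over a geometric point $s$ is $\fU_{\cI_s}$. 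Then $\nu(\cI_s) < \infty$ exactly when $\fU_{\cI} \to S$ is étale over $s$, and when finite, $\nu(\cI_s)$ equals the degree of the fiber. The locus in $S$ where $\fU_{\cI} \to S$ fails to be finite and étale of degree $< k$ — equivalently, where $\fU_{\cI} \to S$ has a fiber that is either non-reduced, positive-dimensional, or of length $\ge k$ — is closed: non-finiteness (positive fiber dimension) is closed by upper semi-continuity of fiber dimension; and on the open locus where $\fU_{\cI}\to S$ is finite, the fiber length is upper semi-continuous, so $\{s : \deg \fU_{\cI,s} \ge k\}$ is closed in that open locus, and its closure in $S$ meets the complementary closed locus (where fibers are positive-dimensional) — but on that complementary locus $\nu = \infty \ge k$ automatically, so $\{s : \nu(\cI_s) \ge k\}$ is closed in all of $S$. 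The remaining subtlety is that "$\nu(I) \ge k$" as I've defined it via $\nu(I) = \infty$ or $\#\fU_I(\bk)$ must be matched correctly with "fiber of length $\ge k$ or positive-dimensional or non-reduced"; since a non-reduced finite fiber forces a non-rigid point, hence $\nu = \infty$, this matching is exact.

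The main obstacle I anticipate is the bookkeeping in the semi-continuity argument — specifically, ensuring that the "bad" locus (non-rigid point, or infinitely many subspaces) is genuinely Zariski-closed rather than merely constructible, and handling the interplay between the open stratum where $\fU_\cI \to S$ is finite and the closed stratum where it is not. The key point is that $\nu = \infty$ on the entire non-finite locus, so one never needs to take closures of the "length $\ge k$" locus past that boundary. Once cone-stability and weak upper semi-continuity are in hand, Theorem~\ref{thm:invariants} immediately gives that $\nu$ is bounded in degree $\bd$ for every $\bd$, i.e., degreewise bounded. As a sanity check, for $\bd = (3)$ and $c = 1$ in $\bP^3$ this recovers the classical statement that a cubic surface with finitely many lines has at most $27$ of them, with the bound now made uniform in the (irrelevant) ambient dimension.
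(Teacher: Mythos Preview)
Your overall strategy matches the paper's: verify cone-stability and weak upper semi-continuity, then invoke Theorem~\ref{thm:invariants}. But the cone-stability step has a real gap. The claim that the functors of points of $\fU_{I[x]}$ and $\fU_I$ agree ``after the obvious identification'' is false: for $I=0\subset A_n$ with $n>c$ one gets $\fU_I=\Gr_c(\bk^n)$ and $\fU_{I[x]}=\Gr_c(\bk^{n+1})$, which are not isomorphic. There is only a closed immersion $\fU_I\hookrightarrow\fU_{I[x]}$ via $U\mapsto U\times\bA^1$, and showing it is a bijection on $\bk$-points requires both that $\fU_I$ be finite \emph{and} that $n>c$: if $U'\subset V(I[x])$ has codimension $c$ in $\bk^{n+1}$, its projection to $\bk^n$ has codimension $c$ or $c-1$; ruling out the latter uses that the projection would then have dimension $\ge 2$ (this is exactly where $n>c$ enters) and hence contain infinitely many codimension-$c$ subspaces, contradicting finiteness of $\fU_I$. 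Upgrading the bijection to a scheme isomorphism then needs a separate tangent-space computation. Your ``relative Grassmannian bundle'' alternative also does not work, since there is no natural map $\fU_{I[x]}\to\fU_I$. Finally, $\nu$ is genuinely not cone-stable for $n\le c$, so the paper handles that range by the trivial bound $\nu(I)\le 1$ and only applies (the proof of) Theorem~\ref{thm:invariants} for $n>c$; you would need to do the same.

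Your semi-continuity argument is closer to correct but also has a gap. On the open locus where $\fU_{\cI}\to S$ has finite fibers, you show that $\{s:\operatorname{length}(\fU_{\cI_s})\ge k\}$ is closed, but what you need is $\{s:\fU_{\cI_s}\text{ non-reduced or of length}\ge k\}$; a non-reduced fiber of length $<k$ (e.g.\ a double point in a degenerating Fano scheme) lies in the second set but not the first, yet has $\nu=\infty\ge k$. The paper avoids this by first proving separately (Lemma~\ref{lem:fano}) that the locus where the fiber is \'etale is open, removing its complement, and only then invoking semi-continuity of length on what is now a finite morphism.
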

\addtocounter{equation}{-1}
\begin{subequations}
\begin{proof}
We first show that $\nu$ is ``eventually'' cone-stable. Let $I \subseteq A_n$ with $n>c$ and let $I'=IA_{n+1}$. We show $\nu(I)=\nu(I')$. We have $V(I')=V(I) \times \bA^1$, and so we have a map
\begin{equation} \label{eq:fano}
\fU_I \to \fU_{I'}, \qquad U \mapsto U \times \bA^1.
\end{equation}
This map is clearly a closed immersion. We claim that if $\fU_I$ is finite over $\bk$ then this is an isomorphism on $\bk$-points. Let $U' \in \fU_{I'}(\bk)$, and let $U$ be its projection to $\bA^n$. Then $U$ is a linear subspace of $V(I) \subset \bA^n$ containing the origin. If $U$ has codimension $c$ then $U'=U \times \bA^1$, and so $U'$ is in the image of \eqref{eq:fano}. If not, $U$ has codimension strictly less than $c$, and thus dimension at least~2 (since $c<n$), and it must contain an infinite number of linear subspaces of codimension $c$ in $\bA^n$, which contradicts the finiteness of $\fU_I$.

If $\nu(I)=\infty$ then $\fU_I$ is not \'etale, and so $\fU_{I'}$ is not \'etale, and so $\nu(I')=\infty$. Suppose now that $\nu(I)$ is finite. Then \eqref{eq:fano} is an isomorphism on $\bk$-points. A $\bk$-point $x$ of $\fU_I$ corresponds to a surjection $B=A_n/I \to \bk[t_1, \ldots, t_{n-c}]=C$ of graded rings. The Zariski cotangent space of $x \in \fU_I$ is identified with $\Hom_B(J/J^2, C)$, where $J$ is the kernel of $B \to C$, and the maps are taken as graded $B$-modules. Since $\nu(I)$ is finite, this $\Hom$ space therefore vanishes. It follows that $\Hom_{B[t_{n+1}]}(J/J^2[t_{n+1}], C[t_{n+1}])$ also vanishes, which is the cotangent space of the image of $x$ under \eqref{eq:fano}. We thus see that $\fU_{I'}$ is \'etale, and so \eqref{eq:fano} is an isomorphism of schemes. Thus $\nu(I)=\nu(I')$.

We now prove weak semi-continuity. Suppose that $\cI \subset \cO_S \otimes A_n$ is a family of ideals over $S$, flat over $S$, and write $\cI_s$ for the ideal of $A_n$ at $s \in S(\bk)$. The construction of $\fU$ works in families: we have a scheme $\fU_{\cI}$ over $S$ whose fiber at $s \in S(\bk)$ is $\fU_{\cI_s}$. Since $\fU_{\cI}$ is a closed subscheme of the Grassmannian, it is proper over $S$.

By Lemma~\ref{lem:fano} below, the set of points $s \in S(\bk)$ at which $(\fU_{\cI})_s$ is \'etale is open. We thus see that the locus $\nu \ge \infty$ is closed.

We now show $\nu \ge k$ is closed, for $k<\infty$. Since the locus $\nu \ge \infty$ is closed, we can discard it from $S$. Thus we can assume $(\fU_{\cI})_s$ is \'etale over $\bk$ for all $s \in S(\bk)$. Since $\fU_{\cI}$ is also quasi-finite over $S$, it is finite over $S$. It follows that the fibral dimension of $\cO_{\fU_{\cI}}$, which is just $\nu$, is upper semi-continuous.

The proof of Theorem~\ref{thm:invariants} shows that $I \mapsto \nu(I)$ is degreewise bounded for ideals $I$ in $A_n$ with $n>c$. For $n \le c$ we have $\nu(I) \le 1$. Thus $\nu$ is degreewise bounded.
\end{proof}
\end{subequations}

\begin{lemma} \label{lem:fano}
Let $X \to S$ be a flat, proper map of schemes, with $S/\bk$ of finite type. Then the locus of points $s \in S(\bk)$ where $X_s$ is \'etale over $\bk$ is open.
\end{lemma}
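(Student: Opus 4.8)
The plan is to reduce the statement to the standard fact that flatness plus unramifiedness is an open condition, combined with a properness argument to propagate the condition from a fiber to a neighborhood of that fiber. Recall that for a $\bk$-point $s$, the fiber $X_s$ is étale over $\bk$ precisely when $X_s$ is finite over $\bk$ and reduced, equivalently when $X \to S$ is unramified at every point lying over $s$ \emph{and} no point of $X_s$ has positive-dimensional local ring — but since $X \to S$ is proper, quasi-finiteness at the fiber already upgrades to finiteness in a neighborhood, so the real content is the unramified (equivalently, étale, since we have flatness) locus.

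First I would fix $s_0 \in S(\bk)$ with $X_{s_0}$ étale over $\bk$ and produce an open neighborhood $U \ni s_0$ with $X_u$ étale for all $u \in U(\bk)$. The locus $V \subseteq X$ where $X \to S$ is unramified is open (unramifiedness is an open condition on the source, \cite[Ex.~6.8]{...} — I will instead cite that the non-smooth/non-étale locus is closed, or argue via the vanishing of $\Omega_{X/S}$, which is a coherent sheaf whose support is closed). Since $X_{s_0}$ is étale, $X_{s_0} \subseteq V$, so the closed set $X \setminus V$ is disjoint from the fiber over $s_0$. Now invoke properness: the image $\pi(X \setminus V)$ is closed in $S$ and does not contain $s_0$, so $U_1 = S \setminus \pi(X \setminus V)$ is an open neighborhood of $s_0$ over which $X \to S$ is everywhere unramified, hence (with flatness) étale. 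Over $U_1$, each fiber $X_u$ is thus étale as a $\bk(u)$-scheme, and in particular $X_u$ is étale over $\bk$ for $u \in U_1(\bk)$.

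The one point that requires a little care — and which I expect to be the main (minor) obstacle — is the finiteness: a priori $X_u$ for $u \in U_1(\bk)$ is unramified and flat over $\bk$ but could be infinite (an infinite disjoint union of points). Here properness saves the day again: $X \to S$ restricted to $U_1$ is proper and unramified, hence quasi-finite, hence finite; so each geometric fiber is a finite reduced $\bk$-scheme, i.e., étale over $\bk$ in the sense used in the paper. Taking the union of such neighborhoods $U_1$ over all good points $s_0$ exhibits the étale locus as open, completing the proof. I should double-check that the notion of ``$X_s$ étale over $\bk$'' in the application (where $\bk$ is algebraically closed and we only test $\bk$-points) matches ``$X_s$ is a finite reduced scheme'', which it does, so no subtlety about non-perfect base fields arises.
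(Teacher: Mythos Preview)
Your proof is correct, but it takes a different route from the paper's. The paper observes that, since $\bk$ is algebraically closed (a standing assumption in \S\ref{ss:fano}), the fiber $X_s$ is \'etale over $\bk$ exactly when it is zero-dimensional and reduced; it then simply cites two Stacks Project tags (0D4I and 0C0E) to the effect that each of these is an open condition on $s$, and takes the intersection.

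Your argument instead works on the total space: you use that the unramified locus in $X$ is open (via the coherent sheaf $\Omega_{X/S}$), then push its closed complement down to $S$ using properness, and finally appeal to ``proper $+$ quasi-finite $\Rightarrow$ finite'' to rule out infinite discrete fibers. This is a standard and perfectly valid technique. Compared to the paper, your approach is more self-contained---openness of the reduced-fiber locus (tag 0C0E) is itself a nontrivial result requiring flatness and properness, whereas vanishing of $\Omega_{X/S}$ is immediate from Nakayama and base change for differentials. On the other hand, the paper's proof is two lines once one is willing to cite the Stacks Project, and it cleanly separates the two obstructions (positive dimension, nonreducedness). Either works for the application.
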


\begin{proof}
Since $\bk$ is algebraically, the locus where $X_s$ is \'etale is the intersection of the locus where $X_s$ is dimension $<1$ and the locus where $X_s$ is reduced.  These loci are open by~\cite[0D4I and 0C0E]{stacks}.
\end{proof}

\begin{remark}
We believe the ideal invariant $I \mapsto \# \fU_I(\bk)$ is degreewise bounded. It is cone-stable, but not upper semi-continuous. 
By contrast, we could also consider the scheme structure on $\fU_I$, setting $\tau(I)=\deg \fU_I$ when $\fU_I$ is finite and $\tau(I)=\infty$ else.  However, the invariant $\tau$ is upper semi-continuous, but not cone-stable.
\end{remark}

\subsection{Invariants of singularities}

Our ideal invariants are integer-valued. However, there are interesting ideal invariants that are not integer-valued. The methods used in the proof of Theorem~\ref{thm:invariants} can sometimes be applied to these invariants. Here is an example:

\begin{proposition}
Suppose $\bk$ is a field of characteristic~$0$, and fix $\bd$. Let $\Lambda$ be the set of rational numbers occurring as the log-canonical threshold $($at the cone point$)$ of some type $\bd$ ideal. Then $\Lambda$ satisfies the descending chain condition. 
\end{proposition}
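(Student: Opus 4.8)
The plan is to mimic the proof strategy for Theorem~\ref{thm:invariants}, replacing the integer-valued invariant by the log-canonical threshold, which takes values in $\bQ_{>0} \cup \{\infty\}$. The log-canonical threshold at the cone point is cone-stable: adjoining a dummy variable $x_{n+1}$ replaces the ideal $I \subset A_n$ by $IA_{n+1}$, and $\operatorname{lct}_0(IA_{n+1}) = \operatorname{lct}_0(I)$ since the singularity is essentially unchanged (the new variable contributes a smooth factor). Hence $\operatorname{lct}_0$ descends to a function on each $Y_{\bd,n}$ compatibly, and therefore to a function on $Y_{\bd}$. The second key input is a semicontinuity statement: for a flat family of ideals $\cI$ on $\cO_S \otimes A_n$ with $\cO_S$-flat quotient, the function $s \mapsto \operatorname{lct}_0(\cI_s)$ is lower semicontinuous on the geometric points of $S$ — this is a standard fact from the theory of multiplier ideals and log-canonical thresholds in characteristic~$0$ (see e.g.\ the work of Ein--Lazarsfeld--Musta\c{t}\u{a}--Nakayama--Popa, or Lazarsfeld's book). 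Concretely, for each rational $c$, the locus $\{s : \operatorname{lct}_0(\cI_s) \le c\}$ is Zariski-closed.

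With these two facts in hand, the argument runs parallel to \S\ref{sec:cone stable}. For each rational number $c$, let $W_c \subset Y_{\bd}$ be the locus where $\operatorname{lct}_0 \le c$. I would first restrict to a stratum $Y^{\lambda}_{\bd}$ of the flattening stratification of Theorem~\ref{thm:strat}: on this stratum the universal quotient $\cB_{\bd,n}$ is flat over $X^{\lambda}_{\bd,n}$ (Proposition~\ref{prop:flatstrat}), so by the semicontinuity statement the preimage $\pi_n^{-1}(W_c \cap Y^{\lambda}_{\bd,n})$ is closed in $X^{\lambda,\circ}_{\bd,n}$ for every $n$, whence $W_c \cap Y^{\lambda}_{\bd}$ is closed in $Y^{\lambda}_{\bd}$. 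Since $Y^{\lambda}_{\bd}$ is noetherian (Theorem~\ref{thm:noeth}), any strictly descending chain of such closed subsets is finite; and since there are only finitely many strata $\lambda$, any strictly descending chain of the $W_c$ in $Y_{\bd}$ is finite. Now suppose for contradiction that $\Lambda$ fails the DCC, so there is a strictly decreasing sequence $c_1 > c_2 > \cdots$ of elements of $\Lambda$, each realized as $\operatorname{lct}_0(I_k)$ for some type $\bd$ ideal $I_k$ in some $A_{n_k}$. Then $W_{c_1} \supsetneq W_{c_2} \supsetneq \cdots$ is a strictly descending chain (strictness because $W_{c_k}$ contains the class of $I_k$ but $W_{c_{k+1}}$ does not, as $\operatorname{lct}_0(I_k) = c_k > c_{k+1}$), contradicting the previous paragraph.

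The main obstacle is establishing the semicontinuity of $\operatorname{lct}_0$ in the families that arise here — in particular making sure it applies with $A_n$ a fixed polynomial ring and $S$ an arbitrary finite-type $\bk$-scheme, and that the fibers are honest (not just generically nice). This is genuinely a characteristic-$0$ input and is the reason for the hypothesis; it is where resolution of singularities (or the analytic/$D$-module description of the lct) enters. A secondary subtlety is that $\operatorname{lct}_0$ is $+\infty$ exactly when the ideal is trivial at the cone point (e.g.\ $I$ generated by units, or $V(I)$ smooth there after removing the cone), so one should either note that the $\infty$-locus is closed and harmless or simply observe that DCC is a statement about the rational values in $\Lambda$ and the value $\infty$ plays the role of the top element. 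One should also double-check cone-stability carefully: the log-canonical threshold of $I + (x_{n+1}) \cdot 0 = IA_{n+1}$ at the origin equals $\operatorname{lct}_0(I)$ because, writing $\bA^{n+1} = \bA^n \times \bA^1$, any log resolution of $(\bA^n, V(I))$ pulls back to a log resolution of $(\bA^{n+1}, V(IA_{n+1}))$ with the same discrepancy and multiplicity data along the exceptional divisors meeting the fiber over the origin.
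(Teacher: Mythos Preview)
Your argument is correct, but it is more elaborate than necessary, and the paper's proof takes a shorter path. The paper does \emph{not} invoke the flattening stratification of Theorem~\ref{thm:strat} at all: it observes that the semicontinuity of the log-canonical threshold (Lazarsfeld, Example~9.5.41) holds for the universal family $\cI_{\bd,n}$ on all of $X_{\bd,n}$, with no flatness hypothesis on the quotient, so the locus $Z_c = \{\ff : \operatorname{lct}_0(I_\ff) \le c\}$ is already closed in $X_{\bd,n}^{\circ}$ for every $n$. Since $Z_c$ is $\GL$-stable, it is closed in $X_{\bd}^{\circ}$ (equivalently, its image is closed in $Y_{\bd}$), and a strictly decreasing chain in $\Lambda$ yields a strictly decreasing chain of $\GL$-stable closed sets, contradicting Theorem~\ref{thm:noeth} directly. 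In other words, in the terminology of \S\ref{ss:improve}, $-\operatorname{lct}_0$ behaves like a \emph{strongly} upper semi-continuous invariant, so one needs only Draisma's noetherianity and not the Ananyan--Hochster input underlying Theorem~\ref{thm:strat}. Your route through the strata is valid---and is exactly the template of \S\ref{sec:cone stable}---but it buys you nothing here and costs you an extra dependence on Stillman's conjecture.
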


\begin{proof}
Cone-stability follows from the definition; for instance, using the analytic definition (as in~\cite[Theorem~1.2]{mustata-impanga}), a rational function $f$ is integrable around the origin of $\bA^{n}$ if and only if the corresponding function is integrable around the origin of $\bA^{n+1}$.  Lower semi-continuity follows from \cite[Example~9.5.41]{lazarsfeld}. For each $\lambda \in \Lambda$ and each $\ff\in X_{\bd, n}$ we say that $\ff$ lies in $Z_{\lambda}$ if and only if the log canonical threshold of  $(f_1,\dots,f_r)$ at the cone point in $\bA^n$ is at most $\lambda$.  Since $Z$ is $\GL$-invariant, and since $Z_{\lambda}\cap X_{\bd,n}$ is closed by semicontinuity, it follows that $Z_{\lambda}$ is closed.  An infinite decreasing chain of values in $\Lambda$ would thus yield an infinite decreasing chain of $\GL$-stable closed subsets in $X_{\bd}$, contradicting Theorem~\ref{thm:noeth}.
\end{proof}

\begin{remark}
A similar statement holds for $F$-pure thresholds in positive characteristic. Here the semi-continuity follows from \cite[Theorem~5.1]{mustata-yoshida}.
\end{remark}

\subsection{Previously known degreewise bounded invariants}

Many ideal invariants have been previously shown to be degreewise bounded. We catalogue some here to hint at the ubiquity of the phenomenon.

\begin{proposition}
The following invariants of an ideal $I$ are known to be degreewise bounded by previous results in the literature:
\begin{enumerate}[\rm (1)]
    \item\label{list:bezout}  The degree of $I$.
	\item\label{list:max codim}  The maximal codimension of a minimal or associated prime of $I$.
	\item\label{list:pdim}  The projective dimension of $I$.
	\item \label{list:reg}  The Castelnuovo--Mumford regularity of $I$.
	\item\label{list:betti}  The Betti number $\beta_{i,j}(I)$ for any $i,j$.
	\item\label{list:minimal sums} The sum of the degrees of: the minimal primes of $I$ or the minimal primary components of $I$. 
	\item  The $r$th arithmetic degree of $I$, as defined in \cite[Definition 3.4]{bayer-mumford}.\footnote{Since the embedded primary components of an ideal are not uniquely defined, it would not make sense to talk about the degree of an embedded primary component.  To remedy this, \cite[\S3]{bayer-mumford} introduces a notion of the multiplicity of an embedded component of $I$ that depends only on the ideal $I$ and the corresponding associated prime.  This leads to their definition of the $r$th arithmetic degree of an ideal.}
	\item\label{list:num primes}  The number of: minimal primes, embedded primes, or associated primes of $I$.
	\item\label{list:radical} The degree of $\sqrt{I}$.
	\item\label{list:symbolic}  The minimal $B$ such that the symbolic power $I^{(Br)}$ belongs to $I^r$ for all $r$.
	\item\label{list:null}  The minimal $B$ such that $\sqrt{I}^B\subseteq I$.
	\item\label{list:numgens}  The largest degree of a generator, or the number of generators, for: any associated prime of $I$, the radical $\sqrt{I}$, or the symbolic power $I^{(r)}$ for any integer $r$.
\end{enumerate}
\end{proposition}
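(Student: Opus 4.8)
The plan is to go through the twelve items one by one, cite the result that establishes each, and supply the short reduction in the cases where the quoted result is not verbatim the statement; since the proposition only asserts that the bounds are \emph{known}, the substance lies in checking that each quoted bound is genuinely independent of the number of variables. Logically there are two basic inputs. The first is item~\ref{list:pdim}, the boundedness of projective dimension in terms of $\bd$, which is exactly Stillman's conjecture, proved by Ananyan--Hochster \cite[Theorem~C]{ananyan-hochster}. The second is item~\ref{list:reg}: given a bound on $\pdim(A_n/I)$ depending only on $\bd$, Caviglia's theorem \cite[Theorem~2.4]{ms} bounds $\reg(I)$ by a quantity depending only on that projective-dimension bound and on $\bd$ --- exactly as recorded in the proof of Theorem~\ref{thm:ginfin}. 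The remaining ten items are then deduced from these two together with standard commutative algebra.

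Items~\ref{list:max codim} and~\ref{list:betti} are homological consequences. For item~\ref{list:max codim}: if $P$ is an associated prime of $I$ then $\operatorname{depth}_{A_P}(A_P/I_P)=0$, so the Auslander--Buchsbaum formula in the local ring $A_P$ gives $\operatorname{codim}(P)=\pdim_{A_P}(A_P/I_P)\le \pdim_A(A/I)$, which is bounded by item~\ref{list:pdim}; minimal primes are the subcase in which $A_P/I_P$ additionally has finite length. For item~\ref{list:betti}: the projective-dimension bound limits the homological index $i$ and the regularity bound limits the internal degree $j$ for which $\beta_{i,j}(I)$ can be nonzero, while boundedness of the value $\dim_{\bk}\operatorname{Tor}_i^A(A/I,\bk)_j$ itself follows by reducing $I$ to an ideal generated by boundedly many forms in boundedly many variables --- a structural output of the work on Stillman's conjecture \cite{ananyan-hochster,ess-stillman} --- after which the minimal free resolution, hence each Betti number, is computed in that bounded polynomial ring. (The same reduction reproves items~\ref{list:pdim} and~\ref{list:reg}, and all three are also instances of Theorem~\ref{thm:invariants}, since the Betti numbers $\beta_{i,j}$ are cone-stable and upper semi-continuous.)

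Items~\ref{list:bezout}, \ref{list:minimal sums}, the $r$th arithmetic degree, and~\ref{list:num primes} all fall under the theory of arithmetic degree of Bayer--Mumford \cite{bayer-mumford}: the degree of $A/I$ obeys a B\'ezout-type inequality in the generator degrees; the $r$th arithmetic degree is bounded in terms of $\reg(I)$, hence by item~\ref{list:reg}, and it dominates both the sum of the degrees of the minimal primes and the sum of the degrees of the minimal primary components in item~\ref{list:minimal sums}; and since every associated prime --- in particular every embedded or minimal prime --- contributes a positive summand to an arithmetic-degree count, item~\ref{list:num primes} follows as well. Item~\ref{list:radical} is then immediate, as $\sqrt{I}$ has the same minimal primes as $I$, so its degree is the sum of the degrees of the top-dimensional minimal primes of $I$.

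Finally, items~\ref{list:symbolic}, \ref{list:null} and~\ref{list:numgens} concern symbolic powers, radicals, and associated primes. Item~\ref{list:symbolic} follows from the uniform symbolic-power containment theorem of Ein--Lazarsfeld--Smith and Hochster--Huneke, giving $I^{(hr)}\subseteq I^r$ where $h$ is the big height of $I$, together with the bound on $h$ from item~\ref{list:max codim}. For items~\ref{list:null} and~\ref{list:numgens}, the cleanest route is again the reduction to a polynomial ring in boundedly many variables from which $I$ is extended \cite{ananyan-hochster,ess-stillman}: radicals, associated primes, symbolic powers, and the minimal numbers and degrees of generators of all of these are controlled in that bounded ring, of which there are only finitely many, so the quantities in question are bounded in terms of $\bd$ alone. \textbf{The main obstacle} is not conceptual but is precisely this bookkeeping. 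One must confirm that each bound pulled from the literature --- Caviglia's regularity bound, the Bayer--Mumford arithmetic-degree bounds, the uniform symbolic-power bound, and the effective bounds behind items~\ref{list:null} and~\ref{list:numgens} --- is, or can be restated as, a function of $\bd$ (equivalently of the constants $B$ and $C$ of Theorem~\ref{thm:ginfin}) with no dependence on the ambient dimension $n$.
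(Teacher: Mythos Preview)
Your outline is essentially correct and would establish all twelve items, but it diverges from the paper's proof in several places, and the divergences are worth noting because the proposition is explicitly a catalogue of \emph{previously known} results rather than a fresh derivation from Stillman's conjecture.

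The paper treats several items by direct citation of classical theorems that predate Stillman's conjecture entirely: item~(\ref{list:max codim}) by Krull's principal ideal theorem (an ideal with $r$ generators has all associated primes of height $\le r$), items~(\ref{list:bezout}) and~(\ref{list:minimal sums}) by Fulton's refined B\'ezout theorem, and item~(\ref{list:null}) by the effective Nullstellensatz of Brownawell, Koll\'ar, and Sombra. You instead route (\ref{list:max codim}) through Auslander--Buchsbaum and the Stillman bound, (\ref{list:minimal sums}) through the Bayer--Mumford arithmetic degree (hence through the regularity bound, hence again through Stillman), and (\ref{list:null}) through the reduction to a bounded polynomial subring. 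These arguments are valid, but they invoke a much heavier hammer than necessary and misrepresent the historical status of these bounds; the paper's point is precisely that (\ref{list:max codim}), (\ref{list:bezout}), (\ref{list:minimal sums}), and (\ref{list:null}) do \emph{not} need Stillman's conjecture. For item~(\ref{list:betti}) the paper uses Boij--S\"oderberg theory to conclude finiteness of the set of possible Betti tables once projective dimension and regularity are bounded, whereas you use the reduction to a bounded ambient ring; both are fine, but again the paper's argument is closer to a literature citation. For item~(\ref{list:numgens}) the paper cites Ananyan--Hochster's Theorem~D(b) for associated primes directly, then handles $\sqrt{I}$ and $I^{(r)}$ by an explicit intersection argument bounding regularity and projective dimension of $J\cap J'$; your reduction-to-bounded-ring argument is a correct alternative but less specific. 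In short: your plan works, but the paper's proof is a list of sharper citations, and your version obscures which bounds are genuinely classical and which genuinely require \cite{ananyan-hochster}.
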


\begin{proof}
\begin{enumerate}[label=(\arabic*),leftmargin=*]
\item Refined Bezout's Theorem~\cite[Theorem~12.3]{fulton}.
\end{enumerate}
\begin{enumerate}[label=(\arabic*),resume,topsep=0pt]
\item The principal ideal theorem~\cite[Theorem~10.2]{eisenbud}.  
\item Stillman's conjecture~\cite[Theorem~C]{ananyan-hochster}.
\item Stillman's conjecture combined with Caviglia's theorem~\cite[Theorem~2.4]{ms}.  
\item We can combine (3) and (4) with Boij--S\"oderberg theory \cite[Theorems 0.1, 0.2]{eisenbud-schreyer} to see that only finitely many Betti tables for $\beta(S/I)$ are possible, and the statement follows.
\item For either sum, one can apply Refined Bezout's Theorem~\cite[Theorem~12.3]{fulton}.
\item  We use \cite[Proposition~3.6]{bayer-mumford}.
\item The number of minimal primes is bounded by (6), so it suffices to bound the number of embedded primes.  By (2), we know that the codimension of an embedded prime of $I$ can take on only finitely many distinct values, so it suffices to bound the number of embedded primes of a given codimension. Since each embedded prime of codimension $n-r$ contributes at least $1$ to the $r$th arithmetic degree of $I$, we can then apply (7).
\item Follows from (6).
\item Follows from (2) plus  
\cite[Theorem 1.1(c)]{hochster-huneke}.
\item Follows from the Effective Nullstellensatz~\cites{brownawell,kollar,sombra}.  
\item For an associated prime $P$ of $I$, or for a minimal primary component $Q$ of $I$, this follows from \cite[Theorem~D(b)]{ananyan-hochster}.  Since $\sqrt{I}$ is the intersection of the minimal primes of $I$, it suffices to show that we can bound the number and degree of defining equations of $J\cap J'$ in terms of the number and degree of defining equations of $J$ and $J'$.  Using parts (3) and (4) above, we can bound the regularity and projective dimension of $J, J'$ and $J+J'$.  Using the exact sequence relating these ideals to $J\cap J'$, we can bound the regularity and projective dimension of $J\cap J'$ as well.  There are thus only a finite number of possible revlex gins of $J\cap J'$, yielding the desired bounds.  This proves the statement for $\sqrt{I}$; a similar argument works for symbolic powers. \qedhere
\end{enumerate}
\end{proof}
\begin{remark}
Combining parts \ref{list:numgens} and \ref{list:reg} answers ~\cite[Problem 3.9]{peeva-stillman}.  See also \cites{cd,ravi} for related work on the degreewise boundedness of taking radicals.
\end{remark}

\section{Additional comments} \label{s:comments}

\subsection{A converse theorem}

Before Draisma's paper \cite{draisma} appeared, we had Theorem~\ref{thm:invariants} in the form ``if sums of symmetric power functors are topologically noetherian then ideal invariants are bounded.'' 
The converse of this statement, that boundedness of ideal invariants implies the noetherianity of of the space $Y_{\bd}$, follows from an equivariant version of the Hilbert basis theorem that appears in \cite[\S1.2]{eqhilb}.
 
\subsection{An improvement to Theorem~\ref{thm:noeth}} \label{ss:improve}

Let $Y_{\le d}$ be the set of all isomorphism classes of finitely generated homogeneous ideals of $A=\bk[x_1,x_2,\ldots]$ that are generated in degrees at most $d$ (but with no condition on the number of generators). We have a surjective map 
\[
X_{\le d} = \bigoplus_{i=1}^d \left( \Sym^i(\bk^\infty) \otimes \bk^\infty \right) \to Y_{\le d}
\]
defined by sending $\sum_{i=1}^d \sum_j f_{i,j} \otimes r_{i,j}$ to the ideal generated by the $f_{i,j}$. Alternatively, we may view an element in $X_{\le d}$ as a finite rank map $\bk^\infty \to \bigoplus_{i=1}^d \Sym^i(\bk^\infty)$ and the ideal is generated by the image. We can topologize $Y_{\le d}$ as a quotient space of $X_{\le d}$. The following statement greatly strengthens Theorem~\ref{thm:noeth}:

\begin{theorem*}[Theorem~\ref{thm:noeth 2}]
The space $Y_{\le d}$ is noetherian.
\end{theorem*}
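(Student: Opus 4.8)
The plan is to run the proof of Theorem~\ref{thm:noeth} again, but with respect to a better-chosen group action. Recall that $Y_{\le d}$ carries the quotient topology for the surjection $\pi\colon X_{\le d}^{\circ}\to Y_{\le d}$ (on closed points, exactly as for $Y_{\bd}$), where $X_{\le d}=\bigoplus_{i=1}^{d}\Sym^{i}(\bk^{\infty})\otimes\bk^{\infty}$. So it suffices to find a group $G$ acting on $X_{\le d}^{\circ}$ for which $\pi$ is invariant and $X_{\le d}^{\circ}$ is topologically $G$-noetherian: then any descending chain $Z_{\bullet}$ of closed subsets of $Y_{\le d}$ pulls back to a descending chain of $G$-stable closed subsets of $X_{\le d}^{\circ}$, which stabilizes, forcing $Z_{\bullet}$ to stabilize. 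I want to stress up front why this is not a formality: the obvious candidate, $\GL$ acting on the variable factor $\bk^{\infty}$ alone, does \emph{not} work, since with respect to it $X_{\le d}$ is an infinite direct sum of copies of $\bigoplus_{i}\Sym^{i}(\bk^{\infty})$ and, for instance, the subspaces $\bigoplus_{i}\Sym^{i}(\bk^{\infty})\otimes\langle e_{k},e_{k+1},\dots\rangle$ form a non-stabilizing descending chain of $\GL$-stable closed subsets. Thus the machinery of \S\ref{s:polyvar} cannot be applied to the obvious action, and the whole point of the argument is to exhibit one to which it \emph{can} be applied.

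The right group, I claim, is the \emph{diagonal} copy of $\GL$, acting on the $i$th summand of $X_{\le d}$ by $\Sym^{i}(g)\otimes g$. The map $\pi$ is invariant for it: acting on the index factor $\bk^{\infty}$ of each summand merely changes the generating set and so does not change the ideal at all, while acting on the variable factor replaces the ideal by an isomorphic one. And with respect to the diagonal action, $X_{\le d}$ is exactly the ind-scheme $\varinjlim_{n}\ul P(\bk^{n})$ attached to the polynomial functor $\ul P=\bigoplus_{i=1}^{d}(\Sym^{i}\otimes T^{1})$, where $T^{1}$ denotes the identity functor (the diagonal colimit is cofinal in the double colimit in the two copies of $\bk^{\infty}$). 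Since $\Sym^{i}\otimes T^{1}$ is a quotient of the $(i+1)$st tensor power functor, it is a finite-length polynomial functor, and hence so is $\ul P$. So $X_{\le d}$ is the scheme $X^{*}$ of Example~\ref{ex:poly} with $\ul V=\ul P$, and Corollary~\ref{cor:conjtop} — which rests on Draisma's theorem — gives that $X_{\le d}$, and in particular $X_{\le d}^{\circ}$, is topologically $\GL$-noetherian for the diagonal action.

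Once this is in place the proof concludes exactly as for Theorem~\ref{thm:noeth}: a descending chain $Z_{\bullet}$ of closed subsets of $Y_{\le d}$ pulls back to a descending chain $\pi^{-1}(Z_{\bullet})$ of closed, diagonal-$\GL$-stable subsets of $X_{\le d}^{\circ}$, which stabilizes by Corollary~\ref{cor:conjtop}, and surjectivity of $\pi$ forces $Z_{\bullet}$ to stabilize. The only part that takes any thought is the one above, and I expect it to be the main obstacle: recognizing that, although the naive action is hopeless, passing to the diagonal $\GL$ simultaneously preserves $\pi$ and presents $X_{\le d}$ as coming from a \emph{finite-length} polynomial functor, so that all of \S\ref{s:polyvar} becomes available. (Consistently with this, the chains witnessing the failure of noetherianity for the naive action are not stable under the diagonal $\GL$, so nothing is contradicted.)
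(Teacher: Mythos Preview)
Your proof is correct and follows essentially the same approach as the paper: both arguments use the diagonal $\GL$-action on $X_{\le d}$, observe that $\Sym^{i}\otimes T^{1}$ is a finite-length polynomial functor so that Draisma's theorem (via \S\ref{s:polyvar}) applies, and then pass to the quotient $Y_{\le d}$. Your version adds useful motivation (the failure of the naive action) and routes through Corollary~\ref{cor:conjtop} rather than citing Draisma directly, but the key idea is the same.
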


\begin{proof}
The space $X_{\le d}$ carries a natural action of $\GL \times \GL$, and the map $X_{\le d} \to Y_{\le d}$ is equivariant if we have this group act trivially on $Y_{\le d}$. Consider the diagonal action of $\GL$. The tensor product $\Sym^i(\bk^\infty) \otimes \bk^\infty$ is a finite length representation for each $i$. So by \cite{draisma}, $X_{\le d}$ is topologically $\GL$-noetherian.  
  Since the map $X_{\le d} \to Y_{\le d}$ is a quotient map, $Y_{\le d}$ is also noetherian (since $\GL$ acts trivially on it).
\end{proof}

We give two consequences of the theorem.

\begin{proposition}
Let $\cH$ be the set of all polynomials of the form $\HN_{A/I}(t)$ with $I$ a homogeneous ideal finitely generated in degrees at most $d$. Endow $\cH$ with the partial order $\le$ from \S \ref{ss:order}. Then $\cH$ satisfies the ascending chain condition.
\end{proposition}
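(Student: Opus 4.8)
The plan is to adapt the flattening-stratification argument of \S\ref{ss:strat} to the space $Y_{\le d}$, using Theorem~\ref{thm:noeth 2} in place of Theorem~\ref{thm:noeth}. First I would reformulate the ascending chain condition as a closedness statement. For a polynomial $H \in \bR[t]$, set $Z_H = \{ I \in Y_{\le d} \mid \HN_{A/I}(t) \ge H \}$, the locus where the Hilbert numerator dominates $H$ in the partial order $\le$ of \S\ref{ss:order}. An ascending chain $H_1 \le H_2 \le \cdots$ in $\cH$ gives, by transitivity of $\le$, a descending chain $Z_{H_1} \supseteq Z_{H_2} \supseteq \cdots$. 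Granting that each $Z_{H_k}$ is closed, Theorem~\ref{thm:noeth 2} forces this chain to stabilize, say at stage $N$; choosing an ideal $I_N$ with $\HN_{A/I_N} = H_N$ we get $I_N \in Z_{H_N} = Z_{H_k}$, hence $H_N \ge H_k$ for all $k \ge N$. Together with $H_N \le H_k$ and the antisymmetry of $\le$ (a nonzero polynomial cannot be both positive and negative throughout $(0,1)$), this gives $H_N = H_k$ for $k \ge N$. So everything reduces to showing that $Z_H$ is closed in $Y_{\le d}$ for every $H$.

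To prove $Z_H$ closed, I would pull it back along the quotient map $X_{\le d} \to Y_{\le d}$. Its preimage is $\GL$-stable, since the Hilbert numerator is an isomorphism invariant, so by the comparison results of \S\ref{s:polyvar} — which already underlie the proof of Theorem~\ref{thm:noeth 2} — it is closed in $X_{\le d}$ if and only if its intersection with each finite level $X_{\le d,n}$ is Zariski closed there. It therefore suffices to show that $S_n := \{ \ff \in X_{\le d,n} \mid \HN_{A/I_{\ff}}(t) \ge H \}$ is closed in $X_{\le d,n}$ for every $n$, where $I_\ff \subseteq A_n$ is the ideal generated by the components of $\ff$ and $\HN_{A/I_\ff}$ is, by cone-stability, computed in $A_n$.

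The essential input is the following finiteness, playing the role that Theorem~\ref{thm:hilbfin} played in \S\ref{ss:strat}: for each fixed $n$, only finitely many Hilbert numerators $\HN_{A/I}$ arise from ideals $I \subseteq A_n$ generated in degrees $\le d$. I would deduce this from the classical effective bounds on Castelnuovo--Mumford regularity: there is a constant $\rho(n,d)$ with $\reg(A_n/I) \le \rho(n,d)$ for every such $I$ (see \cite{bayer-mumford}), so the Hilbert function of $A_n/I$, and hence $\HN_{A/I}$, is pinned down by its finitely many, uniformly bounded, values in degrees $\le \rho(n,d)$. Note that, in contrast with Theorem~\ref{thm:ginfin}, this uses no input as strong as Stillman's conjecture; for $n$ fixed the crude bound suffices, which is why $\cH$ itself is genuinely infinite. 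Granting the finiteness, $S_n$ is a finite union $\bigcup_P \{ \ff \mid \HN_{A/I_\ff} = P \}$ over the achievable Hilbert numerators $P$ with $P \ge H$; for each such $P$, the usual upper semi-continuity of Hilbert functions shows that the closure of $\{ \ff \mid \HN_{A/I_\ff} = P \}$ is contained in $\{ \ff \mid \rH_{A_n/I_\ff}(t) \succeq (1-t)^{-n}P(t) \}$, and Proposition~\ref{prop:order} (implication (d) $\Rightarrow$ (a), applied to $\HN_{A/I_\ff} - P$ after dividing by $(1-t)^n$) shows that this latter set is contained in $\{ \ff \mid \HN_{A/I_\ff} \ge P \} \subseteq S_n$. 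Hence $\overline{S_n} = S_n$.

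The step I expect to be the main obstacle is the finiteness input in the third paragraph: locating the cleanest reference for boundedness of regularity purely in terms of the number of variables and the generating degrees, and — conceptually — being clear on why, unlike Theorem~\ref{thm:hilbfin}, this assertion is genuinely ``level-by-level'' and so does not require Stillman's conjecture. Everything else is a direct transcription of the arguments in \S\ref{ss:strat}.
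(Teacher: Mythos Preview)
Your argument is correct. The paper states this proposition without proof, as an immediate consequence of Theorem~\ref{thm:noeth 2}; your proposal supplies exactly the natural argument, modeled on Proposition~\ref{prop:strat}, and you have correctly isolated the one extra ingredient needed in this setting---level-by-level finiteness of Hilbert numerators via the classical regularity bound in a fixed number of variables, in place of the global finiteness of Theorem~\ref{thm:hilbfin}.
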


We say that an ideal invariant is {\bf strongly upper semi-continuous} if for any family of ideals $\cI$ over $S$ (with no flatness condition imposed), the function $s \mapsto \nu(\cI_s)$ is upper semi-continuous. (Here $\cI_s$ denotes the ideal at $s$, which is a homomorphic image of the fiber.) We say that $\nu$ is {\bf strongly degreewise bounded} if for every $d$ there exists a $B$ such that $\nu(I) \le B$ for any homogeneous ideal $I$ finitely generated in degrees at most $d$.  By a straightforward adaptation of the proof of Theorem~\ref{thm:invariants}, we also obtain:

\begin{proposition} \label{prop:invar2}
Any cone-stable strongly upper semi-continuous ideal invariant is strongly degreewise bounded.
\end{proposition}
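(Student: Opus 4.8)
The plan is to adapt the proof of Theorem~\ref{thm:invariants} almost verbatim, with two simplifications coming from the strengthened hypotheses. Fix a cone-stable strongly upper semi-continuous ideal invariant $\nu$ and fix $d$. By cone-stability, $\nu$ descends to a function on $Y_{\le d}$ (and compatibly on each $Y_{\le d, n}$, where $Y_{\le d,n}$ parametrizes ideals in $A_n$ generated in degrees $\le d$). For each integer $k$, let $Z_k = \{ I \in Y_{\le d} \mid \nu(I) \ge k \}$. These form a descending chain of subsets of $Y_{\le d}$, and it suffices to show this chain stabilizes: if $Z_N = Z_{N+1} = \cdots$, then $\nu \ge N$ forces $\nu \ge k$ for all $k$, i.e.\ $\nu = \infty$, contradicting that $\nu$ takes integer values; hence $\nu < N$ everywhere, giving the bound $B = N-1$.

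The key point, and the place where strong upper semi-continuity replaces the flattening-stratification argument, is to check that each $Z_k$ is closed in $Y_{\le d}$. By definition of the quotient topology, it is enough to show that $\pi^{-1}(Z_k) = \{ x \in X_{\le d} \mid \nu(I_x) \ge k \}$ is closed in $X_{\le d}$, and since $X_{\le d}$ carries the ind-topology, it suffices to check this on each finite-dimensional piece $X_{\le d, n}$. Over $X_{\le d,n}$ there is a tautological family of ideals $\cI$ of $A_n \otimes \cO_{X_{\le d,n}}$: a point $x$ corresponds to a tuple of polynomials $f_{i,j}$ in $A_n$, and $\cI_x = I_{x}$ is the ideal they generate. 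No flatness is available here — this is exactly why we need the \emph{strong} form of upper semi-continuity, which applies to an arbitrary family with no flatness hypothesis. Strong upper semi-continuity then says $x \mapsto \nu(\cI_x)$ is upper semi-continuous on $X_{\le d, n}$, so $\{x \in X_{\le d,n} \mid \nu(\cI_x) \ge k\}$ is closed, as desired.

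Finally, by Theorem~\ref{thm:noeth 2} the space $Y_{\le d}$ is noetherian, so the descending chain $Z_\bullet$ of closed subsets stabilizes, completing the argument. Unlike the proof of Theorem~\ref{thm:invariants}, there is no need to pass to strata $Y_{\le d}^\lambda$ or to invoke Theorem~\ref{thm:strat} (which indeed fails here); the role previously played by flatness on each stratum is subsumed into the stronger semi-continuity hypothesis, and the role previously played by Corollary~\ref{cor:conjtop} together with the stratification is replaced by the single noetherianity statement Theorem~\ref{thm:noeth 2}. The only mild subtlety to be careful about is the bookkeeping that $\nu$ really does descend to $Y_{\le d}$ and is compatible with the maps $Y_{\le d, n} \to Y_{\le d,n+1}$, which is precisely cone-stability; this is routine and parallels the remark following Theorem~\ref{thm:noeth}. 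I do not expect any genuine obstacle: the content is entirely in Theorem~\ref{thm:noeth 2} and in the definition of strong upper semi-continuity, both of which are in hand.
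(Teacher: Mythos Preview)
Your adaptation is exactly what the paper intends: it merely says the result follows ``by a straightforward adaptation of the proof of Theorem~\ref{thm:invariants}'', and your proof spells this out correctly, replacing $Y_{\bd}$ by $Y_{\le d}$, Theorem~\ref{thm:noeth} by Theorem~\ref{thm:noeth 2}, and the flattening stratification by a direct appeal to strong upper semi-continuity. One small slip: $\nu$ takes values in $\bZ \cup \{\infty\}$, so stabilization of $Z_\bullet$ yields $\nu < N$ \emph{or} $\nu = \infty$ (exactly as in \S\ref{sec:cone stable}), not a contradiction with integrality.
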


\begin{remark}
The analogue of Theorem~\ref{thm:strat} fails for $Y_{\le d}$: even $Y_{\le 1}$ would need a separate stratum for each integer $c$ consisting of the isomorphism class of the ideal $\langle x_1, x_2,\dots,x_c\rangle$.
\end{remark}
\begin{remark}
Let $Y$ be the set of isomorphism classes of all finitely generated ideals in $A$. We claim that $Y$ is not noetherian. Indeed, if it were then the set of all polynomials of the form $\HN_{A/I}$, with $I$ any finitely generated homogeneous ideal, would satisfy the ascending chain condition. But it does not: indeed, $1-t^d$ is a Hilbert numerator for any $d \ge 0$, and these form an ascending chain.
\end{remark}

\subsection{Boundedness of tca ideal invariants}

Draisma's theorem states that if $V$ is any finite length polynomial representation of $\GL$ then $\Spec(\Sym(V))$ is topologically noetherian. In our proof of Theorem~\ref{thm:noeth}, we only applied this result with $V$ being a finite sum of symmetric powers. It is natural to wonder, therefore, if the remaining cases of Draisma's theorem have implications for ideal invariants. We now give one possible answer to this question.

Recall that a {\bf twisted commutative algebra} (tca) over $\bk$ is a commutative associative unital graded $\bk$-algebra $A=\bigoplus_{k \ge 0} A_k$ equipped with an action of the symmetric group $S_k$ on $A_k$ satisfying certain conditions, the most important being that multiplication is commutative up to a ``twist'' by $S_k$; see \cite{expos} for the full definition. Let $A_n=\bk\langle x_1, \ldots, x_n \rangle$ be the tca freely generated by $n$ indeterminates of degree one. Explicitly, $A_n$ is the tensor algebra on $\bk^n$, equipped with the natural action of $S_k$ on the $k$th tensor power.

We define a {\bf tca ideal invariant} to be a rule associating to every ideal $I$ in any $A_n$ a quantity $\nu(I) \in \bZ \cup \{\infty\}$, depending only on $(A_n, I)$ up to isomorphism. One can then prove:

\begin{theorem} \label{thm:tca}
Any cone-stable strongly upper semi-continuous tca ideal invariant is degreewise bounded.
\end{theorem}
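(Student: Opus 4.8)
The plan is to rerun the proof of Theorem~\ref{thm:invariants} with tensor powers in place of symmetric powers, and to exploit strong (rather than flat-relative) upper semi-continuity so as to bypass the flattening stratification of Theorem~\ref{thm:strat} entirely; this is exactly the simplification already used to pass from Theorem~\ref{thm:noeth 2} to Proposition~\ref{prop:invar2}.

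Fix a degree tuple $\bd=(d_1,\dots,d_r)$. Since the degree-$d$ piece of the free tca $A_n=\bk\langle x_1,\dots,x_n\rangle$ is the tensor power $(\bk^n)^{\otimes d}$, the $r$-tuples $\ff=(f_1,\dots,f_r)$ with $f_i$ a degree-$d_i$ element of $A_n$ --- i.e.\ the generating sets of type $\bd$ tca ideals --- are parametrized by the affine space $X_{\bd,n}=\bigoplus_{i=1}^r (\bk^n)^{\otimes d_i}$. First I would set up the ind-scheme $X_\bd=\varinjlim_n X_{\bd,n}$ together with the $\GL$-invariant surjection $\pi\colon X_\bd^\circ\to Y_\bd$ onto the set $Y_\bd$ of isomorphism classes of type $\bd$ tca ideals, topologized so that $Z\subseteq Y_\bd$ is closed exactly when $\pi^{-1}(Z)$ is, verbatim as in \S\ref{s:thms}. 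The only structural input needed is that $X_\bd$ fits the framework of \S\ref{s:polyvar}: indeed $X_\bd$ is the space $X^*$ of Example~\ref{ex:poly} attached to the polynomial functor $\ul V=T^{d_1}\oplus\cdots\oplus T^{d_r}$, a finite direct sum of tensor-power functors and hence of finite length. Consequently Corollary~\ref{cor:conjtop} applies, $X_\bd^\circ$ is topologically $\GL$-noetherian, and, arguing exactly as in the proof of Theorem~\ref{thm:noeth}, $Y_\bd$ is noetherian; moreover cone-stability of $\nu$ makes it compatible with the transition maps $Y_{\bd,n}\to Y_{\bd,n+1}$, so $\nu$ descends to a function on $Y_\bd$.

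Next I would produce the tautological family of tca ideals over $X_{\bd,n}$: there is a universal tuple of global sections of $(\bk^n)^{\otimes d_i}\otimes\cO_{X_{\bd,n}}$, and these generate an ideal sheaf $\cI_{\bd,n}$ in $\cA_{\bd,n}=A_n\otimes_\bk\cO_{X_{\bd,n}}$ whose fiber at a point $\ff$ is the ideal $I_{\ff,n}\subseteq A_n$. This family need not be flat, but that is irrelevant: strong upper semi-continuity of $\nu$ applies to arbitrary families of ideals and shows that, for each $n$ and each $k$, the locus $\{\ff\in X_{\bd,n}^\circ : \nu(I_{\ff,n})\ge k\}$ is Zariski closed in $X_{\bd,n}^\circ$. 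Pulling these loci through $\pi$ and invoking Corollary~\ref{cor:X*ind} (ind-closed $=$ Zariski closed for $\GL$-stable sets), the subset $Z_k=\{[I]\in Y_\bd : \nu(I)\ge k\}$ is closed in $Y_\bd$. The $Z_k$ form a descending chain of closed subsets of a noetherian space, hence $Z_k=Z_N$ for all $k\ge N$; so $\nu(I)\ge N$ forces $\nu(I)=\infty$, which means $\nu$ is bounded in degree $\bd$. As $\bd$ is arbitrary, $\nu$ is degreewise bounded.

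The only step that is more than a routine transcription of earlier arguments is the construction of the universal family $\cI_{\bd,n}$ and the check that it is a legitimate input to strong upper semi-continuity: in contrast to the polynomial case, the ambient ring $A_n$ is infinitely generated as a graded object, so one must verify that assembling the construction one graded piece at a time --- each graded piece being a finite free $\cO_{X_{\bd,n}}$-module --- produces an honest family of ideals over $X_{\bd,n}$ with the correct fibers. I expect this to be the main, though fairly mild, obstacle; everything else (the applicability of Draisma's theorem, immediate since tensor powers are finite-length polynomial functors, and the noetherianity of $Y_\bd$) carries over word for word from \S\ref{s:polyvar} and \S\ref{s:thms}.
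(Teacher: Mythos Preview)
Your proposal is correct and follows essentially the same route as the paper: define $Y_\bd$ as the $\GL$-quotient of $X_\bd=\varinjlim_n\bigoplus_i(\bk^n)^{\otimes d_i}$, invoke Draisma's theorem via Corollary~\ref{cor:conjtop} (now for tensor-power functors rather than symmetric powers) to obtain noetherianity of $Y_\bd$, and then use strong upper semi-continuity directly to make the loci $Z_k$ closed, bypassing any flattening stratification. The paper's sketch makes exactly these moves, and explicitly flags that the absence of a tca analog of Theorem~\ref{thm:strat} is the reason for the hypothesis of \emph{strong} upper semi-continuity; your remark that this is the same simplification underlying Proposition~\ref{prop:invar2} is apt.
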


The proof is similar to before. We define a space $Y_{\bd}$ parametrizing ideals of type $\bd$ in $A=\bk\langle x_1, x_2, \ldots \rangle$. Using Draisma's theorem (now applied to arbitrary polynomial functors), we deduce that $Y_{\bd}$ is noetherian. We no longer have the analog of Theorem~\ref{thm:strat} (this is an interesting open problem), which is why we restrict to strongly upper semi-continuous invariants in the theorem: this ensures that the locus $Z_n \subset Y_{\bd}$ where $\nu \ge n$ is closed.

The projective dimension of any non-trivial ideal in the tca $A_n$ is infinite. However, it is known \cite{symu1,ganli} that the regularity of any ideal is finite. One can therefore formulate the following tca analog of Stillman's conjecture:

\begin{question}[TCA Stillman]
Is the tca ideal invariant ``regularity'' degreewise bounded?
\end{question}

Regularity is only weakly semi-continuous, so Theorem~\ref{thm:tca} does not apply to this question. We do not know if a positive answer to this question would imply a version of Theorem~\ref{thm:strat} in this setting, as the tools used in the proof of Theorem~\ref{thm:ginfin} do not yet exist for tca's.

\subsection{Degreewise bounded invariants of modules}\label{subsec:fin modules}

Fix a doubly indexed sequence $\bd=(d_{1,1},d_{1,2},\dots,d_{1,b},d_{2,1},\dots,d_{a,b})$ in $\bZ^{a \times b}$.  We say that a graded module is {\bf type $\bd$} if it admits a presentation matrix
\[
\begin{pmatrix}
f_{1,1}&f_{1,2}&\cdots&f_{1,b}\\
f_{2,1}&f_{2,2}&\cdots&f_{2,b}\\
\vdots & &\ddots& \vdots\\
f_{a,1}&f_{a,2}&\cdots  &f_{a,b}
\end{pmatrix}
\]
where $\deg(f_{i,j})=d_{i,j}$ for all $i,j$. A {\bf module invariant} is a rule that associates to every graded module $M$ over any standard-graded polynomial ring $A$ a quantity in $\bZ\cup \{\infty\}$ that only depends on the pair $(A, M)$ up to graded isomorphism. We extend the notions of cone-stability, weak upper semi-continuity, and degreewise boundedness in the natural way.

\begin{theorem}\label{thm:inv modules}
Any module invariant that is cone-stable and weakly upper semi-continuous is degreewise bounded.
\end{theorem}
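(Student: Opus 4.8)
The plan is to reduce Theorem~\ref{thm:inv modules} to Theorem~\ref{thm:invariants} by a standard device: encoding a graded module via an ideal in a larger polynomial ring, in a way that is compatible with cone-stability and weak upper semi-continuity. First I would set up the parameter space: given the doubly-indexed degree tuple $\bd \in \bZ^{a\times b}$, a type $\bd$ module $M$ over $A_n=\bk[x_1,\dots,x_n]$ is the cokernel of a map $\bigoplus_{j=1}^b A_n(-e_j) \to \bigoplus_{i=1}^a A_n(-c_i)$ whose matrix entries $f_{i,j}$ have prescribed degrees $d_{i,j}$; here the shifts $c_i, e_j$ are determined by $\bd$ up to an overall additive constant, which does not affect module invariants up to graded isomorphism. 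So the space of type $\bd$ presentation matrices is a finite direct sum of spaces $\Sym^{d_{i,j}}(\bk^n)$, exactly the kind of ind-scheme $X_{\bd,n}$ studied in \S\ref{s:thms}, and everything from \S\ref{s:polyvar} applies verbatim.

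The key step is the encoding. Given a presentation matrix $\ff=(f_{i,j})$ over $A_n$, I would adjoin $a$ new variables $y_1,\dots,y_a$ and consider the ring $A_n[y_1,\dots,y_a]$ with a suitable grading on the $y_i$ (giving $y_i$ degree $c_i$, or after normalizing, arranging the $d_{i,j}$ and a constant so that all generators are homogeneous), and form the ideal $J_{\ff} = \langle \sum_{i=1}^a f_{i,j} y_i : 1 \le j \le b \rangle$ generated by the $b$ ``linear in $y$'' forms coming from the columns of the presentation matrix. This $J_{\ff}$ is a type $\bd'$ ideal for a degree tuple $\bd'$ depending only on $\bd$. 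The point of this construction — which is the standard ``Rees-like'' or ``idealization'' trick — is that the pair $(A_n[y], J_{\ff})$ determines, up to isomorphism, the module $M_{\ff} = \operatorname{coker}(\ff)$: the module structure can be recovered from the ideal because the $y_i$ record the generators of the target free module and the relations $\sum f_{i,j} y_i$ record the columns of the presentation. One must check that two presentation matrices give isomorphic pairs $(A_n[y], J_{\ff})$ if and only if the corresponding modules are graded-isomorphic; this amounts to matching up change-of-coordinates on the $x$'s and $y$'s with the automorphisms identifying the two presentations, plus handling the ambiguity in the choice of presentation matrix of a fixed module.

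Given a cone-stable weakly upper semi-continuous module invariant $\nu$, I would then define an ideal invariant $\tilde\nu$ on the class of type $\bd'$ ideals by $\tilde\nu(J) = \nu(M)$ when $J = J_{\ff}$ arises from a type $\bd$ presentation and $M = \operatorname{coker}(\ff)$, and $\tilde\nu(J) = -\infty$ (or any fixed constant) otherwise — more carefully, one extends $\tilde\nu$ to \emph{all} type $\bd'$ ideals, but since we only need degreewise boundedness of $\nu$, it suffices to bound $\tilde\nu$ on the sublocus of ideals of the special form $J_{\ff}$, and that sublocus is $\GL$-stable and closed in $X_{\bd',n}$, so the argument in \S\ref{sec:cone stable} goes through on it directly. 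Concretely, I would rerun the proof of Theorem~\ref{thm:invariants}: the locus $\{\ff \in X_{\bd,n} : \nu(M_{\ff}) \ge k\}$ pulls back the locus $\{J : \tilde\nu(J) \ge k\}$ under $\ff \mapsto J_{\ff}$; weak upper semi-continuity of $\nu$ gives closedness on each flattening stratum of $X_{\bd,n}$ for the family of modules $\operatorname{coker}(\ff)$ (the flattening stratification for $\cB_{\bd,n}$ is replaced by the flattening stratification of the universal cokernel module, which exists by the same semi-continuity of Hilbert functions, and there are finitely many strata because type $\bd$ modules have boundedly many Hilbert numerators — itself a consequence of Theorem~\ref{thm:hilbfin} applied to $J_{\ff}$), and then noetherianity of $Y_{\bd}$ (Theorem~\ref{thm:noeth}, equivalently Corollary~\ref{cor:conjtop}) forces the descending chain to stabilize.

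The main obstacle I anticipate is bookkeeping in the encoding step: making the identification ``graded-isomorphism of modules $\iff$ isomorphism of the associated ideal-pairs'' precise, including the grading normalization so that all the forms $\sum f_{i,j} y_i$ are homogeneous of the same degree, and verifying that cone-stability of $\nu$ (adjoining an $x$-variable to $A_n$) corresponds correctly to cone-stability of $\tilde\nu$ and that weak upper semi-continuity is preserved — in particular that flatness of a family of modules matches flatness of the corresponding family of quotient rings $A_n[y]/J_{\ff}$, which requires noting that $A_n[y]/J_{\ff}$ is flat precisely when the presentation cokernel has locally constant Hilbert function, with the extra $y$-variables contributing a fixed polynomial factor. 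None of this is deep, but it is the part where a slick proof can hide a subtle error, so I would state the encoding as an explicit lemma and prove the equivalence of isomorphism classes carefully before invoking the machinery of \S\ref{s:thms}. Alternatively, and perhaps more cleanly, one can avoid encoding altogether and simply redo \S\ref{s:thms} with ``ideal'' replaced by ``module'': the space $X_{\bd,n}$ of presentation matrices, the quotient $Y_{\bd}$ by graded isomorphism, noetherianity from \S\ref{s:polyvar}, finiteness of Hilbert numerators of type $\bd$ modules, and the flattening stratification of the universal cokernel — every ingredient transfers mutatis mutandis, and this is likely the route the authors take.
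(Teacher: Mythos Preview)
Your final paragraph has it exactly right: the paper simply says ``The proof is similar to that of Theorem~\ref{thm:invariants}, so we omit the details,'' i.e., one redoes \S\ref{s:thms} with presentation matrices in place of generating tuples, the universal cokernel module in place of $\cB_{\bd,n}$, and Hilbert numerators of modules in place of those of quotient rings; every ingredient (noetherianity of the parameter space via Corollary~\ref{cor:conjtop}, finiteness of Hilbert numerators via the module version of Stillman's conjecture, the flattening stratification) transfers directly.

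Your primary approach via the encoding $\ff \mapsto J_\ff \subset A_n[y_1,\dots,y_a]$ is a legitimate alternative, and indeed this encoding is the standard device for deducing the module form of Stillman's conjecture from the ideal form --- which you implicitly rely on when you derive finiteness of module Hilbert numerators from Theorem~\ref{thm:hilbfin}. But as you correctly anticipate, the bookkeeping is the whole difficulty: an isomorphism of pairs $(A_n[y], J_\ff) \cong (A_n[y], J_{\bg})$ need not respect the $x$/$y$ partition of the variables, so recovering the module isomorphism class from the ideal isomorphism class is not automatic, and extending $\tilde\nu$ to a well-defined ideal invariant on \emph{all} ideals (rather than just a function on the $\GL$-stable locus of special $J_\ff$'s) requires care. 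None of this is insurmountable, but the direct ``mutatis mutandis'' route avoids it entirely, which is why the paper takes that route.
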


The proof is similar to that of Theorem~\ref{thm:invariants}, so we omit the details. Similarly, analogues of the results in \S\ref{ss:improve} can be obtained.

\begin{bibdiv}
\begin{biblist}

\bib{ananyan-hochster}{article}{
   author={Ananyan, Tigran},
   author={Hochster, Melvin},
   title={Small subalgebras of polynomial rings and Stillman's conjecture},
date={2016},
note={\arxiv{1610.09268v1}},
}

\bib{anderson}{misc}{
  author       = {Anderson, Dave},
  title        = {Line bundles on Ind Schemes},
  year         = {2011},
  note = {\url{http://mathoverflow.net/questions/75888/line-bundles-on-ind-schemes}},
}

\bib{bayer-mumford}{article}{
   author={Bayer, Dave},
   author={Mumford, David},
   title={What can be computed in algebraic geometry?},
   conference={
      title={Computational algebraic geometry and commutative algebra
      (Cortona, 1991)},
   },
   book={
      series={Sympos. Math., XXXIV},
      publisher={Cambridge Univ. Press, Cambridge},
   },
   date={1993},
   pages={1--48},
note = {\arxiv{alg-geom/9304003v1}},
}

\bib{bayer-stillman}{article}{
   author={Bayer, David},
   author={Stillman, Michael},
   title={A criterion for detecting $m$-regularity},
   journal={Invent. Math.},
   volume={87},
   date={1987},
   number={1},
   pages={1--11},
}
	
\bib{brownawell}{article}{
   author={Brownawell, W. Dale},
   title={Bounds for the degrees in the Nullstellensatz},
   journal={Ann. of Math. (2)},
   volume={126},
   date={1987},
   number={3},
   pages={577--591},
}
	
\bib{cd}{article}{
   author={Chardin, Marc},
   author={D'Cruz, Clare},
   title={Castelnuovo-Mumford regularity: examples of curves and surfaces},
   journal={J. Algebra},
   volume={270},
   date={2003},
   number={1},
   pages={347--360},
}

\bib{draisma}{article}{
	author = {Draisma, Jan},
	title = {Topological noetherianity for polynomial functors},
date={2017},
	note = {\arxiv{1705.01419v1}},
}

\bib{eggsnow}{article}{
	AUTHOR = {Eggermont, Robert},
	AUTHOR = {Snowden, Andrew},
     TITLE = {Topological noetherianity for algebraic representations of infinite rank classical groups},
     date = {2017},
     note = {\arxiv{1708.06420}},
     }

\bib{eisenbud}{book}{
   author={Eisenbud, David},
   title={Commutative algebra with a view toward algebraic geometry},
   series={Graduate Texts in Mathematics},
   volume={150},
   publisher={Springer-Verlag, New York},
   date={1995},
   pages={xvi+785},
}

\bib{eisenbud-schreyer}{article}{
   author={Eisenbud, David},
   author={Schreyer, Frank-Olaf},
   title={Betti numbers of graded modules and cohomology of vector bundles},
   journal={J. Amer. Math. Soc.},
   volume={22},
   date={2009},
   number={3},
   pages={859--888},
note={\arxiv{0712.1843v3}}
}

\bib{eqhilb}{article}{
   author={Erman, Daniel},
   author={Sam, Steven~V},
   author={Snowden, Andrew},
   title={An equivariant Hilbert basis theorem},
   note={\arxiv{1712.07532v1}}
}

\bib{ess-stillman}{article}{
   author={Erman, Daniel},
   author={Sam, Steven~V},
   author={Snowden, Andrew},
   title={Big polynomial rings and Stillman's conjecture},
   note={\arxiv{1801.09852v2}}
}

  \bib{fulton}{book}{
   author={Fulton, William},
   title={Intersection theory},
   series={Ergebnisse der Mathematik und ihrer Grenzgebiete. 3. Folge. A
   Series of Modern Surveys in Mathematics [Results in Mathematics and
   Related Areas. 3rd Series. A Series of Modern Surveys in Mathematics]},
   volume={2},
   edition={2},
   publisher={Springer-Verlag, Berlin},
   date={1998},
   pages={xiv+470},
}

\bib{ganli}{article}{
   author={Gan, Wee Liang},
   author={Li, Liping},
   title={Finiteness of regularity of $\FI_d$-modules},
   date={2017},
   note={\arxiv{1705.00882v1}}
}

\bib{hochster-huneke}{article}{
   author={Hochster, Melvin},
   author={Huneke, Craig},
   title={Comparison of symbolic and ordinary powers of ideals},
   journal={Invent. Math.},
   volume={147},
   date={2002},
   number={2},
   pages={349--369},
}

\bib{kollar}{article}{
   author={Koll{\'a}r, J{\'a}nos},
   title={Sharp effective Nullstellensatz},
   journal={J. Amer. Math. Soc.},
   volume={1},
   date={1988},
   number={4},
   pages={963--975},
   note={\arxiv{math/9805091v1}},
}

\bib{kreuzer-robbiano}{book}{
   author={Kreuzer, Martin},
   author={Robbiano, Lorenzo},
   title={Computational commutative algebra. 1},
   publisher={Springer-Verlag, Berlin},
   date={2000},
   pages={x+321},
}

\bib{lazarsfeld}{book}{
   author={Lazarsfeld, Robert},
   title={Positivity in algebraic geometry. II},
   series={Ergebnisse der Mathematik und ihrer Grenzgebiete. 3. Folge. A
   Series of Modern Surveys in Mathematics [Results in Mathematics and
   Related Areas. 3rd Series. A Series of Modern Surveys in Mathematics]},
   volume={49},
   publisher={Springer-Verlag, Berlin},
   date={2004},
   pages={xviii+385},
}

\bib{ms}{article}{
   author={McCullough, Jason},
   author={Seceleanu, Alexandra},
   title={Bounding projective dimension},
   conference={
      title={Commutative algebra},
   },
   book={
      publisher={Springer, New York},
   },
   date={2013},
   pages={551--576},
}

\bib{miller-sturmfels}{book}{
   author={Miller, Ezra},
   author={Sturmfels, Bernd},
   title={Combinatorial commutative algebra},
   series={Graduate Texts in Mathematics},
   volume={227},
   publisher={Springer-Verlag, New York},
   date={2005},
}

\bib{milne}{article}{
	author={Milne, J.S.},
	title = {Algebraic Geometry},
	note = {\url{http://www.jmilne.org/math/CourseNotes/AG.pdf}}
}

\bib{mustata-impanga}{article}{
   author={Musta\c t\u a, Mircea},
   title={IMPANGA lecture notes on log canonical thresholds},
   note={Notes by Tomasz Szemberg},
   conference={
      title={Contributions to algebraic geometry},
   },
   book={
      series={EMS Ser. Congr. Rep.},
      publisher={Eur. Math. Soc., Z\"urich},
   },
   date={2012},
   pages={407--442},
}	

\bib{mustata-yoshida}{article}{
   author={Musta{\c{t}}{\u{a}}, Mircea},
   author={Yoshida, Ken-Ichi},
   title={Test ideals vs. multiplier ideals},
   journal={Nagoya Math. J.},
   volume={193},
   date={2009},
   pages={111--128},
   note = {\arxiv{0706.1124v3}}
}

\bib{peeva-stillman}{article}{
   author={Peeva, Irena},
   author={Stillman, Mike},
   title={Open problems on syzygies and Hilbert functions},
   journal={J. Commut. Algebra},
   volume={1},
   date={2009},
   number={1},
   pages={159--195},
}

\bib{polyaszego}{book}{
   author={P\'olya, George},
   author={Szeg\H o, Gabor},
   title={Problems and theorems in analysis. II},
   series={Classics in Mathematics},
   note={Theory of functions, zeros, polynomials, determinants, number
   theory, geometry;
   Translated from the German by C. E. Billigheimer;
   Reprint of the 1976 English translation},
   publisher={Springer-Verlag, Berlin},
   date={1998},
}

\bib{ravi}{article}{
   author={Ravi, M. S.},
   title={Regularity of ideals and their radicals},
   journal={Manuscripta Math.},
   volume={68},
   date={1990},
   number={1},
   pages={77--87},
}

\bib{expos}{article}{
author={Sam, Steven V},
author={Snowden, Andrew},
title={Introduction to twisted commutative algebras},
date={2012},
note={\arxiv{1209.5122v1}},
}

\bib{symu1}{article}{
   author={Sam, Steven V},
   author={Snowden, Andrew},
   title={GL-equivariant modules over polynomial rings in infinitely many variables II},
   date={2017},
   note={\arxiv{1703.04516v1}},
}

\bib{sombra}{article}{
   author={Sombra, Mart{\'{\i}}n},
   title={A sparse effective Nullstellensatz},
   journal={Adv. in Appl. Math.},
   volume={22},
   date={1999},
   number={2},
   pages={271--295},
}

\bib{snellman}{article}{
   author={Snellman, Jan},
   title={Gr\"obner bases and normal forms in a subring of the power series
   ring on countably many variables},
   journal={J. Symbolic Comput.},
   volume={25},
   date={1998},
   number={3},
   pages={315--328},
}

\bib{stacks}{misc}{
label={Stacks},
  author       = {The {Stacks Project Authors}},
  title        = {Stacks Project},
  year         = {2017},
  note = {\url{http://stacks.math.columbia.edu}},
}

\end{biblist}
\end{bibdiv}
\end{document}